\pgfplotsset{compat=1.18}
\newcommand{\tope}[1]{\mathrm{h}_{\mathrm{top}}(#1)}
\newtheorem{definition}{Definition}
\newtheorem{theorem}{Theorem}
\newtheorem{lemma}{Lemma}
\newtheorem{corollary}{Corollary}
\newtheorem{example}{Example}
\newtheorem{remark}{Remark}
\newtheorem{proposition}{Proposition}
\newtheorem*{customtheorem}{Theorem A} %creat a costumized theorem. Fist name is how you call it, second name is how it will be displayed.
\title{\textbf{BARCODE ENTROPY AND WRAPPED FLOER HOMOLOGY}}
\author{Rafael Fernandes}
\address{Department of Mathematics, UC Santa Cruz, Santa
  Cruz, CA 95064, USA} \email{rfernan9@ucsc.edu}
\begin{document}

\maketitle
\begin{abstract}
    In this paper, we explore the interplay between barcode and topological entropies. Wrapped Floer homology barcode entropy is the exponential growth of not-to-short bars in the persistence module associated with the filtered wrapped Floer homology. We prove that the barcode entropy is an invariant of the contact boundary of a Liouville domain, i.e., does not depend on the filling, and it bounds from below the topological entropy of the Reeb flow. We make no assumptions on the first Chern class of the Liouville domain or the contact form. 
\end{abstract}
\tableofcontents

\section{Introduction and main results}

\subsection{Introduction}
The aim of this paper it to further explore the relations between the Floer homology and the dynamics of the underline system. Our investigation centers on the interplay between barcode and topological entropies. More precisely, we define barcode entropy of the wrapped Floer homology and prove that it is bounded from above by the topological entropy of the underlining Reeb flow. 

The barcode entropy of a persistence module is the exponential growth of the number of not-too-short bars. This number is an invariant of the persistence module. Within the framework of Floer theory, different flavors of Floer homology have a natural filtration given by the action and can be viewed as a persistence module, thus having an associated barcode. The barcode entropy turns out to be relevant in this case. In \cite{cineli2021topological}, barcode entropy was introduced in the case of a sequence of iterates of a compactly supported Hamiltonian diffeomorphism $\varphi:M \rightarrow M$, denoted by $\hbar(\varphi)$. It was shown there that the barcode entropy cannot exceed the topological entropy, i.e., $\hbar(\varphi) \leq \tope{\varphi}$ (Theorem A). Furthermore, in the presence of a compact hyperbolic invariant set $K$ for $\varphi$, we have $\hbar(\varphi|_K) \leq \tope{\varphi}$ (Theorem B). These results imply in particular that this invariant is finite and non-trivial. As a consequence of the results of Katok in \cite{katok1980lyapunov}, it was also proved in \cite{cineli2021topological}  that $\hbar(\varphi) = \tope{\varphi}$ (Corollary C) when $M$ is a surface (this equality does not hold in general in higher dimension, as shown by Çineli in \cite{cineli2023generalized}). For the case of geodesic flows, similar results were obtained in \cite{ginzburg2022barcode} (Theorems A,B, and Corollary C in dimension 3). For the case of Reeb flows, the barcode entropy $\hbar(\alpha)$ of a contact form $\alpha$ on a strongly fillable manifold was introduced in \cite{fender2023barcode}, where it was shown that, under certain conditions the entropy does not depend on the filling, and that $\hbar(\alpha) \leq \tope{\varphi^t_{\alpha}} = \tope{\alpha}$, where $\varphi^t_{\alpha}$ is the Reeb flow of $\alpha$. Recently, a contact version of Theorem B was established for Reeb flow in \cite{cineli2024barcode}. Namely, $\tope{\varphi^t_{\alpha}|_K}\leq \hbar(\alpha)$, for a hyperbolic invariant set $K$. What we do on this paper can be thought as the next natural follow up question after this series of works. We define the barcode entropy of the persistence module given by the wrapped Floer homology of a Liouville domain together with two exact asymptotically conical Lagrangians, denoted by $\hbar(M,L_0 \rightarrow L_1)$, and prove a version of theorem A, i.e., that $\hbar(M,L_0 \rightarrow L_1) \leq \tope{\alpha}$; see Section \ref{mainresults} for the definitions.

Connections between topological entropy of geodesic/Reeb flows and global topological/geometric properties of the phase/configuration space is a well studied subject. See for example \cite{dinaburg1971relations,katok1982entropy,paternain2012geodesic,alves2016cylindrical,frauenfelder2006fiberwise, macarini2011positive,meiwes2018rabinowitz}, and references therein. What distinguishes the approach based on barcode entropy is that it is not tied up to global topological features. For example, the barcode entropy can be positive independently of the growth of $\pi_1(M)$ or $H_*(\Lambda)$, e.g., when $M$ is a sphere or torus and the underling flow admit a hyperbolic set with positive topological entropy.

\begin{comment}
    Indeed, barcode entropy can capture local changes on the system while the prior methods cannot. In \cite{meiwes2018rabinowitz}, was established that $\Gamma^{symp}(M,L)\leq h_{top}(\alpha)$ (proposition 7.6), where $\Gamma^{symp}(M,L)$ is the exponential growth of the filtered wrapped Floer homology for $(M,L)$, where $L$ is assymptotically conical, and $\alpha$ is the contact form on the boundary $\partial M$ (in the language of barcodes, this the exponential growth of the number here (theorem A for wrapped Floer homology case, to be stated on the next subsection) improves this inequality by proving a lower bound to the topological entropy that is sharper than the symplectic growth and depends on the dynamics of the Reeb flow. 

\end{comment}

\subsection{Main results} \label{mainresults}
Let $(M,\lambda)$ be a Liouville domain, and denote by $\alpha=\lambda|_{\Sigma}$ the restriction of $\lambda$ to the boundary $\Sigma = \partial M$. Consider a pair of exact asymptotically conical Lagrangians $L_0$ and $L_1$.

We denote the filtered wrapped Floer homology of $(M,L_0 \rightarrow L_1)$ on the interval $(-\infty,t)$, for $t>0$, by $HW^t(M,L_0 \rightarrow L_1)$. We point out that the grading does not play a role in what follows, and therefore we make no assumptions on the first Chern class $c_1(TW)$. We may suppress the Lagrangians from the notation for simplicity when they are well understood.

The family of vector spaces $\{V_a = HW^a(M)\}$, where $V_a = \{0\}$ for $a\leq 0$, together with the "inclusion" maps $HW^a(M) \rightarrow HW^b(M)$, for $a<b$, induced by the inclusion map of the complexes form a persistence module, and we denote its barcode by $B(M,L_0 \rightarrow L_1)$, or shortly, $B(H)$. For $\epsilon >0$, we denote by $\text{\textcrb}_{\epsilon}(M,L_0 \rightarrow L_1,t)$, or shortly $\text{\textcrb}_{\epsilon}(M,t)$ the number of bars in $B(M)$ with length greater or equal to $\epsilon$ with left end point at most $t$. This is a increasing function of $t$ and $ 1/\epsilon$ and locally constant as a function of $t$ on the complement of $S(\alpha,\Lambda_0 \rightarrow \Lambda_1) \cup \{0\}$, where $S(\alpha,\Lambda_0 \rightarrow \Lambda_1)$ is the set of lengths of Reeb chords connecting $\Lambda_0 = \partial L_0$ and $\Lambda_1 = \partial L_1$. The main invariant studied at this paper is defined as follows. (Here $\log^+(x) = \log(x)$, for $x > 0$, and $\log^+(0)=0$.)

\begin{definition}
      The wrapped Floer homology \textbf{$\epsilon$-barcode entropy} of $M$, for $\epsilon>0$, is defined by 
    $$\hbar^{HW}_{\epsilon}(M,L_0 \rightarrow L_1) =\limsup_{t \rightarrow \infty} \frac{\log^{+}(\text{\textcrb}_{\epsilon}(M,L_0 \rightarrow L_1,t))}{t},$$
    and the wrapped Floer homology \textbf{barcode entropy} of $M$ is
    $$\hbar^{HW}(M,L_0 \rightarrow L_1) =\lim_{\epsilon \rightarrow 0} \hbar^{HW}_{\epsilon}(M,L_0 \rightarrow L_1).$$
\end{definition}
The barcode entropy measures the exponential growth of the number of not-too-short bars. We notice that this definition makes sense in a broader setting, i.e., for any persistence module in the sense of Section \ref{barcodes}, we can define its barcode entropy.
 
The main theorems of this paper can be thought as the natural step after the series of papers \cite{cineli2021topological,ginzburg2022barcode,meiwes2018rabinowitz,fender2023barcode}, on the quest to relate filtered Floer type homology and topological entropy of the underlining dynamical system. To state it precisely, we denote $\tope{\alpha}$ the topological entropy of the Reeb flow $\varphi^t_{\alpha}$, where $R_{\alpha}$ is the Reeb vector field for $\alpha$. 

For $(\Sigma,\alpha, \Lambda_0 \rightarrow \Lambda_1)$, where $(\Sigma,\alpha)$ is a compact contact manifold, and $\Lambda_0$ and $\Lambda_1$ are Legendrian submanifolds, we say that $(M,\lambda,L_0 \rightarrow L_1)$, is a filling, where $(M,\lambda)$ is a Liouville domain, and $L_0$ and $L_1$ connected exact asymptotically conical Lagrangians if $\partial M = \Sigma$, $\lambda|_{\Sigma} = \alpha$, and $\Lambda_0 = \partial L_0$ and $\Lambda_1 = \partial L_1$, with $L_0 \pitchfork L_1$.

\begin{theorem}\label{thm:independence}
    Let $(M,\lambda,L_0 \rightarrow L_1)$ and $(N,\eta, K_0 \rightarrow K_1)$ two fillings for $(\Lambda,\alpha,\Lambda_0 \rightarrow \Lambda_1)$, then 
    \begin{equation} \label{equationindependence}
        \hbar^{HW}(M,L_0 \rightarrow L_1) = \hbar^{HW}(N,K_0 \rightarrow K_1).
    \end{equation}
\end{theorem}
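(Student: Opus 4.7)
The plan is to establish Theorem~\ref{thm:independence} by constructing Viterbo-style transfer maps between the two wrapped Floer complexes, showing that they induce a bounded interleaving of the associated persistence modules, and then extracting equality of barcode entropies.

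The first step is to exploit that the completions $\widehat M$ and $\widehat N$ share a common cylindrical end $(\Sigma \times [1,\infty), d(e^r\alpha))$, with the Lagrangians $L_i, K_i$ agreeing there with $\Lambda_i \times [1, \infty)$. After a compactly supported Liouville isotopy, one embeds $M$ as a Liouville subdomain of $\widehat N$ so that $L_i$ maps to a Lagrangian Hamiltonian-isotopic to $K_i$, and symmetrically for $N \hookrightarrow \widehat M$. Viterbo transfer adapted to wrapped Floer homology with asymptotically conical Lagrangian boundary then provides filtered chain maps
\[
\Phi\colon CW^{*}(M, L_0 \rightarrow L_1) \to CW^{*}(N, K_0 \rightarrow K_1), \qquad \Psi\colon CW^{*}(N, K_0 \rightarrow K_1) \to CW^{*}(M, L_0 \rightarrow L_1),
\]
each shifting the action filtration by at most a constant $D = D(M, N)$ determined by the difference of the Liouville primitives in a common coordinate system. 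A standard homotopy-of-Hamiltonians argument shows that $\Psi \circ \Phi$ and $\Phi \circ \Psi$ are chain-homotopic to the $2D$-shifted identity, yielding a $D$-interleaving of the two persistence modules. Bottleneck stability then gives $\hbar^{HW}_{\epsilon}(M, L_0 \rightarrow L_1) = \hbar^{HW}_{\epsilon}(N, K_0 \rightarrow K_1)$ for every $\epsilon > 2D$, since a uniform bounded shift in bar endpoints does not affect the $\limsup_{t\to\infty}$ in the definition.

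The hard part will be passing to the limit $\epsilon \to 0$ from the equality above. I plan to address this by refining the comparison at the level of generators. The Reeb chord generators of $CW^{*}$ are common to the two complexes (they depend only on $\Sigma, \Lambda_0, \Lambda_1$), while the filling-dependent generators come from the finitely many transverse intersections $L_0 \cap L_1$ and $K_0 \cap K_1$. Short bars of length in $(0, 2D]$ arise from pairs of Reeb-chord generators cobounded by Floer strips of energy at most $2D$; by the integrated maximum principle such strips are confined to a neighborhood of $\Sigma$ in the completion whose size depends only on $D$, hence their count is determined by the germ of the Liouville structure near $\partial M$, which is isomorphic for $M$ and $N$. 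Consequently the short-bar counts in the two barcodes differ by at most a bounded constant, coming from the finite intersection contribution, which is invisible to $\limsup_{t\to\infty} \log^+(\text{\textcrb}_{\epsilon}(t))/t$. Combining this with the equality for $\epsilon > 2D$ yields $\hbar^{HW}_{\epsilon}(M, L_0 \rightarrow L_1) = \hbar^{HW}_{\epsilon}(N, K_0 \rightarrow K_1)$ for every $\epsilon > 0$, and hence equation~\eqref{equationindependence} upon taking $\epsilon \to 0$.
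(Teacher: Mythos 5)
There is a genuine gap, and it sits in the very first step. Your transfer-map construction presupposes that, after a compactly supported isotopy, $M$ embeds as a Liouville subdomain of $\widehat N$ (and vice versa) with $L_i$ carried to Lagrangians Hamiltonian isotopic to $K_i$. Two fillings of the same contact data $(\Sigma,\alpha,\Lambda_0\rightarrow\Lambda_1)$ are in general not related by any such embedding -- they can be topologically unrelated -- so there is no Viterbo functoriality to invoke. More fundamentally, no finite interleaving of the two persistence modules can exist: the total wrapped Floer homologies $HW(M,L_0\rightarrow L_1)$ and $HW(N,K_0\rightarrow K_1)$ genuinely depend on the filling (one can vanish while the other is infinite dimensional), so the long and infinite bars of the two barcodes differ wildly and the bottleneck distance is typically infinite. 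Hence the claim that $\Psi\circ\Phi$ and $\Phi\circ\Psi$ are homotopic to shifted identities, and the resulting ``equality for $\epsilon>2D$'', cannot hold; this is precisely why the paper does not argue via interleaving at all. In addition, your confinement argument for short bars is justified by the wrong tool: the (integrated) maximum principle only prevents strips from escaping into the convex end, it does not prevent them from dipping into the filling. What is needed is the energy quantization statement of Lemma \ref{lemmaenergyofcurvesboundedbybelow}: any holomorphic curve with boundary on the Lagrangians crossing the collar $\Sigma\times[\tfrac12,1]$ has energy at least a geometric constant $C=C(J)$, so the relevant threshold for ``short'' is $C$, not an interleaving constant.

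The paper's actual route is purely a counting argument built on that quantization. One takes the same (semi-)admissible Hamiltonian, constant on both fillings and supported on the common cylindrical end, with almost complex structures agreeing on $\Sigma\times[\tfrac12,\infty)$; then the positive-action generators (Reeb chords) are common to both complexes, and by Lemma \ref{lemmaenergyofcurvesboundedbybelow} together with Lemma \ref{lemmacomparingbarcode} the multiset of bars with positive left endpoint and length $<C$ is \emph{identical} for the two fillings. Writing the total generator count up to action $t$ as $|L_0\cap L_1|$ plus the filling-independent number of Reeb chords, and subtracting the filling-independent short-bar counts, one finds that the number of not-too-short bars is determined up to a factor of $2$ and the bounded additive term $|L_0\cap L_1|$, neither of which affects the exponential growth rate; combining with the perturbation estimate of Proposition \ref{propostiondistances} and Proposition \ref{sequecialbarcodeentropy} then gives \eqref{equationindependence}. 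Your second paragraph is close in spirit to this mechanism, but without the counting identity (and with the interleaving step removed) the comparison of the $\epsilon$-entropies for all $\epsilon>0$ does not go through as written.
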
 

We then define $\hbar^{HW}(\alpha,\Lambda_0 \rightarrow \Lambda_1) =\hbar^{HW}(M,L_0 \rightarrow L_1)$, where $(M,\lambda, L_0 \rightarrow L_1)$ is any filling of $(\Lambda,\alpha,\Lambda_0 \rightarrow \Lambda_1)$.
\begin{remark}
    We point out that in the case of symplectic homology, the barcode entropy of compact contact strongly fillable manifold $(\Sigma,\alpha)$ does not depend on the filling at least when $c_1(TM) = 0$; see \cite[section 4.2]{fender2023barcode}. We tend to believe that the condition on the first Chern class of $TM$ is unnecessary. 
\end{remark}

\begin{customtheorem} \label{theoremA} Let $(M,\lambda)$ be a Liouville domain, and $L_0$, and $L_1$ be connected exact asymptotically conical Lagrangians in $M$. Then
\begin{align} 
    \label{equationentropies}
    \hbar^{HW}(M,L_0 \rightarrow L_1) \leq \tope{\alpha}. 
\end{align}
\end{customtheorem}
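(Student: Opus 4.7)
My plan follows the strategy of \cite{cineli2021topological}, \cite{ginzburg2022barcode}, and \cite{fender2023barcode}, adapted to the wrapped Floer setting. The idea is to bound the number of $\epsilon$-significant bars by the number of Reeb chords of bounded length from $\Lambda_0$ to $\Lambda_1$, and then bound the exponential growth of this chord count by $\tope{\alpha}$ via a Yomdin-type volume-growth estimate for Reeb flows.

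For each $T>0$ I would choose a non-degenerate admissible Hamiltonian $H_T$ on the completion $\widehat{M}$ that is a $C^2$-small Morse perturbation of a function $h(r)$ on the symplectization end with slope $h'(r)\equiv T$ for $r$ large. The chords of $H_T$ from $L_0$ to $L_1$ partition into the finitely many intersection points of $L_0\cap L_1$ and a family of chords on the conical end which is in bijection with the set of Reeb chords from $\Lambda_0$ to $\Lambda_1$ of length at most $T$; moreover the Hamiltonian action of such a chord agrees with the Reeb length up to a bounded constant $C_0$ independent of $T$. Since $HW^{t}(M,L_0\to L_1)\cong HF^{t}(H_T;L_0,L_1)$ for $T$ large relative to $t$, the persistence module $\{HW^{t}\}_{t}$ is realized by these filtered complexes. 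In any persistence module coming from a filtered chain complex, each bar is born by a unique generator whose action equals the left endpoint, so the number of bars with left endpoint in $(-\infty,t]$ is at most the number of generators of action at most $t$. Combining these observations yields
$$\text{\textcrb}_{\epsilon}(M,L_0\to L_1,t)\le \#\bigl\{\text{Reeb chords }\Lambda_0\to\Lambda_1\text{ of length}\le t+C_0\bigr\}+\#(L_0\cap L_1)$$
for every $\epsilon>0$.

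The final and most technical ingredient is a Yomdin-type bound for Reeb chords,
$$\limsup_{T\to\infty}\frac{1}{T}\log\#\bigl\{\text{Reeb chords }\Lambda_0\to\Lambda_1\text{ of length}\le T\bigr\}\le\tope{\alpha},$$
which I would obtain by realizing the chord count as the transverse intersection number, inside $\Sigma$, of the $n$-dimensional trace $\bigcup_{t\in(0,T]}\varphi^{t}_{\alpha}(\Lambda_0)$ with the Legendrian $\Lambda_1$, and then invoking the Legendrian version of the Newhouse-Yomdin volume-growth theorem in the spirit of \cite{meiwes2018rabinowitz} to control the $n$-volume of this trace by $\tope{\alpha}$. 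Combining this with the previous display and taking $\epsilon\to 0$ then yields \eqref{equationentropies}.

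The hard part will be the Yomdin-type bound in the last step: while the closed-string version is classical, adapting it to the open-string setting of a pair of Legendrians requires working with the open trace submanifold, arranging transversality by a generic perturbation of the pair $(\Lambda_0,\Lambda_1)$, and verifying that such a perturbation changes neither the topological entropy of $\varphi^{t}_{\alpha}$ nor the barcode entropy (via a stability/interleaving argument together with Theorem \ref{thm:independence}). A secondary subtlety is the uniformity of the action-length comparison in the first step, which must hold independently of the slope $T$ so that a single admissible family captures the full wrapped persistence module in any given action range.
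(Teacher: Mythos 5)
There is a genuine gap at the step you yourself flag as the hard part: the ``Yomdin-type bound for Reeb chords,''
$$\limsup_{T\to\infty}\tfrac{1}{T}\log\#\{\text{Reeb chords } \Lambda_0\to\Lambda_1 \text{ of length}\le T\}\le\tope{\alpha},$$
is not true in general, and no generic perturbation of $(\Lambda_0,\Lambda_1)$ rescues it. Yomdin's theorem controls the growth of the \emph{volume} of the trace $\bigcup_{t\le T}\varphi^t_\alpha(\Lambda_0)$ (equivalently, of $\varphi^{s}_H(\widehat{L_0})\cap M_{r_{\max}}$), but volume does not control the number of transverse intersections with a \emph{fixed} submanifold $\Lambda_1$: intersections can be created by small-scale oscillations that contribute negligibly to volume, so the chord count can grow super-exponentially even when $\tope{\alpha}=0$ (this is exactly the phenomenon, familiar from Hamiltonian diffeomorphisms with zero entropy and arbitrarily fast periodic-point growth, that motivated barcode entropy in \cite{cineli2021topological}). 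After a single generic perturbation the count becomes finite for each $T$, but its exponential growth rate is still not bounded by the entropy, and the raw chord count is not stable under such perturbations anyway. Consequently your chain of inequalities bounds the barcode entropy by a quantity that can exceed $\tope{\alpha}$, and the theorem does not follow.

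What the paper does instead, and what your outline is missing, is to exploit the two facts that make the argument close: (i) the count $b_{\epsilon}$ of not-too-short bars, unlike the chord count, \emph{is} stable under Hofer-small Hamiltonian perturbations of the Lagrangian (inequality \eqref{barcodeofpertubation}), and (ii) intersection numbers are controlled by volume \emph{on average} over a parametric family of perturbations, via the Crofton inequality applied to an exact asymptotically conical Lagrangian tomograph $(L_s)_{s\in B^d}$ of $L_1$. Concretely, one bounds $b_{2\epsilon}(s_nH,L_0\to L_1)\le b_{\epsilon}(s_nH,L_0\to L_s)\le N_n(s)=|\varphi^{s_n}_H(\widehat{L_0})\cap L_s\cap M_{r_{\max}}|$ for every $s$ in the tomograph with $d_H(\widehat{L_1},L_s)<\epsilon/2$, then averages over $s$ with Crofton to get $\int_{B^d}N_n(s)\,ds\le \mathrm{const}\cdot\mathrm{vol}(\varphi^{s_n}_H(\widehat{L_0})\cap M_{r_{\max}})$, and only then invokes Yomdin to bound the volume growth by $T\,\tope{\alpha}$; the passage from the Hamiltonian $H$ back to $\hbar^{HW}(M,L_0\to L_1)$ is handled by the reparametrization results (Theorem \ref{teohomolyofHtohomologyofH} and Proposition \ref{sequecialbarcodeentropy}), not by an action-length comparison uniform in the slope. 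Your first two steps (realizing the persistence module by admissible Hamiltonians and bounding bar counts by generator counts) are consistent with the paper, but replacing the tomograph-plus-Crofton-plus-stability mechanism by a direct chord-count bound is where the proof fails.
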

\begin{remark}
    It is worth nothing that from the work of Meiwes in, for example \cite{meiwes2018rabinowitz}, the wrapped Floer homology barcode entropy is not a trivial invariant of the Liouville domain. Indeed, the barcode entropy is bounded by below by the symplectic growth of $HW$ (the exponential growth of the number of infinity bars in the persistence module $a \rightarrow HW^a(M,L_0 \rightarrow L_1)$), which is positive in some examples; see \cite[Proposition 2.5]{meiwes2018rabinowitz}.
\end{remark}

 The paper is organized as follows. In Section 2, we recall some definitions and establish notation about persistence modules and barcodes. We briefly recall the definition and some facts about wrapped Floer homology in Section 3. In Section 4, we define barcode entropy and present two equivalent definitions for it. Section 5 contains the main ideas for proving Theorem \ref{thm:independence}, and its proof. Finally, in Section 6, we discuss the main ingredients that goes into the proof of Theorem \hyperref[theoremA]{A} and present its proof. 

 \subsection*{Acknowledgments} 
 \addtocontents{toc}{\protect\setcounter{tocdepth}{-1}}
 The author is deeply grateful to Viktor Ginzburg for his invaluable guidance during this project, to Başak Gürel for useful comments on the statement of Theorem A, and to Erol Burat for useful discussions. 
 \addtocontents{toc}{\protect\setcounter{tocdepth}{2}}
\section{Persistence modules and barcodes preliminaries}

In this section, we recall some basic definitions and facts about persistence modules and barcodes, focusing on the definitions that are more appropriate for our purposes. We point out that the definitions presented here are not the most general ones.  The reader may check \cite{polterovich2020topological} for a broader exposition and more details about persistence modules, and \cite{usher2016persistent} for a treatment of barcodes in the setting of Floer theory.

\subsection{Persistence modules} \label{sectionpersistencemodules} \label{persistence module}

 Fix a ground field $\mathbb{F}$. A persistence module is a pair $(V,\pi)$, where $V$ is a family of vector spaces $V_t$ over $\mathbb{F}$, with $t \in \mathbb{R}$, and $\pi$ is a family of linear maps $\pi_{st}$, from $V_s$ to $V_t$, for $s\leq t$ satisfying 
\begin{enumerate}[label=(\roman*)] \label{barcodes}
    \item (Persistence) $\pi_{ss} = Id$ and $\pi_{sr} = \pi_{tr} \circ \pi_{st}$, for any $s\leq t \leq r$,
    \item (Locally constant) There exists a subset $\mathcal{S} \subset \mathbb{R}$, called spectrum,  so that $\pi_{st}$ is an isomorphism for $s\leq t$ on the same connect components of $\mathbb{R} \setminus \mathcal{S}$,
    \item (q-tame) The maps $\pi_{st}$ have finite rank for any $s<t$,
    \item (Left-semicontinuous) For all $t\in \mathbb{R}$, 
    $$V_t = \underset{s<t}{\lim_{\longrightarrow}}V_s,$$
    \item (Lower bound) There exists $s_0 \in \mathbb{R}$ such that $V_s = 0$, for all $s \leq s_0$.
\end{enumerate}

We note that the definition of persistence module may vary in the literature. Nonetheless, in this paper, we refer to a pair $(V,\pi)$ that satisfies the above condition as a persistence module. 

Moreover, depending on the context, one may impose different conditions on the spectrum in order to have barcodes well behaved. For example, in the case of a non-degenerate Hamiltonian $H : M \times S^1 \rightarrow \mathbb{R}$  on a closed symplectic manifold $(M,\omega)$ satisfying enough conditions for the Hamiltonian Floer homology of $H$ to be well defined, it is easy to see that the persistence module given by the filtered Hamiltonian Floer homology of $H$, i.e., $V_t = HF^t(H)$ and persistence maps $HF^s(H) \rightarrow HF^t(H)$, for $s\leq t$, induced by the inclusion on the complex has spectrum a finite set. For our purposes, we need to consider the case where the spectrum $\mathcal{S}\subset \mathbb{R}$ is a closed, bounded from below nowhere dense set.

In what follows, it will be useful for us to reparameterize and truncate persistence modules. We establish the terminology through the following definition:

\begin{definition}
    For $f:\mathbb{R} \rightarrow \mathbb{R}$ an increasing function, $T\in \mathbb{R}$, and $(V,\pi)$ a persistence module, we define
    \begin{enumerate}[label={(\alph*)}]
        \item The \textbf{action reparametrization of $(V,\pi)$ by f}, denoted by $(V^f,\pi^f)$, to be the persistence module defined by 
        $$(V^f)_t = V_{f(t)}, \ \pi^f_{st} = \pi_{f(s)f(t)}.$$
        \item The \textbf{truncation of $(V,\pi)$ at $T$}, denoted by $tru((V,\pi),T) = (tru(V,T),tru(\pi,T))$ to be the persistence module defined by 
\begin{center}
    $tru(V,T) = 
    \begin{cases}
        V_t & \text{for } t < T, \\
        0 & \text{otherwise},
    \end{cases},$

    $tru(\pi,T) =
    \begin{cases}
        \pi_{st} & \text{for } j < T, \\
        0 & \text{otherwise}.
    \end{cases}$
\end{center}
    \end{enumerate}
\end{definition}

We will say that a persistence module $(\Tilde{V},\Tilde{\pi})$ is a \emph{reparametrization of} $(V,\pi)$, if $(\Tilde{V},\Tilde{\pi}) = (V^f,\pi^f)$ for some increasing function $f: \mathbb{R} \rightarrow \mathbb {R}$. One may find the notation $act(V,f)$ used for the action reparametrization of $(V,\pi)$ by $f$ in the literature. Whenever the maps $\pi_{st}$ are well understood in the context, we will simply write $V^f$ and $tru(V,T)$ for the action reparametrization and truncation, respectively.  

\subsection{Barcodes}

A persistence module $(V,\pi)$ as described above admits a unique (up to permutation) decomposition as a sum of interval persistence modules. More precisely, for an interval $I\subset \mathbb{R}$, we define the interval persistence module $(\mathbb{F}(I),\pi_{st}(I))$ by 
\begin{center}
    \begin{equation}
    \mathbb{F}(I)_{s} =  \begin{cases}
        \mathbb{F} \ \  \text{if} \ s \in I, \\
        0 \ \  \text{otherwise},
    \end{cases}, \ \pi_{st}(I) = \begin{cases}
        Id \ \ \text{if} \ s,t \in I, \\
        0 \ \  \text{otherwise}.
    \end{cases}
\end{equation}
\end{center}
\begin{theorem} (\text{Structure theorem}) To every persistence  module $(V,\pi)$ corresponds a multiset of intervals $B(V)$ such that 
$$(V,\pi) \cong \bigoplus_{I \in B(V)}(\mathbb{F}(I),\pi(I)) .$$
Moreover, this data is unique up to permutation.
\end{theorem}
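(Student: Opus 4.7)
The plan is to reduce the statement to the classical decomposition theorem for representations of type $A_n$ quivers, and then pass to a limit using the discrete structure of the spectrum. Here is the outline.

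First, I would discretize. Choose an increasing sequence $\mathcal{T} = \{t_0 < t_1 < t_2 < \cdots\}$ containing the spectrum $\mathcal{S}$ together with one auxiliary point in each connected component of $\mathbb{R}\setminus \mathcal{S}$, with $V_{t_0}=0$ (possible by the lower-bound axiom, after translating). For a finite initial segment $T_n = \{t_0, \ldots, t_n\}$, the restricted data $(V_{t_i}, \pi_{t_i t_{i+1}})$ is a representation of the $A_{n+1}$ quiver with linear orientation. The q-tame axiom, combined with the locally constant and left-semicontinuous properties, forces each $V_{t_i}$ to be finite-dimensional: on the preceding component of $\mathbb{R}\setminus \mathcal{S}$ the module is constant, so $V_{t_i}$ is identified with the image of a single finite-rank transition map.

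Second, I would apply the structure theorem for $A_n$-representations with linear orientation: such a representation decomposes uniquely (up to permutation) into interval summands $\mathbb{F}([t_i,t_j])$. One can prove this by elementary linear algebra (inductively splitting off the longest bar starting at the minimal index for which the composition $\pi_{t_i t_j}$ is nonzero and extending it to a compatible basis), or invoke it as a special case of Gabriel's theorem, or of the PID structure theorem applied to $\mathbb{F}[x]$-modules. This yields a canonical barcode $B_n$ at each finite stage.

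Third, I would pass to the limit. Naturality of the finite decomposition under refinement implies that the barcodes $B_n$ form a compatible system: inserting a new grid point either subdivides no existing interval (by the locally constant axiom, if the new point lies in the interior of a component of $\mathbb{R}\setminus \mathcal{S}$) or records the birth and death locations of bars at points of $\mathcal{S}$. The inverse system stabilizes on any compact window because of q-tameness, producing a well-defined multiset $B(V)$ of intervals with endpoints in $\mathcal{S}\cup\{+\infty\}$. Left-semicontinuity then forces each finite bar to be left-closed, matching the definition of $\mathbb{F}(I)$.

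The main obstacle will be making Step 3 rigorous. One has to choose splitting bases at each finite stage consistently so that the interval decompositions glue into a genuine direct-sum decomposition of $(V,\pi)$; alternatively, one can argue via Krull--Schmidt in the pointwise finite-dimensional setting, as in Crawley-Boevey's proof of the interval decomposition theorem, and then establish uniqueness directly by expressing the multiplicity of each interval $I$ as an alternating sum of ranks of persistence maps at its endpoints, which is manifestly intrinsic to $(V,\pi)$ and hence independent of any choices.
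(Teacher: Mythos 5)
The paper does not actually prove this statement: it is quoted as the standard structure/interval-decomposition theorem and implicitly delegated to the literature (Crawley-Boevey type decomposition results and the q-tame theory of Chazal et al., cf.\ the references around Remark \ref{remarkmoderatebarcodes}). So your proposal cannot be compared to an in-paper argument; judged on its own merits, it follows the standard route (discretize, decompose $A_n$-representations into intervals, pass to a limit, with Crawley--Boevey and the rank-function multiplicity formula as the rigorous backstop), but it has two genuine gaps.

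First, your Step 1 claim that the axioms force each $V_{t_i}$ to be finite-dimensional is not correct. q-tameness only bounds the ranks of the transition maps $\pi_{st}$ for $s<t$; at a point of the spectrum, in particular at an accumulation point of $\mathcal{S}$, the space $V_t$ can be infinite-dimensional while all axioms (i)--(v) hold. For instance, the direct sum of the interval modules $\mathbb{F}([1-\tfrac1n,\,1+\tfrac1n))$, $n\ge 1$, is q-tame, locally constant off the closed nowhere dense set $\{1\pm\tfrac1n\}\cup\{1\}$, left-semicontinuous and bounded below, yet $V_1$ is infinite-dimensional. Your identification of $V_{t_i}$ with the image of a single finite-rank map only works when $t_i$ has a component of $\mathbb{R}\setminus\mathcal{S}$ immediately to its left, which fails at left accumulation points of $\mathcal{S}$. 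Consequently the elementary $A_n$ argument as you state it does not apply at every grid point, and more seriously, Crawley--Boevey's theorem (your fallback) assumes pointwise finite-dimensionality; for merely q-tame modules interval decomposability can genuinely fail, and one must either verify pointwise finite-dimensionality for the specific Floer-theoretic modules at hand or work with the q-tame structure theory (barcodes defined via the rank function / observable category), which is what the paper's cited references do. Second, your Step 3 is the actual content of the theorem and is only gestured at: compatibility of the finite-stage barcodes under refinement produces a candidate multiset $B(V)$, but not an isomorphism $(V,\pi)\cong\bigoplus_{I}\mathbb{F}(I)$ — the consistent choice of splittings across an uncountable index set is precisely what functorial-filtration arguments are for. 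Falling back on Crawley--Boevey (or Botnan--Crawley-Boevey) plus the alternating-sum-of-ranks formula for multiplicities is a legitimate repair for uniqueness and existence in the pointwise finite-dimensional case, but then the discretization in Steps 1--2 does no work, and the infinite-dimensional issue above still has to be addressed before the theorem, as stated for the paper's class of persistence modules, is actually established.
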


The multiset  $B(V,\pi)$, or simply $B(V)$, if there is no risk for confusion, consisting of the intervals in this decomposition is referred to as the barcode of the persistence module. 

In what follows, it will be convenient for us to consider the number of not-too-short bars of a barcode. More precisely, for $\epsilon \geq 0$, we denote by $B_{\epsilon}(V,\pi)$, or simply $B_{\epsilon}(V)$, the bars in $B(V,\pi)$ with length greater or equal to $\epsilon$, and $b_{\epsilon}(V,\pi)$, or simply $b_{\epsilon}(V)$ the number of bars in $B_{\epsilon}(V,\pi)$ i.e., $b_{\epsilon}(V,\pi) = |B_{\epsilon}(V,\pi)|$. We call a persistence module $(V,\pi)$ \emph{moderate} if $b_{\epsilon}(V)$ is finite for all $\epsilon>0$. 

Another major theorem about persistence modules and barcodes that will be used on the main results is the so called isometry theorem. More precisely:
\begin{theorem} \label{theoremisopersistence} (\text{Isometry theorem}) For two persistence modules $V$ and $W$, we have
$$d_{int}(V,W) = d_{bot}(B(V),B(W).$$
where $d_{inter}$ and $d_{bottle}$ are the interleaving and bottleneck distance, respectively. 
\begin{remark} \label{remarktruboundedbytotal} First, note that the both terms in the equality above might be infinity depending on $V$ and $W$. Moreover, it is easy to see that for any $T>0$, then 
    $$d_{int}(tru(V,T),tru(W,T)) \leq d_{int}(V,W).$$
    Therefore from the Isometry theorem \ref{theoremisopersistence}, follows that
    $$d_{bot}(B(tru(V,T)),B(tru(W,T))) \leq d_{bot}(B(V),B(W)).$$
\end{remark}
\end{theorem}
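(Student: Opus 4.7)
The Isometry theorem is a classical result of Chazal--de Silva--Glisse--Oudot (generalizing Cohen-Steiner--Edelsbrunner--Harer for the tame case), so the plan is to follow the standard outline, proving the two inequalities separately, and then deduce the Remark as a short consequence.

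For the easy inequality $d_{bot}(B(V),B(W)) \geq d_{int}(V,W)$, I would start from a $\delta$-matching $\mu$ of the multisets $B(V)$ and $B(W)$, where every unmatched bar has length at most $2\delta$. Using the Structure theorem to split $V$ and $W$ into direct sums of interval modules, one assembles a $\delta$-interleaving interval-by-interval: for each matched pair $I \leftrightarrow J$ there is a canonical $\delta$-interleaving $\mathbb{F}(I) \leftrightarrow \mathbb{F}(J)$ arising from the fact that $I$ and $J$ are within $\delta$ in the natural partial order on intervals, and for each unmatched interval one takes the zero interleaving, which is legitimate precisely because its length is $\leq 2\delta$ so the interleaving triangles close up trivially. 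Taking infima over $\delta > d_{bot}$ yields the stated bound.

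The harder direction $d_{int}(V,W) \geq d_{bot}(B(V),B(W))$ is the matching theorem. Given an $\epsilon$-interleaving $(f,g)$, the strategy I would follow is that of Bauer--Lesnick: encode each barcode through its rank function $r_V(s,t) = \mathrm{rank}(\pi_{st})$, use the box/inclusion--exclusion formula to recover the multiplicity of bars whose endpoints lie in any given rectangle, and observe that an $\epsilon$-interleaving forces the rank functions of $V$ and $W$ to dominate each other after an $\epsilon$-shift. From this domination one extracts a $\delta$-matching for every $\delta > \epsilon$ by a Hall/marriage argument applied level by level, using q-tameness (assumption (iii)) to ensure that each finite-rank rectangle yields a finite bipartite matching problem. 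The main obstacle is the combinatorial bookkeeping: one must match bars consistently across all scales simultaneously, rather than matching separately on each rectangle, and must handle infinite bars and bars accumulating near the lower bound $s_0$ of the persistence module in a compatible way. Taking the limit $\delta \searrow \epsilon$ and then $\epsilon \searrow d_{int}$ completes the proof.

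For the Remark, the argument is immediate and I would present it directly. Given any $\epsilon$-interleaving $(f,g)$ between $V$ and $W$, define $\tilde f$ and $\tilde g$ by keeping the components of $f$ and $g$ between degrees $<T$ and setting everything else to zero. These are again linear maps that commute with the persistence maps (since truncation kills all arrows crossing $T$), and the two interleaving triangles either inherit commutativity from $(f,g)$ or close trivially because their target is zero. Hence $d_{int}(tru(V,T),tru(W,T)) \leq d_{int}(V,W)$, and the bottleneck bound follows by applying the Isometry theorem to both sides.
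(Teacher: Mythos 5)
The paper does not actually prove this statement: both the Structure theorem and the Isometry theorem are quoted as classical results from the persistence-module literature (the relevant sources, e.g.\ \cite{chazal2016structure}, are cited in the surrounding remarks), and the inequality in Remark \ref{remarktruboundedbytotal} is asserted as ``easy to see.'' So there is no in-paper argument to compare against; your proposal amounts to re-deriving the cited theorem along the standard lines of Chazal--de Silva--Glisse--Oudot and Bauer--Lesnick. Your easy direction (assembling a $\delta$-interleaving from a $\delta$-matching, summand by summand, using the structure theorem) is correct and standard. Your sketch of the hard direction is the right reference point but is the weakest part as written: a rank-function-domination plus Hall-type matching argument ``level by level'' is exactly the naive strategy whose consistency-across-scales problems motivated the actual proofs in the literature, which instead go through the interpolation lemma together with a compactness/limit argument over finite matchings (CDGO, needed in the q-tame setting relevant here, where barcodes may have infinitely many short bars), or through the induced-matching construction of Bauer--Lesnick, which you name but do not actually follow. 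You acknowledge the obstacle, but as stated the step ``extract a $\delta$-matching by a marriage argument'' is not a proof; if you want a complete argument you should commit to one of those two routes. Your argument for the Remark --- truncate the interleaving maps, setting to zero every component whose source or target lies at or above $T$, and note that each interleaving triangle either is inherited or closes trivially because its target vanishes --- is correct, and is precisely the short verification the paper leaves implicit before invoking the Isometry theorem to pass from interleaving to bottleneck distance.
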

\begin{remark} \label{remarkmoderatebarcodes}
    The results [\cite{chazal2016structure}, Theorem 5.21] and [\cite{le2021barcodes}, Proposition 22] imply that the space of moderate persistence modules is naturally isometric to a subspace to the completion of the space of finite barcode upper semi-continuous persistence modules bounded from below. 
\end{remark}

\begin{remark} \label{bepsilonversusthetruncation}
    We notice that truncating a persistence module can increase the number of its short bars, but not the number of not-too-short ones. Indeed, for a persistence module $(V,\pi)$ and $T \in \mathbb{R}$, and $\epsilon>0$, the number $b_{\epsilon}(V)$ of bar with length greater or equal to $\epsilon$ is greater or equal to the number $\text{\textcrb}_{\epsilon}(V,T-\epsilon)$ of bars in $B(V)$ with length greater or equal to $\epsilon$ beginning below $T - \epsilon$, and the later is equal to the number $b_{\epsilon}(tru(V,T))$ of bar in $tru(V,T)$ with length greater or equal to $\epsilon$.
\end{remark}
\section{Wrapped Floer homology} In this section, we discuss the basic construction of filtered wrapped Floer homology and recall some facts that will be useful for proving the main theorems of this paper.
\subsection{Exact conical Lagrangians} This subsection provides the definitions of the Lagrangians we consider in wrapped Floer homology, along with some remarks that will be useful. 

Let $(M,\lambda)$ be a Liouville domain, and $(L,\partial L) \subset (M,\partial M)$ a Lagrangian. We call $L$ \emph{asymptotically conical} if 
\begin{itemize}
    \item $\Lambda = \partial L$ is a Legendrian submanifold of $(\Sigma, \xi_{M})$, where $\Sigma = \partial M$, and $\xi_{M}$ is the contact structure induced by $\lambda|_{\Sigma} = \alpha$,
    \item $L \cap [1-\epsilon,1] \times \Sigma = [1-\epsilon,1] \times \Lambda$ for sufficiently small $\epsilon > 0$.
 \end{itemize}
We can extend $L$ to an Lagrangian $\widehat{L} \subset \widehat{M}$ by taking $\widehat{L} = L \cup_{\Lambda} [1,\infty)\times \Lambda$, where $\widehat{M}$ denotes the symplectic completion of $M$, i.e.,
$$\widehat{M} = M \cup_{\Sigma
} [1,\infty) \times \Sigma,$$
with the symplectic form $\omega=d\lambda$ extended to $[1,\infty)\times \Sigma$ as
$$\omega = d(r\alpha),$$
where $r$ is the coordinate in $[1,\infty)$.
We also call the extended Lagrangians in $\widehat{M}$ asymptotically conical. In what follows, we will deal with \emph{exact asymptotically conical} Lagrangians, i.e., Lagrangians $(L,\partial L) \subset (M,\Sigma)$ that are asymptotically conical, and $\lambda|_{L} = df$, for some $f \in C^{\infty}(L)$. We refer to Lagrangians satisfying the last condition as \emph{exact Lagrangians}. Throughout the rest of the paper, we assume that all the Legendrian and Lagrangian submanifolds considered are connected.
\begin{remark} \label{intersectionpointshaveaction0}
    For an exact asymptotically conical Lagrangian $L$, the primitive $f$ for $\lambda_{L}$ can be taken identically zero by a modification of $\lambda$ without changing the dynamics of the Reeb flow on the boundary $\partial M$. Indeed, since $f$ is constant on a neighborhood of the boundary, by adding a constant to it we can assume the constant is zero, i.e., $f\equiv 0$ on that neighborhood. Now we extend $f$ to a function $F$ on $M$ with compact support in M minus the collar neighborhood. By adding $-dF$ to $\lambda$, we obtain a new Liouville form, which we still denote by $\lambda$, so that $\lambda|_{L}=0$. This procedure changes $\lambda$ in the the interior of $M$ but not on the boundary. In the case of a pair of exact asymptotically conical Lagrangians $(L_{0},L_{1})$, we can not use the same argument to change $\lambda$ in such a way that $\lambda|_{L_{0}}=\lambda|_{L_{1}} \equiv 0$, but we can assume $f_{0}(x)=f_{1}(x)$, for all intersection points $x\in L_0 \cap L_1$, when $L_0 \pitchfork L_1$, by modifying $\lambda$ on a neighborhood of each intersection point. Notice that since $L_0 \pitchfork L_1$, there are no intersection on the collar and therefore this modification does not effect the dynamics on the boundary. 
\end{remark}   

\subsection{Hamiltonians and action filtration}
Now we present the basic construction of wrapped Floer homology and define the persistence module obtained from it. We mostly follow \cite{meiwes2018rabinowitz}, \cite{ritter2013topological} and \cite{cineli2023invariant}.

Let $(M,\lambda)$ be a Liouville domain, and fix $L_{0}$, $L_{1}$ exact asymptotically conical Lagrangians in $M$ with $\Lambda_{0}=\partial L_{0}$ and $\Lambda_{1}= \partial L_{1}$.  

A Hamiltonian $H: \widehat{M} \rightarrow \mathbb{R}$ is called \emph{admissible} if 
\begin{itemize}
    \item $H<0$ on $M$, and
    \item There exists $T>0$, $B \in \mathbb{R}$, and $r_0 \geq 1$ such that, $H(r,x)=rT - B$ on $[r_0,\infty)\times \partial M$.
\end{itemize}
We call $T$ the slope of $H$ at infinity and denote it by $slope(H)=T$. When H satisfies only the last condition we call it linear at infinity. In what follows, we will need to deal with Hamiltonians $H$ so that $H|_{M} \equiv 0$, and are linear at infinity. We will refer to these Hamiltonians as \textit{semi-admissible}. It will be convenient to consider the class of convex \emph{(semi-)admissible Hamiltonians}. These are (semi-)admissible Hamiltonians $H$ such that
\begin{itemize}
    \item $H|_M \equiv const$ ,
    \item $H(r,x) = h(r)$, in $[1,r_{\max}) \times \partial M$, for $h:[1,r_{\max}] \rightarrow \mathbb{R}$ with $h''>0$ in $(1,r_{\max})$,
    \item $H$ linear at infinity in $[r_{\max}, \infty) \times \partial M$, with $slope(H) = T$. 
\end{itemize}

We emphasize that $r_{\max}$ depends on $H$, i.e., $r_{\max} = r_{\max}(H)$, but for convenience we might omit $H$ when the Hamiltonian is well understood. Unless specified otherwise, all the Hamiltonians in this paper are considered to be convex (semi-)admissible (therefore not necessarily non-degenerate). We denote by $X_H$ the Hamiltonian vector field associated to a Hamiltonian $H:\widehat{M}\rightarrow \mathbb{R}$, defined by
$$\omega(X_H,\cdot) = -dH,$$
along with $\varphi_H^t$, for $t\in \mathbb{R}$, and $\varphi_H = \varphi_H^1$,  its Hamiltonian flow and Hamiltonian diffeomorphism respectively. The Reeb flow of $\alpha$ is denoted by $\varphi_{\alpha}^t$, with $t \in \mathbb{R}$.

We denote the set of smooth chords from $\widehat{L_0}$ to $\widehat{L_1}$ by $\mathcal{P}_{L_0 \rightarrow L_1}$, i.e.,
$$\mathcal{P}_{L_0 \rightarrow L_1} = \{\gamma:[0,T] \rightarrow \widehat{M} \ \mid \ \gamma(0) \in \widehat{L_0}, \ \gamma(T) \in \widehat{L_1}\}.$$
For a chord $\gamma : [0,T]\rightarrow \widehat{M} \in \mathcal{P}_{L_0 \rightarrow L_1}$, we call $T$ the \emph{length} and denote it by $length(\gamma)$. We denote by $\widehat{\mathcal{P}}_{L_0 \rightarrow L_1}$ the chords in $\mathcal{P}_{L_0 \rightarrow L_1}$ with length $1$. 

For a Hamiltonian $H$ in $\widehat{M}$, we consider $\mathcal{T}^T_{L_0\rightarrow L_1}(H)$ and $\mathcal{T}_{L_0\rightarrow L_1}(H)$, the set of Hamiltonian chords with length at most $T$, and the set of all Hamiltonian chords, from $\widehat{L_0}$ to $\widehat{L_1}$, respectively. Since the Hamiltonians considered here are time independent, then for $\gamma \in $$\mathcal{T}_{L_{0} \rightarrow L_{1}}(H)$, we have $\gamma(t) = \varphi_H^t(\gamma(0))$ on the interval where $\gamma$ is defined.

For the Reeb vector field $R_\alpha$ on the boundary $(\Sigma, \xi_{M})$ associated to the contact form $\alpha$, we denote by $\mathcal{T}^{T}_{\Sigma_{0} \rightarrow \Sigma_{1}}(\alpha)$, and $\mathcal{T}_{\Sigma_{0} \rightarrow \Sigma_{1}}(\alpha)$ the set of Reeb chords with length at most $T$, and the set of all Reeb chord, from $\widehat{\Lambda_0}$ and $\widehat{\Lambda_1}$, respectively. We denote by $\mathcal{S}(\alpha,\Lambda_{0}\rightarrow \Lambda_{1})$ the set of lengths of Reeb chords from $\Lambda_{0}$ to $\Lambda_{1}$. This is a closed nowhere dense subset of $[0,\infty)$. 

 Note that for a convex Hamiltonian on $[1,\infty) \times \Sigma$, 
$$X_H = h'(r)R_{\alpha}.$$
In this case, the Hamiltonian chords from $\widehat{L_0}$ to $\widehat{L_1}$ in $\{r\}\times \Sigma$, for $r \geq1$, correspond to Reeb chords of length $t= h'(r)$.

The action functional $\mathcal{A}^{L_{0} \rightarrow L_{1}}_{H} = \mathcal{A}_{H} : \widehat{\mathcal{P}}_{L_{0} \rightarrow L_{1}} \rightarrow \mathbb{R}$ associate to a Hamiltonian $H$ is defined by 
\begin{equation} \label{eq: action functional}
\mathcal{A}_{H}(\gamma) = f_{0}(\gamma(0)) - f_{1}(\gamma(1)) + \int^{1}_{0} \gamma^{*}\lambda - \int^{1}_{0}H(\gamma(t))dt,
\end{equation}
where for $i=0,1$, the $f_{i}$ is a smooth functions on $L_{i}$ such that $df_{i} = \lambda|_{L_{i}}$.  It is not hard to see that the critical points of $\mathcal{A}_{H}$ are the Hamiltonian chords of $H$ from $\widehat{L}_{0}$ to $\widehat{L}_{1}$ reaching $\hat{L}_{1}$ at time 1. We denote by $Crit(H,L_0 \rightarrow L_1)$ the set of critical points of $\mathcal{A}_{H}$. Notice that from Remark \ref{intersectionpointshaveaction0}, the action functional \eqref{eq: action functional} reduces to 
$$\mathcal{A}_{H}(\gamma) =  \int^{1}_{0} \gamma^{*}\lambda - \int^{1}_{0}H(\gamma(t))dt,$$
for chords $\gamma$ in $[1,\infty) \times \Sigma$.

For (semi-)admissible Hamiltonians, we can associate a reparametrization function that transforms lengths in actions. More precisely:
\begin{lemma}
    For a semi-admissible Hamiltonian $H$, with $H(r,x)= h(r)$ in $[1,r_{\max}(H))\times \Sigma$ and $H(r,x) = rT - B$, in $r \geq r_{\max}$, the reparametrization function $A_H =A_h$ that transforms lengths in actions given by $A_h(t)=rh^{'}(r) - h(r)$, where $h'(r)=t$ is a well defined,  continuous monotone increasing bijection between $[0,T]$ and $[0,B]$.
\end{lemma}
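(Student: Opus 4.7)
The plan is to unpack the defining conditions on $h$, then use the Legendre-transform structure of $A_h$ to verify each claim in turn. First I would record the boundary conditions that $h$ must satisfy for $H$ to be continuously defined on $\widehat{M}$. Since $H$ is semi-admissible and convex, $H|_M\equiv 0$, the profile $h$ on $[1,r_{\max}]$ matches with a constant on the inside and with $rT-B$ on the outside. Requiring $C^1$ matching (which is implicit in the smoothness of $H$) gives $h(1)=0$ with $h'(1)=0$, together with $h(r_{\max})=r_{\max}T-B$ and $h'(r_{\max})=T$.

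Next I would observe that because $h''>0$ on $(1,r_{\max})$, the derivative $h'\colon[1,r_{\max}]\to[0,T]$ is a continuous strictly increasing bijection. Hence for every $t\in[0,T]$ there is a unique $r=r(t)\in[1,r_{\max}]$ with $h'(r)=t$, and $r(\cdot)$ is continuous. This is precisely what is needed to make $A_h(t)=rh'(r)-h(r)$ a well-defined continuous function on $[0,T]$.

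For the endpoint values I would plug in: at $t=0$ we get $r=1$, so $A_h(0)=1\cdot 0-h(1)=0$; at $t=T$ we get $r=r_{\max}$, so
\[
A_h(T)=r_{\max}\,T-h(r_{\max})=r_{\max}T-(r_{\max}T-B)=B.
\]
Thus $A_h$ maps $[0,T]$ into $[0,B]$ with the correct endpoint behavior.

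Finally, for strict monotonicity, I would differentiate. Since $h''(r)\,r'(t)=1$, the chain rule gives
\[
\frac{dA_h}{dt}=r'(t)h'(r)+r(t)h''(r)r'(t)-h'(r)r'(t)=r(t)h''(r)r'(t)=r(t)\ge 1>0.
\]
So $A_h$ is strictly increasing on $[0,T]$; combined with the continuous endpoint identification $A_h(0)=0$, $A_h(T)=B$, this forces $A_h$ to be a continuous, strictly monotone bijection $[0,T]\to[0,B]$, as claimed. There is no real obstacle here: the content is just the standard Legendre-transform identity, and the only thing one has to be a little careful about is the boundary matching $h'(1)=0$ and $h'(r_{\max})=T$, which encodes that $H$ is smooth across the transitions at $r=1$ and $r=r_{\max}$.
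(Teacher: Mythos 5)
Your proposal is correct and takes essentially the same elementary route as the paper: exploit the strict convexity of $h$ on $(1,r_{\max})$ together with the smooth matching conditions at $r=1$ and $r=r_{\max}$ to pin down the endpoint values and the monotonicity of $A_h$. The only cosmetic difference is that you invert $h'$ and differentiate in $t$ to get $A_h'(t)=r(t)\geq 1$, whereas the paper bounds differences of $rh'(r)-h(r)$ by $\int r h''\,dr$, a formulation that also hands it the bilipschitz estimate $t'-t \leq A_h(t')-A_h(t) \leq r_{\max}(t'-t)$ it reuses later; your derivative identity recovers the same bounds upon integration.
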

\begin{proof}
    For $1 \leq r < r' \leq r_{\max}$, then
    \begin{equation} \label{monotonicityofreparamentrization}
         rh'(r) -h(r)|^{r}_{r'} = \int^{r}_{r'} (rh'(r) - h(r))' dr = \int^{r}_{r'} rh''(r) dr,
    \end{equation}
    and by the convexity of $H$, 
    \begin{equation} \label{welldefinitinessofreparametrization}
       h'(r) - h'(r') = h'(r)|^{r}_{r'} \leq (rh'(r) -h(r))|^{r}_{r'} \leq r_{\max}h'(r)|^{r}_{r'} = r_{\max}(h'(r)- h(r')).
    \end{equation} 
    So if  $h'(r)=h'(r')$, then from \eqref{welldefinitinessofreparametrization}, we get $A_{h}(r)= A_{H}(r')$, which proves that $A_{h}$ is well defined. The monotonicity follows from \eqref{monotonicityofreparamentrization} since $h$ is convex and continuity is a consequence of the formula. Now, since 
    $$A_{h}(0) = h(0) = 0 \ \text{and} \ A_{h}(T) = r_{\max}h'(r_{\max}) - h(r_{\max}) = B,$$
    we conclude that $A_{h}$ is a bijection between $[0,T]$ and $[0,B]$. 
\end{proof}

\begin{comment}
    \begin{remark}
    For $H \in \mathcal{\Tilde{H}}_{reg}$ with $H(r,x)=h(r)$ in $V_{\delta}(\partial M)$ and  $H(r,x)=r-b$ for $1\leq r$, the reparametriation functions for $nH$ denoted by $A_{h_{n}}$ increases with $n$, i.e., 
    \begin{itemize}
        \item $A_{h_n}(t)$ increases with $n$.
    \end{itemize}

    Inded, consider $t \in [0,n]$, and let $r,\Tilde{r}$ such that $nh'(r) = (n+1)h(\Tilde{r}) = t$. Since $h$ is an increasing function, we can conclude that $\Tilde{r}<r$. Now we see that
    \begin{equation}
        \begin{align}
             A_{h_n}(t) - A_{h_{n+1}}(t)  & = rt -nh(r) - [\Tilde{r}t - (n+1)h(\Tilde{r})]  \\
             & = t(r - \Tilde{r}) - n( h(r) - h(\Tilde{r})) + h(\Tilde{r}) \\
             & \leq t(r - \Tilde{r}) + h(\Tilde{r}) \\
             & \leq t(r -\Tilde{r}) + \epsilon \\
             & \leq t\delta + \epsilon \\
             & \leq nh'(r) \delta + \epsilon \\
             & \leq n\delta + \epsilon             
        \end{align}
    \end{equation}
    
\end{remark}
\end{comment}
There are three properties of the reparamentrization function that are going to be useful for us throughout the paper. We first note that from \eqref{welldefinitinessofreparametrization}, the reparametrization function is a bilipschitz, i.e.,
\begin{equation} \label{bilipschitz}
    t'-t \leq A_h(t') - A_h(t) \leq r_{\max}(H)(t'-t),
\end{equation}
for any $0 \leq t \leq t'\leq T$. Now for $H,K$ semi-admissible Hamiltonians with $H \leq K$, then $A_h \geq A_k$ in $[1,slope(H)]$, and $t \geq A_h(t)$, for all $t \in  [0,slope(H)]$. It is not direct to see this, but a simple way to justify it is as follows. Notice that for $t=h'(r)$, minus the action $-A_H(t)$ is the intersection of the tangent line to $h$ at $(r,h(r))$ with the ordinate. If $H \leq K$, and $h'(r_0) =t=k'(r_1)$, then the tangent lines to $h$ and $k$ at $(r_0,h(r_0))$ and $(r_1,k(r_1))$ respectively are parallel since they have the same slope, but the tangent line to $k$ is above the one to $h$ since $h \leq k$, and therefore $- A_k(t) \geq - A_h(t)$. Since $h$ is strictly convex in $(1,r_{\max}(H))$, the tangent line to $h$ at any point $(r,h(r))$ with $r > 1$ intersect the abscissa in $(1,\infty)$, so it is on the right to the line $r \rightarrow tr -t+1$, which is the line that passes through $1$ with slope $t$. The intersection of this line with the ordinate is $-t$, therefore $ -t \geq - A_h(t)$. It is not hard to see that for a semi-admissible Hamiltonian $H$, we have
\begin{equation} \label{A_H(t)convergetot}
    A_{sH}(t) \rightarrow t,
\end{equation}
as $s\rightarrow \infty$. See Figure \ref{functionA_H}.
\begin{center}
    \begin{tikzpicture} \label{functionA_H}
        \begin{axis}[xmin = -0.1, xmax = 3, axis lines = left, xlabel = $r$ , xtick = {1,2}, xticklabels = {$1$, $r_{\max}$}, xticklabel style={above left} ,ymin = -1.5, ymax = 3, ytick = \empty, axis lines = middle]
        \addplot[color = black, samples = 50, domain = 1:2 ]{(x-1)^2};
        \addplot[domain =2:3, color = black, samples = 50]{2*(x - 2) + 1}node[right,pos = 0.7]{$H$};
         \addplot[domain =-1:3, color = black, samples = 50]{(x - 3/2) + 1/4}; % graph of H and tangent line
        \addplot[color = black, samples = 50, domain = 1:2 ]{2*(x-1)^2};
        \addplot[domain = 2:3, color = black, samples = 50]{4*(x - 2) + 2}node[right,pos = 0.1]{$K$};
        \addplot[domain =-1:3, color = black, samples = 50]{1*(x -5/4) + 1/8}; % graph of K and tangent line
        \addplot[domain = -1:3, color = black]{x-1}; % line passing though 1
         \node[pin = -1: {$-A_{h}(t)$}] at (axis cs:0, -5/4) {};
         \node[pin = 3: {$-A_{k}(t)$}] at (axis cs: 0,-9/8) {};
         \node[pin = 80: {$-t$}] at (axis cs:0, -1) {};
        \end{axis}
        \node[below] at (current bounding box.south) {Figure \ref{functionA_H}};
    \end{tikzpicture}
\end{center}

We call a Hamiltonian $H:\widehat{M} \rightarrow \mathbb{R}$ non-degenerate for $L_{0}$ and $L_{1}$ if all elements in $Crit(H,L_0 \rightarrow L_1)$ are non-degenerate, i.e., 
$$\varphi_H(\widehat{L}_{0}) \pitchfork \widehat{L}_{1}.$$
Such Hamiltonians must have $slope(H) \notin \mathcal{S}(\alpha,\Lambda_{0}\rightarrow \Lambda_{1})$. 
\begin{remark}
    For a Hamiltonian $H$ in $\widehat{M}$ with $slope(H) \notin \mathcal{S}(\alpha,\Sigma_0 \rightarrow \Sigma_1)$, it is not necessary to make time-dependent Hamiltonian perturbations of $H$ to guarantee finitely many Hamiltonian chords and have $HW^*(H,L_0 \rightarrow L_1)$ well defined (unlike $SH^*(H)$, where time-dependent perturbation on $H$ are suitable; see \cite{salamon1999lectures}). Instead, it is enough make compact support Hamiltonian perturbations $\varphi(L)$, for $L=\widehat{L_0}$  or $L=\widehat{L_1}$ (or alternatively, keep the Lagrangians fixed and perturbing $H$ to $H \circ \varphi^{-1}$) to ensure non-degenerate Hamiltonian chords, i.e., $\varphi_H^1(\widehat{L_0}) \pitchfork \widehat{L_1}$. The advantage to perturb the Lagrangians (or the Hamiltonian) via a time-independent perturbation is that the Hamiltonian chords, on the case of a convex Hamiltonians, remain reparametrizations of Reeb chords; See \cite{ritter2013topological}.
\end{remark}

Notice that for a non-degenerate $H$ for $L_{0}$ and $L_{1}$, the elements in $Crit(H,L_0 \rightarrow L_1)$ have image contained in $M_{r_{\max}} = M \cup [1,r_{\max}]\times \Sigma$, where $r_{\max} = r_{\max}(H)$. In particular $\phi_{H}(\widehat{L}_0) \cap \widehat{L}_1$ is a finite set.

\subsection{Floer homology and continuation maps} In this subsection, we establish notation and outline the construction of wrapped Floer homology.

An almost complex structure $J$ on $([1,\infty) \times \Sigma, \lambda =r \alpha)$ is called \emph{cylindrical} if
\begin{itemize}
    \item Preserves $\xi_{M}$, i.e., $J(\xi_{M})= \xi_{M}$,
    \item $J|_{\xi_M}$ is independent of $r$,
    \item $J(r \partial_{r}) = R_{\alpha}$, for $r\geq 1$.
\end{itemize}

If these conditions hold only for $r \geq r_{0}$, for some $r_{0}>1$, then we call $J$ asymptotically cylindrical, or cylindrical at infinity. We say that an almost complex structure in $\widehat{M}$ is \emph{admissible} if it is compatible with $\omega$ and cylindrical at infinity. 

Fix an admissible almost complex structure in $\widehat{M}$, and consider the $\omega$-compatible Riemannian metric $g$ on $\widehat{M}$ satisfying $\omega(\cdot, J \cdot) = g(\cdot, \cdot)$. By endowing $\widehat{\mathcal{P}}_{L_{0}\rightarrow L_{1}}$ with the induced metric from $g$, the solutions of the gradient equation $\partial_{s}u =- \nabla \mathcal{A}_{H}(u)$, for a Hamiltonian $H$, are equivalent to strips $u : \mathbb{R} \times [0,1] \rightarrow \widehat{M}$, referred as Floer strips, satisfying
\begin{equation}
\label{Floer equation}
    \begin{cases}
    $$\partial_{s}u +J(u)(\partial_{t}u - X_{H}(u))=0$$, \\
    $$u(\cdot , 0) \in \widehat{L}_{0}, \ u(\cdot , 1) \in \widehat{L}_{1}$$.
    \end{cases}
\end{equation}

The moduli space of parameterized  Floer strips connecting the chords $x_-$ and $x_+$ with respect to $H$ and $J$ is defined by
$$ \mathcal{M}(x_-,x_+,H,J) = \{u:\mathbb{R} \times [0,1] \rightarrow \widehat{M} \mid u \ \text{satisfies} \ \eqref{Floer equation}, \ \lim_{s \rightarrow \pm \infty}u(s,\cdot)=x_{\pm} \}.$$
\begin{remark} \label{wrappedtolagrangian}
    We point out that for a non-degenerate Hamiltonian $H$, the moduli spaces above define the Lagrangian Floer homology $HF(\phi_H(L_0),L_1)$. Indeed, the Floer strips correspond to the pseudo-holomorphic strips with boundaries in  $\phi_{H}(\widehat{L_0})$ and $\widehat{L_1}$ solutions of $\partial_s v + \Tilde{J_t}\partial_t v = 0$ converging to the intersection points of $\phi_H(\widehat{L_0})$ and $\widehat{L_1}$ corresponding to the chords $x$ and $y$, where $\Tilde{J_t} = d \phi^{1-t}_H \circ J_t \circ d\phi^{t-1}_H$. For further  details about Lagrangian Floer homology, see, for example, \cite{ritter2009novikov}, and \cite{ritter2013topological}.
\end{remark}

The Floer equation is invariant by the $\mathbb{R}$-action $(s,u) \rightarrow u(s+\cdot, \cdot)$, so we consider the moduli space of unparameterized Floer strips $\widetilde{\mathcal{M}}(x,y,H,J)$ given by 
$$\widetilde{\mathcal{M}}(x,y,H,J)= \frac{{\mathcal{M}}(x,y,H,J)}{\mathbb{R}}.$$ 

The energy of a Floer strip 
$u$ is defined by
\begin{align} \label{energy}
    E(u) & = \int_{S^1 \times \mathbb{R}}|\partial_{s} u|^{2}_{g_{s}}ds\wedge dt, 
\end{align}
and one can see that for $u \in \mathcal{M}(x,y,H,J)$
\begin{equation} \label{descrease energy}
    E(u) = \mathcal{A}_H(x) - \mathcal{A}_H(y),
\end{equation}
which implies that $\mathcal{A}_H(x) \leq \mathcal{A}_H(y)$ when $\mathcal{M}(x,y,H,J)$ is nonempty. 

Consider a non-degenerate (semi-)admissible Hamiltonian $H$ and admissible almost complex structures $J$. For a generic perturbation of $J$ in $M_{r_{\max}}$, which we keep denoting by $J$, the moduli spaces $\mathcal{M}(x,y,H,J)$ are smooth manifolds for any $x,y \in \mathcal{T}_{L_0 \rightarrow L_1}(H)$. Furthermore, since $J$ is still cylindrical in $[1,\infty
)\times \Sigma$ after the perturbation, all curves in $\mathcal{M}(x,y,H,J)$ are contained in $M_{R}$, where $R=\max \{R(x),R(y),r_{\max}\}$; see \cite{ritter2013topological} for more details. For any action interval $I$, the filtered wrapped Floer homology of $(H,J)$ is the homology of the chain complex given by
$$CW^I(H,J, L_{0} \rightarrow L_{1} ) =  \bigoplus_{x \in Crit^I(H,L_0 \rightarrow L_1)} \mathbb{Z}_{2} \cdot x,$$
and differential 
$$\partial : CW \rightarrow CW, \ \partial x = \sum_{y \in Crit^I(H,L_0 \rightarrow L_1)}(\# \mathcal{M}_0(x,y,H,J) \mod 2) \cdot y,$$
where $\mathcal{M}_0(x,y,H,J)$ is the zero dimensional part of $\widetilde{\mathcal{M}}(x,y,H,J)$ (since the Hamiltonian $H$ is linear at infinity, and $J$ is cylindrical at infinity, the maximal principal applies and we have that $\mathcal{M}_0(x,y,H,J)$ is compact), and is denoted by $HW^I(H,J,L_{0}\rightarrow L_{1})$, or shortly $HW^I(H,J)$, when there is no confusion about the Lagrangians.  We write $HW^{a}(H,J)$ instead of $HW^{(- \infty, a)}(H,J)$, and $HW(H,J)$ instead of $HW^a(H,J)$, for any $a > slope(H)$.

Now we discuss the continuation maps. Let $(H_{-},J_{-})$ and $(H_{+},J_{+})$ be two pairs of non-degenerate (semi-)admissible Hamiltonians for $L_0$ and $L_1$ and admissible almost complex structures, with $H_{-}(x)\leq H_{+}(x)$ for all $x \in \widehat{M}$, which we denote by ($H_{-},J_-) \leq (H_{+},J_+)$, satisfying the necessary conditions so that $HW$ is well defined for them. Consider $(H_{s},J_{s})$ to be a homotopy between the two pairs such that $H_{s}=H_{\pm}$ and $J_{s}=J_{\pm}$ for $s$ close to $\pm \infty$, and $J_s$ admissible for all $s \in \mathbb{R}$. The Floer continuation strips for $(H_s,J_s)$ are the cylinders $u:\mathbb{R} \times [0,1] \rightarrow \widehat{M}$ satisfying 

\begin{equation} 
\label{continuationstrips}
    \begin{cases}
        $$\partial_{s}u +J_{s}(u)(\partial_{t}u - X_{H_{s}}(u))=0,$$ \\
        $$u(\cdot , 0) \in \widehat{L}_{0}, \ u(\cdot , 1) \in \widehat{L}_{1}.$$
    \end{cases}
\end{equation}

For $x_{-}\in Crit(H_-)$ and $x_{+}\in Crit(H_+)$, the moduli space of Floer continuation strips connecting the chords $x_{-}$ and $x_{+}$ with respect to $(H_{s},J_{s})$ is defined by
$$\mathcal{M}(x_{-},x_{+},H_{s},J_{s}) = \{u:\mathbb{R} \times [0,1] \rightarrow \widehat{M} \mid u \ \text{satisfies} \ \eqref{continuationstrips}, \ \lim_{s \rightarrow \pm \infty}u(s,\cdot)=x_{\pm}\}.$$

For generic time dependent perturbation of the pair $(H_s,J_s)$ in $M_{r_{\max}}$ ($r_{\max}$ is such that $r_{\max} \geq r_{\max}(H_s)$, for all $s\in \mathbb{R}$), which we keep denoting by $(H_s,J_s)$, the moduli spaces $\mathcal{M}(x_-,x_+,H_s,J_s)$ are smooth manifold with compactification on broken trajectories for any $x_- \in \mathcal{T}_{L_0 \rightarrow L_1}(H_-)$ and $x_+ \in \mathcal{T}_{L_0 \rightarrow L_1}(H_+)$ \cite[for more details]{ritter2009novikov,salamon1999lectures}. We denote by $\mathcal{M}_{0}(x_{-},x_{+},H_{s},J_{s})$ the zero dimensional part of $\mathcal{M}(x_{-},x_{+},H_{s},J_{s})$. Due to the term $J_{s}$ on equation \eqref{continuationstrips}, $\mathbb{R}$-reparametrizations of solutions of \eqref{continuationstrips} are no longer solutions. For non-decreasing homotopies $(H_{s},J_{s})$ where $J_{s}$ are admissible for $r\geq r_{\max}$, for all $s \in \mathbb{R}$, the maximum principle holds, so all curves in $\mathcal{M}(x_{-},x_{+},H_{s},J_{s})$ are contained in $M_{R}$, where $R=\max \{R(x_{\pm}),r_{\max}\}$ \cite[for more details]{ritter2013topological}.

One can show that the energy of a Floer continuation strips (similarly defined as the energy of Floer strips) $u \in \mathcal{M}(x_-,x_+,H_sJ_s)$ satisfies
\begin{align} \label{energycontinuation}
    E(u)  = \mathcal{A}_{H_{-}(x_{-})} - \mathcal{A}_{H_{+}}(x_{+}) - \int \partial_{s} H_{s}(u)ds \wedge dt.
\end{align}

For monotone homotopies $(H_{s},J_{s})$, and an interval $I$, the continuation map is defined by (notice that from \eqref{energycontinuation}, the differential does not increase action)

$$\mathcal{\hat{X}}_{H_{-}\rightarrow H_{+}}^I: CW^I(H_{-},J_{-}) \rightarrow CW^I(H_{+},J_{+}),$$
with 
$$\mathcal{\hat{X}}^I_{H_{-}\rightarrow H_{+}}(x_{-}) = \sum_{x_{+} \in Crit^I(H_+)} (\# \mathcal{M}^{H_{s}}_{0}(x_{-},x_{+},H_s,J_s) \mod2 )\cdot x_{+},$$
and by an standard argument it can be shown that $\mathcal{\hat{X}}^I_{H_{-}\rightarrow H_{+}}$ descents to a map $\mathcal{X}^I_{H_{-}\rightarrow H_{+}}: HW^I(H_{-},J_{-}) \rightarrow WH^I(H_{+},J_{+})$ in homology (see for example \cite{salamon1999lectures}).

\begin{lemma} 
    \label{conpro}
    \begin{enumerate}
        \item Changing the monotone homotopy $(H_{s},J_{s})$ changes $\mathcal{\hat{X}}^I_{H_{-}\rightarrow H_{+}}$ by a chain homotopy, so $\mathcal{X}^I_{H_{-}\rightarrow H_{+}}$ is independent of $(H_{s},J_{s})$. 
        \item For any $H \leq K \leq G$, then $\mathcal{X}^I_{K \rightarrow G} \circ \mathcal{X}^I_{H \rightarrow K} = \mathcal{X}^I_{H \rightarrow G}$.
        \label{property3}
        \item The constant homotopy $(H_{s},J_{s}) = (H,J)$ for all $s \in \mathbb{R}$ induces the identity in $HW^I(H,J)$.
        \item For $H_{\pm}$ with the same slope at infinity, then $\mathcal{X}_{H_{-}\rightarrow H_{+}}$ is an isomorphism.
        \label{property4}
    \end{enumerate}
\end{lemma}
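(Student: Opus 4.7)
The statement packages four classical properties of Floer continuation maps. I would handle (1)--(3) by standard parametric moduli space arguments and devote the bulk of the effort to (4), which is the one that genuinely uses the Liouville/wrapped structure.

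For (1), the plan is to take two monotone homotopies $(H_s^0,J_s^0)$ and $(H_s^1,J_s^1)$ and connect them through a two-parameter family $(H_s^\tau, J_s^\tau)$, $\tau\in[0,1]$, monotone in $s$ for each fixed $\tau$. After a generic perturbation in $M_{r_{\max}}$, the parametrized moduli space $\bigsqcup_\tau \mathcal{M}(x_-,x_+,H_s^\tau,J_s^\tau)$ is a smooth manifold, and the count of its $0$-dimensional part defines a chain homotopy between $\hat{\mathcal{X}}^0$ and $\hat{\mathcal{X}}^1$. Monotonicity along each slice, together with the fact that the endpoints are (semi-)admissible, supplies the maximum principle and hence compactness of the relevant moduli spaces.

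For (2), I would glue and stretch: given monotone homotopies from $H$ to $K$ and from $K$ to $G$, form the concatenated homotopy that equals the first for $s\le -R$, is constantly $K$ on $[-R,R]$, and equals the second for $s\ge R$. For $R$ large, the standard gluing argument puts pairs of broken strips meeting at a critical point of $\mathcal{A}_K$ in bijection with rigid solutions for the glued homotopy, showing that $\hat{\mathcal{X}}_{K\to G}\circ \hat{\mathcal{X}}_{H\to K}$ agrees, up to chain homotopy, with the continuation chain map for the glued homotopy; by (1), this map equals $\mathcal{X}_{H\to G}$ in homology.

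For (3), the constant homotopy reduces \eqref{continuationstrips} to the Floer equation \eqref{Floer equation}, so $\mathcal{M}(x_-,x_+,H_s,J_s)=\mathcal{M}(x_-,x_+,H,J)$. Unlike the differential, the continuation moduli space is not quotiented by $\mathbb{R}$-translation, so any non-constant Floer strip contributes a component of dimension $\ge 1$. The $0$-dimensional part therefore consists only of constant strips, which force $x_-=x_+$ and contribute the identity.

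Part (4) is the main obstacle. Since $H_-$ and $H_+$ have the same slope $T$ at infinity, there exists a constant $C>0$ with $H_+\le H_-+C$. Adding a constant to $H_-$ does not change the Hamiltonian vector field, so the chords and the Floer differential for $H_-+C$ coincide with those of $H_-$; only the action is globally shifted by $-C$. A monotone homotopy of the form $H_s=H_-+f(s)C$, with $f$ going from $0$ to $1$, satisfies $X_{H_s}=X_{H_-}$ for all $s$, so its continuation moduli spaces coincide with the Floer moduli spaces of $H_-$; as in (3), the resulting continuation map is the natural identification $CW(H_-)\to CW(H_-+C)$ and is in particular an isomorphism. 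Applying (1) and (2) to the chain $H_-\le H_+\le H_-+C$ yields
\begin{equation*}
\mathcal{X}_{H_+\to H_-+C}\circ \mathcal{X}_{H_-\to H_+}=\mathcal{X}_{H_-\to H_-+C},
\end{equation*}
so the right-hand side being an isomorphism forces $\mathcal{X}_{H_-\to H_+}$ to be injective and $\mathcal{X}_{H_+\to H_-+C}$ to be surjective. Choosing $C'>0$ with $H_-+C\le H_++C'$ and running the same argument on $H_+\le H_-+C\le H_++C'$ shows that $\mathcal{X}_{H_+\to H_-+C}$ is also injective, hence an isomorphism; combined with the previous identity, $\mathcal{X}_{H_-\to H_+}$ is an isomorphism as required.
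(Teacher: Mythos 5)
Your argument is correct, but note that the paper does not actually prove Lemma \ref{conpro}: it defers entirely to the cited papers of Ritter, so there is no in-text proof to compare against. Parts (1)--(3) of your proposal are the standard parametrized-moduli, gluing/stretching, and $s$-translation-invariance arguments, and they match what the cited references do. For part (4) your route is a genuine alternative to the usual one: the standard argument observes that when the slopes at infinity agree the homotopy can be taken linear of constant slope at infinity, so the maximum principle and energy bounds hold for the reverse (non-monotone) homotopy as well, giving continuation maps in both directions whose composites are chain homotopic to the identity by (1)--(3). You instead stay entirely within the monotone framework by sandwiching $H_-\le H_+\le H_-+C\le H_++C'$, using that adding a constant leaves $X_H$, the chords and the differential unchanged, so $\mathcal{X}_{H_-\to H_-+C}$ and $\mathcal{X}_{H_+\to H_++C'}$ are the canonical identifications, and then the two-out-of-three algebra forces $\mathcal{X}_{H_-\to H_+}$ to be an isomorphism. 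This works; its only cost is that the intermediate Hamiltonians $H_-+C$ and $H_++C'$ are no longer (semi-)admissible (they are positive on $M$), so you must run the Floer package for Hamiltonians that are merely linear at infinity --- which the paper implicitly does anyway, e.g.\ via $HW^a(H+c)=HW^{a+c}(H)$ and the maps $\mathcal{X}^{H\to K+\delta}$ in the proof of Proposition \ref{propostiondistances}. Finally, your constant-shift trick moves the action filtration by $c$, so it only proves the unfiltered statement; this is consistent with the lemma, since part (4) is stated without the action window $I$, but it is worth saying explicitly.
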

For the proof of Lemma \ref{conpro}, see for example \cite{ritter2013topological} and \cite{ritter2009novikov}. From \hyperref[property3]{\emph{(3)}} in the lemma above, it can be shown that the wrapped Floer homology of $(H,J)$ is independent of $J$, so we write $HW^a(H)$, instead of $HW^a(H,J)$, for any $a\in \mathbb{R}$, and from part \hyperref[property4]{\emph{(4)}} in Lemma \ref{conpro}, follows that the wrapped Floer homology for a Hamiltonian $H$ depends only on it's slope at infinity, i.e., $HW(H)=HW(H^{T})$, where $T= slope(H)$.

Now let $H$ any Hamiltonian (not necessarily non-degenerate) with $slope(H) \notin \mathcal{S}(\alpha,\Lambda_0 \rightarrow \Lambda_1)$ and an action interval $I\subset \mathbb{R}$. Then the filtered wrapped Floer homology $HW^I(H)$ is readily defined over $\mathbb{Z}_2$ as long as the end points of $I$ are not in $\mathcal{S}(H)$. This is the wrapped homology $HW^I(\Tilde{H})$ of a small non-degenerate perturbation of $H$ with $slope(\Tilde{H})=slope(H)$. Using the continuation map, it is easy to see that $HW^I(\Tilde{H})$ is independent of $\Tilde{H}$ when $\Tilde{H}$ is sufficiently close to $H$. We write $HW^a(H):=HW^{(-\infty,a]}(H)$ for simplicity. 

For $a \leq b$ with $a,b \notin S(H)$, we have the well defined map "inclusion" map 
$$i_H^{a,b}: HW^a(H)\rightarrow HW^b(H).$$

Now we extend the definition of $HW^a(H)$ for $a \in \mathbb{R}$ by considering
$$HW^a(H) := \lim_{\begin{smallmatrix} \longrightarrow & \\ a' \leq a \end{smallmatrix}}HW^{t'}(H),$$
where we assume $t' \notin \mathcal{S}(H)$. The "inclusion" maps naturally extend to these homology spaces. We notice that $i^{a,b}_H$ are isomorphisms as long as the interval $[a,b)$ has not point in $\mathcal{S}(H)$. Also, note that
$$HW^a(H) = 0, \ \text{for} \ a\leq 0,$$
for semi-admissible Hamiltonians. Therefore, we have that $a \rightarrow HW^a(H)$ together with the "inclusion" maps define a persistence module as in defined in Section \ref{persistence module}.
\begin{remark} \label{perturbation}
    The wrapped Floer homology is insensitive  to perturbations of the Hamiltonian $H$ and $a \in \mathbb{R}$, when $slope(H) \notin \mathcal{S}(\alpha,L_0 \rightarrow L_1)$ and $a \notin \mathcal{S}(H)$. More precisely, for two Hamiltonians $H$ and $H'$ $C^0$-close to each other outside of the domain where they are linear at infinity, and $a$ close to $a'$ meeting the conditions above,  there is a natural isomorphism 
    $$HW^a(H) \cong HW^{a'}(H').$$
\end{remark}

Using Remark \ref{perturbation}, it is easy to see that continuation maps between two Hamiltonians naturally extends to the degenerate case through small perturbation of the Hamiltonians.

Now we work towards removing the assumption that $T=slope(H) \notin \mathcal{S}(\alpha,L_0 \rightarrow L_1)$. To this end, for any $a\in \mathbb{R}$ we define 
\begin{equation} \label{defwrapdeg}
    HW^a(H) := \lim_{\begin{smallmatrix}
    \longrightarrow \\ H' \leq H
\end{smallmatrix}}HW^a(H'),
\end{equation}
where the limit is taken over the Hamiltonians $H'\leq H$ with $slope(H') \notin \mathcal{S}(\alpha,L_0 \rightarrow L_1)$. Clearly in this case, $slope(H') < slope(H)$. The relevant case for us is that of the total wrapped Floer homology, i.e., $HW^a(H)$, for $a>\sup \mathcal{S}(H)$ for a semi-admissible Hamiltonian $H$ with $slope(H) \in \mathcal{S}(\alpha,L_0 \rightarrow L_1)$. When $H$ is semi-admissible, we notice that Definition \ref{defwrapdeg} is equivalent to
\begin{equation} \label{deferapdegs}
    HW(H):= \lim_{\begin{smallmatrix}
    \longrightarrow \\ 0<s'<1
\end{smallmatrix}}HW(sH),
\end{equation}
where the limit is taken over $0 < s'<1$ with $s'T\notin \mathcal{S}(\alpha,L_0 \rightarrow L_1)$, since the Hamiltonians involved in the Definition \ref{deferapdegs}
form a cofinal sequence for the Hamiltonians involved in Definition \ref{defwrapdeg}. This equivalent definition for semi-admissible Hamiltonians is more suitable for our purposes. From Definition \ref{deferapdegs}, we have that for $0\leq s$, the wrapped Floer homology groups for a semi-admissible Hamiltonian $H$ can be obtained by as
$$HW(sH) = \lim_{\begin{smallmatrix}
    \longrightarrow \\ s'\leq s
\end{smallmatrix}}HW(s'H),$$
 where the limit is taken over $s'\leq s$ with $s'T \notin \mathcal{S}(\alpha,L_0 \rightarrow L_1)$. 
By setting $HW(sH) = 0$, if $s<0$, we have that $s\rightarrow HW(sH)$ is defined for all $s\in \mathbb{R}$ regardless of $sT \notin \mathcal{\alpha}$, and define a persistence module with structural maps the "inclusion" maps.

\subsection{Filtered wrapped Floer homology}
In this subsection, we focus on reviewing the definitions and properties of filtered wrapped Floer homology of semi-admissible Hamiltonians. The approach used here mostly follows the ideas in \cite{cineli2024barcode} and \cite{cineli2023invariant} and references therein. Most of the theorems contained in this subsection are proved in one of the two papers for the case of symplectic homology. However, we present all the proofs with minor modifications for the case of wrapped Floer homology for the sake of completeness.

\
The filtered wrapped Floer homology $HW^a(M,L_0 \rightarrow L_1)$, for $a>0$ is defined as
\begin{equation} \label{defwrap}
    HW^a(M,L_0 \rightarrow L_1) := \lim_{\begin{smallmatrix}
    \longrightarrow \\ H
\end{smallmatrix}} HW^a(H,L_0 \rightarrow L_1),
\end{equation}
where the limit is taken over the set of admissible Hamiltonians. Since convex admissible Hamiltonian form a cofinal sequence, we can consider $H$ on this class. Furthermore, we set
\begin{equation} \label{conventionwrapa<0}
    HW^a(M,L_0 \rightarrow L_1) := 0 \ \text{when} \ a \leq 0.
\end{equation}

\begin{comment}
   \begin{remark} In \ref{defwrap}, we could have used Hamiltonians satisfying $H|_M \leq 0$ instead of $H|_M < 0$, and consequently used the family of convex semi-admissible as a cofinal sequence to obtain the same groups. Indeed, for two sequences $(s_i)$ and $(\epsilon_i)$ of positive number with $s_i \rightarrow \infty$ and $\epsilon_i \rightarrow \infty$, then $H_i = s_iH - \epsilon_i$ is a cofinal sequence of admissible Hamiltonians. 
     
\end{comment} 
\begin{theorem} \label{teohomolyofHtohomologyofH}
    Let $H$ be a semi-admissible Hamiltonian with $T = slope(H)$. Then for every $a\leq T$, there exists an isomorphism
    \begin{equation} \label{isowraptohamilton}
        \Phi^a_H : HW^a(M,L_0 \rightarrow L_1) \rightarrow HW^{A_H(a)}(H,L_0 \rightarrow L_1).
    \end{equation}
    Moreover, these isomorphisms are natural in the sense that they commute with the "inclusion" and monotone continuation maps. More precisely, for any $a<a'\leq T$, and two semi-admissible Hamiltonians $H'\leq H$, the diagrams
\begin{center} 
        \begin{tikzcd}
        HW^{a'}(M,L_0 \rightarrow L_1) \arrow[d] \arrow[r, "\Phi_H^{a'}"] & HW^{A_H(a')}(H,L_0 \rightarrow L_1) \arrow[d] \\
        HW^{a}(M,L_0 \rightarrow L_1) \arrow[r, "\Phi_H^a"] & HW^{A_H(a)}(H,L_0 \rightarrow L_1), 
    \end{tikzcd}
    \end{center}
    and
     \begin{center} 
        \begin{tikzcd}
        HW^a(M,L_0 \rightarrow L_1) \arrow[d, "id"] \arrow[r, "\Phi_{H'}^a"] & HW^{A_{H'}(a)}(H',L_0 \rightarrow L_1) \arrow[d] \\
        HW^a(M,L_0 \rightarrow L_1) \arrow[r, "\Phi_H^a"] & HW^{A_H(a)}(H,L_0 \rightarrow L_1), 
    \end{tikzcd}
    \end{center}
    commute, where the vertical arrows are the "inclusion" maps in the first diagram and the right vertical arrow is the monotone continuation map in the second.
    
\end{theorem}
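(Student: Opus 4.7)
My strategy is to use the chord-level bijection between Hamiltonian chords of a convex semi-admissible Hamiltonian (in the conical region) and Reeb chords to identify both $HW^a(M, L_0 \to L_1)$ and $HW^{A_H(a)}(H, L_0 \to L_1)$ with the same Reeb-chord data, realizing the isomorphism $\Phi_H^a$ through monotone continuation maps applied to a cofinal family of admissible Hamiltonians obtained by rescaling $H$. First I would reduce to the regular case $T := \mathrm{slope}(H) \notin \mathcal{S}(\alpha, \Lambda_0 \to \Lambda_1)$ with $a$ outside the Reeb-chord length spectrum, deferring the general case to an extension via the limit definition~\eqref{deferapdegs} together with left semicontinuity of both persistence modules. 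In the regular regime, the generators of $HW^{A_H(a)}(H)$ correspond bijectively, via $t = h'(r)$, to Reeb chords of length at most $a$.

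The crux is the following technical lemma: for any two convex semi-admissible Hamiltonians $H \leq K$ with slopes outside the spectrum, the monotone continuation map restricts to a filtered isomorphism
\begin{equation*}
    \mathcal{X}_{H \to K} : HW^{A_H(a)}(H, L_0 \to L_1) \xrightarrow{\cong} HW^{A_K(a)}(K, L_0 \to L_1)
\end{equation*}
for all $a \leq \mathrm{slope}(H)$ outside the spectrum. I would prove it by: (i) the action-energy inequality~\eqref{energycontinuation} combined with a maximum-principle argument shows the continuation strips stay in the conical region and send chords of Reeb length $\leq a$ to chords of Reeb length $\leq a$, making the map well-defined at the stated filtration levels; (ii) both sides are generated by the same set of Reeb chords, and a comparison with the equal-slope case via Lemma~\ref{conpro}(4) upgrades this chord-level bijection to an isomorphism in homology.

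Given this, I construct $\Phi_H^a$ as follows. Take a cofinal family of admissible Hamiltonians $K_s = sH - \epsilon_s$ with $s \to \infty$, $sT \notin \mathcal{S}(\alpha, \Lambda_0 \to \Lambda_1)$, and $K_s \geq H$. Since $A_{K_s}(a) \geq a$ for a semi-admissible $K_s$ (as the tangent-line geometric argument preceding~\eqref{A_H(t)convergetot} shows $A_h(t) \geq t$), there is a persistence ``inclusion'' map $HW^a(K_s) \to HW^{A_{K_s}(a)}(K_s)$. Composing with the inverse of the continuation isomorphism $\mathcal{X}_{H \to K_s}$ provided by the lemma yields maps $HW^a(K_s) \to HW^{A_H(a)}(H)$; these are compatible with the monotone continuation maps of the direct system by Lemma~\ref{conpro}, and thus pass to the direct limit to define $\Phi_H^a: HW^a(M, L_0 \to L_1) \to HW^{A_H(a)}(H, L_0 \to L_1)$.

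To see $\Phi_H^a$ is an isomorphism, the key input is the convergence $A_{K_s}(t) \to t$ as $s \to \infty$: for any generator $\gamma$ of $HW^{A_H(a)}(H)$ corresponding to a Reeb chord of length $t_0 < a$, choosing $s$ large enough that $A_{K_s}(t_0) < a$ places the corresponding chord class in $HW^a(K_s)$, which maps to the class of $\gamma$ under $\Phi_H^a$; this gives surjectivity, and injectivity follows from the chord-level bijection together with the filtered isomorphism of the lemma. Naturality in both displayed diagrams then follows from the functoriality of continuation maps (Lemma~\ref{conpro}) and the commutation of the ``inclusion'' persistence maps with the continuation isomorphisms of the lemma. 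I expect the main obstacle to be the filtered isomorphism lemma itself, which requires a careful action-energy estimate and a maximum-principle argument to ensure that the Floer continuation strips neither escape the conical region nor create new generators outside the bijective correspondence of Reeb chords.
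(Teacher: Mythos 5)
Your overall architecture is the same as the paper's: reduce to the family $s\mapsto sH$, compare the filtered homologies of two Hamiltonians $H\leq K$ at the reparametrized levels $A_H(a)$ and $A_K(a)$, use $A_{sH}(t)\to t$ together with the identification of $HW^a(M,L_0\to L_1)$ as a limit over rescalings of $H$ (the paper's Lemma \ref{lemmawrapusingHamiltonians}), and get naturality from functoriality of continuation maps. The difference is how the central comparison is established, and that is where your proposal has a genuine gap: you assert, as a ``technical lemma,'' that the monotone continuation map itself restricts to an isomorphism $HW^{A_H(a)}(H)\to HW^{A_K(a)}(K)$, justified by \eqref{energycontinuation} plus a maximum principle and by ``both sides are generated by the same Reeb chords.''

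Neither justification works as stated. The energy identity \eqref{energycontinuation} for a monotone homotopy only shows that continuation does not increase the action, so from level $A_H(a)$ you only get a map into level $A_H(a)$ of $K$; since $A_K(a)\leq A_H(a)$, the window $(A_K(a),A_H(a)]$ may contain actions of chords of $K$ whose Reeb length exceeds $a$, so the claim that strips ``send chords of Reeb length $\leq a$ to chords of Reeb length $\leq a$'' is a strictly finer chain-level statement that does not follow from the maximum principle for an arbitrary monotone homotopy. Likewise, having the same generators does not upgrade to an isomorphism of filtered homologies, since the differentials (and hence the barcodes below a given level) could a priori differ; Lemma \ref{conpro}(4) only controls the total homology for equal slopes, not the filtered pieces at shifted levels. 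The paper fills exactly this hole in Proposition \ref{propositiotoprovetheorem}: it interpolates through an intermediate Hamiltonian $H_{01}$ that coincides with $H$ up to the radius $r_0$ determined by $A_H(r_0)=a$ (so the maximum principle confines all relevant chords and strips to $M_{r_0}$ and the first comparison happens at the fixed level $a$), and then runs a same-slope linear homotopy from $H_{01}$ to $K$ with a continuously moving action window $(-\infty,f_s(a)]$, whose endpoints stay off the spectrum by the spectral correspondence \eqref{specH_0toH_1}; this moving-window invariance is Lemma \ref{lemmainvariance}, not a fixed-level continuation map. Without an argument of this type (or a genuinely new chain-level estimate), your key lemma is unproven, and the rest of your construction, which is otherwise parallel to the paper's, does not get off the ground. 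A minor additional point: your cofinal family $K_s=sH-\epsilon_s$ does not satisfy $K_s\geq H$ near $M$ (there $K_s<0=H$), which is precisely the discrepancy between admissible and semi-admissible Hamiltonians that the paper resolves through Lemma \ref{lemmawrapusingHamiltonians} rather than through a direct monotonicity assumption.
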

A consequence of Theorem \ref{teohomolyofHtohomologyofH}, central for this paper, is the following corollary. It states that the filtered wrapped Floer homology defines a persistence module, in the sense of Section \ref{sectionpersistencemodules}, which is not evident from the definition. Moreover, we will also use Theorem \ref{teohomolyofHtohomologyofH} for computing the wrapped Floer homology barcode entropy of $(M,\lambda,L_0 \rightarrow L_1)$ using semi-admissible Hamiltonians.
\begin{corollary}
    The family of vector spaces $a \rightarrow HW^a(M,L_0 \rightarrow L_1)$ is a persistence module, with structure maps the direct limit of the "inclusion" maps. 
\end{corollary}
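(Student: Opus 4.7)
My plan is to verify each of the five axioms of a persistence module by using the natural isomorphisms $\Phi^a_H$ of Theorem \ref{teohomolyofHtohomologyofH} to transport every property from the finite-slope setting $HW^*(H)$, where the properties are either built into the construction or a consequence of finite-dimensionality, to the direct limit $HW^*(M)$.

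Property (i) is immediate, since direct limits preserve composition of the ``inclusion'' maps of each $HW^*(H)$, and property (v) is exactly the convention \eqref{conventionwrapa<0} with $s_0 = 0$. For left-semicontinuity (iv), one has $HW^a(H) = \lim_{\substack{\longrightarrow \\ a' < a}} HW^{a'}(H)$ for every semi-admissible $H$, a fact built into the definition of the filtered groups via \eqref{defwrapdeg}--\eqref{deferapdegs}, and since directed colimits commute this yields
\[
HW^a(M) = \lim_{\substack{\longrightarrow \\ H}} HW^a(H) = \lim_{\substack{\longrightarrow \\ a' < a}} HW^{a'}(M).
\]

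For the locally constant property (ii), I would take the spectrum to be $\mathcal{S} := S(\alpha, \Lambda_0 \rightarrow \Lambda_1) \cup \{0\}$, which is closed, bounded below and nowhere dense. Given $s < t$ in the same component of $\mathbb{R} \setminus \mathcal{S}$, I would choose a convex semi-admissible Hamiltonian $H$ with slope $T > t$ and $T \notin S(\alpha,\Lambda_0 \rightarrow \Lambda_1)$. By Theorem \ref{teohomolyofHtohomologyofH}, the map $\pi_{st}$ is conjugate to the inclusion $HW^{A_H(s)}(H) \rightarrow HW^{A_H(t)}(H)$. Because Hamiltonian chords of a convex semi-admissible $H$ on $[1,r_{\max}] \times \Sigma$ have action equal to $A_H$ applied to their length, while the chords lying in $M$ are intersection points of action $0$, the strictly increasing homeomorphism $A_H$ carries the portion of $\mathcal{S}$ below $T$ onto the spectrum of the action functional $\mathcal{A}_H$; hence the interval $[A_H(s), A_H(t)]$ contains no critical value of $\mathcal{A}_H$, so the inclusion is an isomorphism. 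For q-tameness (iii), the same factorization
\[
HW^s(M) \xrightarrow{\Phi^s_H} HW^{A_H(s)}(H) \longrightarrow HW^{A_H(t)}(H) \xrightarrow{(\Phi^t_H)^{-1}} HW^t(M)
\]
exhibits $\pi_{st}$ as passing through the finite-dimensional space $HW^{A_H(t)}(H)$, whose generators are in bijection with the Hamiltonian chords of a small non-degenerate perturbation of $H$ with action below $A_H(t)$, a finite set.

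The main obstacle I anticipate is the careful identification of the spectrum under $\Phi^a_H$ and the handling of the boundary cases where $T$ or the endpoints $A_H(s), A_H(t)$ happen to coincide with critical values of $\mathcal{A}_H$; this can be resolved by invoking Remark \ref{perturbation} to enlarge the slope slightly or shift the filtration level to a nearby regular value, together with the naturality diagrams of Theorem \ref{teohomolyofHtohomologyofH} which guarantee that the outcome does not depend on these auxiliary choices.
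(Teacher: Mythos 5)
Your proposal is correct and takes essentially the same route as the paper: the corollary is obtained there precisely by transporting the persistence-module structure of $a \rightarrow HW^a(H)$ for semi-admissible $H$ through the natural isomorphisms of Theorem \ref{teohomolyofHtohomologyofH}, with spectrum $\mathcal{S}(\alpha,\Lambda_0 \rightarrow \Lambda_1)\cup\{0\}$. Your axiom-by-axiom verification (including the perturbation of slope and filtration level via Remark \ref{perturbation}) simply makes explicit the details the paper leaves implicit.
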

In order to prove Theorem \ref{teohomolyofHtohomologyofH}, we will we will make use of the following two lemmas:
\begin{lemma} \label{lemmawrapusingHamiltonians}
    Let $H$ be any semi-admissible Hamiltonian. Then we have 
    \begin{equation} \label{lemmadef}
        HW^a(M,L_0 \rightarrow L_1) = \lim_{\begin{smallmatrix}
        \longrightarrow \\ s \rightarrow \infty
    \end{smallmatrix}}HW^a(sH),
    \end{equation}
    for any $a \in \mathbb{R}$.
\end{lemma}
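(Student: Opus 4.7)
The proof is a cofinality argument for the direct limit \eqref{defwrap} defining $HW^a(M,L_0 \rightarrow L_1)$. The plan is to introduce an auxiliary family of admissible Hamiltonians, indexed by two parameters $(s,\delta)$, which is cofinal among admissibles and whose colimit collapses, upon iteration, to $\lim_{s \to \infty} HW^a(sH)$.

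Concretely, I would work with $K_{s,\delta} := sH - \delta$ for $s,\delta > 0$. We may assume $H \geq 0$ on $\widehat{M}$ (replacing $H$ by a constant shift if necessary does not affect the statement). Then each $K_{s,\delta}$ is convex admissible: $K_{s,\delta}|_M = -\delta < 0$, convex on $[1, r_{\max}(H)]$, and linear at infinity with slope $sT$, where $T = \mathrm{slope}(H)$. Moreover, for $s \leq s'$ and $\delta \geq \delta'$ one has $K_{s,\delta} \leq K_{s',\delta'}$, so the family is directed. The cofinality claim is that, given any convex admissible $K$ with $K|_M = c_K < 0$ and slope $T_K$, picking $\delta \in (0, |c_K|]$ together with $s$ large enough (so that $sT > T_K$ and $sH$ dominates $K$ on the bounded cone region) yields $K \leq K_{s,\delta}$ pointwise. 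Since convex admissibles are cofinal among all admissibles, this gives
\begin{equation*}
HW^a(M,L_0 \rightarrow L_1) = \lim_{(s,\delta)} HW^a(K_{s,\delta}).
\end{equation*}

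Next, I would exploit the constant shift: $K_{s,\delta}$ and $sH$ have the same Hamiltonian vector field, so their chord sets coincide, while the action functionals differ by $\mathcal{A}_{K_{s,\delta}}(\gamma) = \mathcal{A}_{sH}(\gamma) + \delta$. Hence at the chain level $HW^a(K_{s,\delta}) = HW^{a-\delta}(sH)$, and the structural maps among the $K_{s,\delta}$ translate into the combined persistence/continuation maps on the $\{sH\}$ side. Evaluating the resulting double limit as an iterated one, left-semicontinuity of the persistence module $a' \mapsto HW^{a'}(sH)$ (property (iv) of Section \ref{sectionpersistencemodules}) gives, for fixed $s$, $\lim_{\delta \to 0^+} HW^{a-\delta}(sH) = HW^a(sH)$; the outer limit $s \to \infty$ then delivers the asserted identity \eqref{lemmadef}. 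For $a \leq 0$ both sides vanish: the left by convention \eqref{conventionwrapa<0} and the right because all critical values of $\mathcal{A}_{sH}$ are non-negative.

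The main obstacle I anticipate is the mismatch between the strict inequality $K < 0$ on $M$ required for admissibility and the equality $sH|_M \equiv 0$ for the semi-admissible family, which prevents the subfamily $\{sH\}_s$ from being a directed subsystem of admissibles directly; introducing the slack parameter $\delta > 0$ and absorbing it via left-semicontinuity is the mechanism that bridges the two regimes.
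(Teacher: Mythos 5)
Your proposal is correct and follows essentially the same route as the paper: the paper also uses the shifted family $H_i = s_iH - \epsilon_i$ as a cofinal family of admissible Hamiltonians, together with the action-shift identity $HW^a(sH-\delta)=HW^{a-\delta}(sH)$. The only difference is bookkeeping in the final step: you organize $(s,\delta)$ as a double direct limit and let left-semicontinuity absorb $\delta\to 0^+$, whereas the paper picks sequences $s_i\to\infty$, $\epsilon_i\to 0^+$ (diagonally, with $\epsilon_i<\delta_i$ when $a\in\mathcal{S}(\alpha,\Lambda_0\rightarrow\Lambda_1)$) and invokes local constancy via Remark \ref{perturbation} — the substance is the same.
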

\begin{lemma} \label{lemmainvariance}
    Let $H_s$, with $s\in [0,1]$, be a homotopy of Hamiltonians such that all Hamiltonians $H_s$ have the same slope, and let $I_s$ of intervals continuously depending on $s$ such that for all $s$, the end points of $I_s$ are outside $\mathcal{S}(H_s)$. Then there is a natural isomorphism
    $$HW^{I_0}(H_0) \xrightarrow{\cong} HW^{I_1}(H_1).$$
\end{lemma}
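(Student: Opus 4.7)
The plan is to construct the isomorphism by composing local continuation isomorphisms along the path $s \mapsto (H_s, I_s)$ and then invoking compactness of $[0,1]$. The key structural input is that all the $H_s$ share the same slope at infinity, so each difference $H_s - H_{s'}$ is compactly supported. This yields a uniform maximum principle confining continuation strips to a common compact subset of $\widehat{M}$, and lets the energy identity \eqref{energycontinuation} deliver sharp bounds on action shifts.

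I would first establish the local statement: for every $s_0 \in [0,1]$ there exist a neighborhood $U \ni s_0$ and $\delta > 0$ such that for each $s \in U$, (i) $\|H_s - H_{s_0}\|_{C^0(\widehat{M})} < \delta/2$, (ii) the endpoints of $I_s$ lie within $\delta/2$ of those of $I_{s_0}$, and (iii) writing $J^{+\eta}$ for the $\eta$-enlargement of $J$ at both endpoints, the intervals $I_{s_0}^{+\delta}$ and $I_s^{+\delta}$ still have endpoints avoiding $\mathcal{S}(H_{s_0})$ and $\mathcal{S}(H_s)$, respectively. This is possible because $\mathcal{S}(H_s)$ depends continuously on $s$ and, by hypothesis, is disjoint from $\partial I_s$ for every $s$.

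For such $s \in U$, the linear interpolation $K_\tau = (1-\tau) H_{s_0} + \tau H_s$ together with a fixed admissible $J$ yields continuation maps in both directions. By \eqref{energycontinuation}, each shifts action by at most $\|H_s - H_{s_0}\|_{C^0} < \delta/2$, giving filtered maps
$$\Phi_+ : HW^{I_{s_0}}(H_{s_0}) \to HW^{I_{s_0}^{+\delta/2}}(H_s), \qquad \Phi_- : HW^{I_{s_0}^{+\delta/2}}(H_s) \to HW^{I_{s_0}^{+\delta}}(H_{s_0}).$$
By Lemma \ref{conpro}, $\Phi_- \circ \Phi_+$ is chain-homotopic to the ``inclusion'' $HW^{I_{s_0}}(H_{s_0}) \to HW^{I_{s_0}^{+\delta}}(H_{s_0})$, which by (iii) is an isomorphism since no spectrum point lies in the enlarged interval. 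Symmetrically, $\Phi_+ \circ \Phi_-$ is an isomorphism of inclusion type on the $H_s$-side, so both $\Phi_\pm$ are isomorphisms. Conditions (ii)--(iii) then let me identify $HW^{I_{s_0}^{+\delta/2}}(H_s) \cong HW^{I_s}(H_s)$ via another spectrum-free inclusion isomorphism.

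Compactness of $[0,1]$ gives a finite subcover $U_1, \dots, U_N$, and composing the local isomorphisms produces $HW^{I_0}(H_0) \cong HW^{I_1}(H_1)$; independence from choices of subcover and of interpolation is a standard chain-homotopy argument through Lemma \ref{conpro}(1). The main technical obstacle is the joint bookkeeping between the moving spectra $\mathcal{S}(H_s)$ and the moving endpoints $\partial I_s$: one must separate them simultaneously on a neighborhood of each $s_0$ so that the sandwich really reduces to ``inclusion''-type isomorphisms. The pointwise hypothesis $\partial I_s \cap \mathcal{S}(H_s) = \emptyset$ for \emph{every} $s$, combined with continuity, is precisely what makes this simultaneous separation possible.
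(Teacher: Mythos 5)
Your argument is correct in outline, and it reaches the same destination by a slightly different local mechanism than the paper. The paper's proof first reduces to two special cases -- Hamiltonian fixed with moving interval, and interval fixed with moving Hamiltonian -- by subdividing $[0,1]$ and concatenating; the first case is handled by the spectrum-gap/"inclusion" isomorphism and the second by choosing a single non-degenerate perturbation $\Tilde{H}$ that computes $HW^I(H_s)$ for all nearby $s$ at once (i.e., by invoking Remark \ref{perturbation} as a black box). You instead treat both variations simultaneously at each $s_0$ via a two-sided continuation sandwich with action shifts bounded by $\|H_s-H_{s_0}\|_{C^0}$, which is essentially the interleaving mechanism already used in Proposition \ref{propostiondistances}; this buys a more explicit and quantitative local statement at the cost of re-deriving what the paper cites. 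Two points to tighten: (a) the continuation maps in the paper are only set up for monotone homotopies, and $H_{s_0}$, $H_s$ need not be comparable, so you should either run the monotone trick of Proposition \ref{propostiondistances} (homotope $H_{s_0}\to H_s+\delta/2$ and $H_s\to H_{s_0}+\delta/2$) or justify the energy bound and the confinement of continuation strips for non-monotone homotopies directly -- note that equal slopes make $H_s-H_{s_0}$ only \emph{constant} at infinity, not compactly supported, which still suffices for the maximum principle but is not what you wrote; (b) your condition (iii) asks only that the \emph{endpoints} of the enlarged intervals avoid the spectra, whereas the sandwich needs the entire $\delta$-collars between $\partial I_{s_0}$ and $\partial I_{s_0}^{+\delta}$ (and likewise for $H_s$) to be spectrum-free; this is arranged by shrinking $\delta$, using that the action spectra are closed, nowhere dense, and vary outer-semicontinuously in $s$ (limits of critical chords of $H_s$ are critical chords of $H_{s_0}$, since all relevant chords lie in a fixed compact region). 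With these adjustments your compactness-and-composition argument goes through and gives the same naturality as in the paper.
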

\begin{comment}
    \begin{corollary}
    For any admissible Hamiltonian $H$ with $T =slope(H)$, then
    $$HW^T(M,L_0 \rightarrow L_1) = HW(H,L_0 \rightarrow L_1).$$
    Moreover, whenever $H'\leq H$ are semi-admissible with $T' = slope(H')$, the diagram 
    \begin{center}
        \begin{tikzcd}[row sep=large, column sep=large]
        HW^{T'}(M,L_0 \rightarrow L_1) \arrow[d] \arrow[r,"\cong"] & HW(H',L_0 \rightarrow L_1) \arrow[d] \\
        HW^T(M,L_0 \rightarrow L_1) \arrow[r,"\cong"] & HW(H,L_0 \rightarrow L_1)
        \end{tikzcd}
    \end{center}
    commutes, where the left vertical arrow represents the structure or "inclusion" map, and the right vertical arrow represents the continuation map.
\end{corollary}
\end{comment}
\begin{proof}[Proof of Lemma \ref{lemmawrapusingHamiltonians}]
    The lemma follows basically from Remark \ref{perturbation} and the definitions. When $a<0$, then it follows directly from the \eqref{conventionwrapa<0}. Now we assume $a>0$.
    Clearly, in \eqref{lemmadef}, we can replace the direct limit as $s\rightarrow \infty$ by the direct limit over any monotone increasing sequence $s_i \rightarrow \infty$, and the limit is independent of the sequence. Now, fix any such a sequence and any monotone non-decreasing sequence $\epsilon_i \rightarrow 0^+$. Then $H_i = s_iH - \epsilon_i$ forms is a cofinal sequence, which we can use to obtain \eqref{defwrap}. On the other hand,
    $$HW^a(H_i) = HW^{a-\epsilon}(s_iH).$$
    
    Assume first that $a\notin S(\alpha,L_0 \rightarrow L_1)$. Then $a-\epsilon \notin \mathcal{S}(\alpha,L_0 \rightarrow L_1)$ for all large $i$, $a-\epsilon$ and $a$ are in the same connected component of the complement to $\mathcal{S}(H)$ (this is the point where it is essential that $a\neq 0$). Therefore, from Remark \ref{perturbation}, we get
    $$HW^{a-\epsilon}(s_iH) = HW^a(s_iH),$$
    and the statement follows by passing to the limit as $i \rightarrow \infty$.
    Now, assume that $a \in \mathcal{S}(\alpha,L_0 \rightarrow L_1)$. Consider a sequence $s_i \rightarrow \infty$ so that $a \notin S(s_iH)$. then 
    $$HW^a(s_iH) = HW^t(s_iH),$$
    for all $t\in [a-\delta_i,a+\delta_i]$, for some $\delta_i$ depending on $i$. Now choose a sequence $\epsilon_i \rightarrow 0^+$ with $\epsilon_i < \delta_i$. Then, by Remark \ref{perturbation}
    $$HW^a(s_iH)= HW^{a-\epsilon_i}(s_iH) = HW^a(H_i),$$    
    for all large $i$. Passing to the limit as $i \rightarrow \infty$, we obtain \eqref{lemmadef}.
    \end{proof}

\begin{proof}[Proof of Lemma \ref{lemmainvariance}]
    First, notice that it is enough to prove that lemma assuming that either the intervals or the Hamiltonian is independent of $s$. Then, we partition the interval $[0,1]$ into sufficiently small intervals obtaining a concatenation of homotopies, and we conclude the result. For a fixed Hamiltonian $H$ and varying intervals, the result follows from taking a sufficiently small non-degenerate perturbation $\Tilde{H}$ for which Remark \ref{perturbation} holds for the end points of the intervals $I_s$. For a fixed interval and vary Hamiltonian, assuming that all Hamiltonians $H_s$ are $C^{\infty}$ close to each other, we take a perturbation $\Tilde{H}$ that is suitable for all the Hamiltonians on the interval $I_s = I$, or more precisely, $HW^I(\Tilde{H}) = HW^I(H_s)$, for all $s$.
\end{proof}
Let $H_0 \leq H_1$ be two semi-admissible Hamiltonians. We will assume that $r_{\max}(H_0) = r_{\max}(H_0) = r_{\max}$ (this condition could be replaced by $r_{\max}(H_1) \leq r_{\max}(H_0)$ with careful wording modification). We can consider
$$f = f_{H_0,H_1} = A_{H_1} \circ A_{H_0}^{-1} : [0,A_{H_0}(r_{\max})] \rightarrow [0,A_{H_1}(r_{\max})].$$
Notice that $f$ is a monotone increasing function since $A_{H_0}$ and $A_{H_1}$ are (it is a monotone increasing bijection if $slope(H_0) =slope(H_1)$) and $f(t) \leq t$, for all $t$, since $A_{H_1} \leq A_{H_0}$ for $H_0 \leq H_1$. Furthermore, it is not hard to see that 
\begin{equation} \label{specH_0toH_1}
    f(\mathcal{S}(H_0)) = f([0,A_{H_0}(r_{\max})]) \cap \mathcal{S}(H_1),
\end{equation}
i.e., $f$ gives rise to a one-to-one correspondence between the action spectra as long as target in the range of $f$.
    \begin{proposition} \label{propositiotoprovetheorem}
        For all $a < A_{H_0}(r_{\max})$, not in $\mathcal{S}(H_0)$, there are isomorphisms of the Floer homology groups
        \begin{equation} \label{isoH_0H_1}
            HW^a(H_0) \xrightarrow{\cong} HW^{f(a)}(H_1).
        \end{equation}
        These isomorphisms are natural in the sense that they commute with "inclusion" maps and monotone homotopies.
    \end{proposition}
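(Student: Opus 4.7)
My plan is to deduce the proposition from Lemma \ref{lemmainvariance} by constructing an explicit homotopy from $H_0$ to $H_1$ together with a continuously varying family of action levels that avoids the spectrum throughout. The assumption $a > 0$ (otherwise both sides vanish tautologically) is implicit in what follows.

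The first step is to normalize the slopes. I would modify $H_0$ and $H_1$ outside $M_{r_{\max}}$, keeping them convex semi-admissible and preserving $H_0 \leq H_1$, so that both acquire a common slope $T' \geq \max(\mathrm{slope}(H_0),\mathrm{slope}(H_1))$ with $T' \notin \mathcal{S}(\alpha,\Lambda_0 \rightarrow \Lambda_1)$. The hypothesis $a < A_{H_0}(r_{\max})$ forces every chord of $H_0$ with action below $a$ to lie in $M_{r_{\max}}$; analogously every chord of $H_1$ with action below $f(a)$ lies in $M_{r_{\max}}$. Hence this modification does not alter the filtered groups $HW^a(H_0)$ and $HW^{f(a)}(H_1)$. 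After the modification, take the linear interpolation $H_s = (1-s)H_0 + sH_1$, a family of convex semi-admissible Hamiltonians sharing the common $r_{\max}$ and the common slope $T'$.

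Second, I would exploit the explicit description of the spectrum. The Hamiltonian chords of a convex semi-admissible Hamiltonian from $\widehat{L}_0$ to $\widehat{L}_1$ consist of the Lagrangian intersections inside $M$ (all of action $0$) together with the chords at levels $r \in (1,r_{\max}]$ which correspond, via $t = h'(r)$, to Reeb chords of length $t$ between $\Lambda_0$ and $\Lambda_1$. Consequently
\begin{equation*}
\mathcal{S}(H_s) = \{0\} \cup A_{H_s}\!\left(\mathcal{S}(\alpha,\Lambda_0 \rightarrow \Lambda_1) \cap [0,T']\right).
\end{equation*}
Let $t_0 = A_{H_0}^{-1}(a)$; since $a \notin \mathcal{S}(H_0)$ and $a > 0$, $t_0$ lies in $(0,T_0)$ and is not in $\mathcal{S}(\alpha,\Lambda_0 \rightarrow \Lambda_1)$. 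Setting $a_s := A_{H_s}(t_0)$, the function $s \mapsto a_s$ is continuous with $a_0 = a$ and $a_1 = A_{H_1}(t_0) = f(a)$, and $a_s \notin \mathcal{S}(H_s)$ for every $s \in [0,1]$ by the spectral description above. Applying Lemma \ref{lemmainvariance} to $(H_s,I_s = (-\infty,a_s))$ produces the desired natural isomorphism $HW^a(H_0) \xrightarrow{\cong} HW^{f(a)}(H_1)$.

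Naturality with respect to the inclusion $HW^a(H_0) \to HW^{a'}(H_0)$ for $a < a'$ (both satisfying the hypotheses) is obtained by running the construction simultaneously for both levels along the same homotopy $H_s$, since Lemma \ref{lemmainvariance} intertwines with the chain-level inclusions between nested filtration intervals. Naturality under a monotone continuation map between two compatible pairs is handled by a standard two-parameter version of the same argument, invoking the functoriality of continuation maps under homotopies of homotopies. The principal technical point is the verification that the spectrum $\mathcal{S}(H_s)$ never crosses the moving action level $a_s$; this is precisely what fixing a single Reeb chord length $t_0$ outside $\mathcal{S}(\alpha,\Lambda_0 \rightarrow \Lambda_1)$ and tracking its reparametrized action $A_{H_s}(t_0)$ along the homotopy accomplishes.
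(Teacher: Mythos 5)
Your argument is correct, and its core is the same as the paper's: you feed the homotopy $H_s$ and the moving action level $a_s=A_{H_s}(t_0)$, with $t_0=A_{H_0}^{-1}(a)\notin\mathcal{S}(\alpha,\Lambda_0\rightarrow\Lambda_1)$, into Lemma \ref{lemmainvariance}; this is literally the paper's family $f_s(a)$ and the same spectral-gap observation \eqref{specH_0toH_1}. Where you genuinely diverge is in handling the unequal slopes: the paper inserts an intermediate Hamiltonian $H_{01}$ (equal to $H_0$ on $[1,r_0]$, with $slope(H_{01})=slope(H_1)$) and gets $HW^a(H_0)\cong HW^a(H_{01})$ from a monotone continuation map confined by the maximum principle, whereas you modify both Hamiltonians beyond $r_{\max}$ to a common non-spectral slope $T'$ and interpolate linearly. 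Your route is slightly cleaner in that a single application of Lemma \ref{lemmainvariance} suffices, but note that the justification "every chord of action below the level lies in $M_{r_{\max}}$, hence the filtered groups are unchanged" is incomplete as stated: you must also confine the Floer strips (and the continuation/perturbation data entering the definition of $HW$ for degenerate Hamiltonians) to $M_{r_{\max}}$, so that the filtered complexes themselves, not merely their homologies, are untouched by the modification; this follows from the same maximum principle for radial Hamiltonians and cylindrical $J$ that the paper invokes for \eqref{isoH_0H_01}, and having the complexes literally coincide below the level is also what makes your naturality claims go through. Two cosmetic points: after modification the interpolation $h_s$ need only satisfy $h_s''\geq 0$ where one of the pieces is linear, but $A_{h_s}$ is still a continuous strictly increasing function of the length (as in \eqref{monotonicityofreparamentrization}), so nothing is lost; and one should arrange the extensions beyond $r_{\max}$ so that $H_0\leq H_1$ persists, which is straightforward since $slope(H_0)\leq slope(H_1)$.
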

    \begin{proof}
Let $r_0 \in [1,r_{\max})$ be uniquely determined by $A_{H_0}(r_0) = a$. Consider an intermediate semi-admissible Hamiltonian $H_{01}$ with $r_{\max}(H_{01})=r_{\max} = r_{\max}(H_0)=r_{\max}(H_1)$ such that
\begin{itemize}
    \item $H_0 \leq H_{01} \leq H_1$,
    \item $H_01 = H_0$ in $[1,r_0],$
    \item $slope(H_{01}) = slope(H_1)$.
\end{itemize}
Note that with this choice $a \notin \mathcal{S}(H_{01})$. 

Then a monotone increasing homotopy from $H_0$ to $H_1$ induces a isomorphism 
\begin{equation} \label{isoH_0H_01}
    HW^a(H_0) \xrightarrow{\cong} HW^a(H_{01}).
\end{equation}
This follows since by the maximal principal since all Floer cylinders from $H_0$ to $H_{01}$ and Floer continuation trajectories starting at $1$-periodic orbits with action less than or equal $a$ lie in the region $M_{r_0}$.

Now consider $H_s$ be a linear monotone increasing homotopy from $H_{01}$ to $H_1$. Denote by $f_s = A_{H_s} \circ A_{H_0}^{-1}$ the resulting family of maps, and by \eqref{specH_0toH_1}, note that $f_s(a) \notin \mathcal{S}(H_s)$ since $a \notin \mathcal{S}(H_0)$. Applying the Lemma \ref{lemmainvariance} to the family of intervals $I_s=(-\infty,f_s(a)]$ we obtain an isomorphism 
\begin{equation} \label{isoH_01H_1}
    HW(H_{01}) \xrightarrow{\cong} HW^{f_1(a)}(H_1) = HW^{f(a)}(H_1),
\end{equation}
where in the last identity we have used the fact that $f = f_{H_{01},H_1}= f_1$ on $[0,a]$ since $H_{01} = H_1$ on $[0,r_0]$. The isomorphism \eqref{isoH_01H_1} now follows from composing the isomorphisms \eqref{isoH_0H_01} and \eqref{isoH_01H_1}. The assertion that the isomorphisms are natural follow from the construction of the isomorphisms \eqref{isoH_0H_01} and \eqref{isoH_01H_1}.
\end{proof}

\begin{proof}[Proof of theorem \ref{teohomolyofHtohomologyofH}] Consider the family of Hamiltonians $s \rightarrow sH$ and let $f_s = f_{H,H_s}$. From Proposition \ref{propositiotoprovetheorem} we have isomorphisms
$$HW^a(H) \xrightarrow{\cong} HW^{f_s(a)}(sH).$$

Recall, $A_{sH}(t) \rightarrow t$ as $s \rightarrow \infty$ for all $t$ (see \eqref{A_H(t)convergetot}), and we conclude that $f_{s}(a) \rightarrow A_{H_0}^{-1}(a)$ as $s \rightarrow \infty$. Then passing to the limit as $s \rightarrow \infty$ and applying Lemma \ref{lemmawrapusingHamiltonians}, we obtain the inverse of the isomorphism \eqref{isowraptohamilton}. From the constructions in Lemma \ref{lemmawrapusingHamiltonians} and Proposition \ref{propositiotoprovetheorem}, the isomorphisms are natural, i.e., the diagrams in \ref{teohomolyofHtohomologyofH} commute.
\end{proof}
We denote the barcode and the number of not-too-short bars for the persistence module \( a \rightarrow HW^a(H, L_0 \rightarrow L_1) \) for some (semi-)admissible Hamiltonian by \( B(H, L_0 \rightarrow L_1) \) and \( b_{\epsilon}(H, L_0 \rightarrow L_1) \), respectively. Similarly, for the persistence module \( a \rightarrow HW^a(M, L_0 \rightarrow L_1) \), we denote the barcode and the number of not-too-short bars by \( B(M, L_0 \rightarrow L_1) \) and \( b_{\epsilon}(M, L_0 \rightarrow L_1) \), respectively. One should expect similar simplifications on the notation for the other operations with persistence modules and barcodes in the setting of wrapped Floer homology. When the Lagrangians are well understood, we may suppress it from the notation. 

Assume $\varphi:\widehat{M} \rightarrow \widehat{M}$ is a compactly supported Hamiltonian diffeomorphism, i.e., $\varphi = \varphi_H$ for a compactly supported Hamiltonian $H: \mathcal{S}^1 \times \widehat{M} \rightarrow \mathbb{R}$. We define the Hofer norm of $\varphi$ by
$$ ||\varphi|| = \inf_{H \in \mathcal{D}_{\varphi}} \int_{0}^1 (\sup_{x\in \widehat{M}} H(t,x) - \inf_{x \in \widehat{M}} H(t,x)) dt,$$ 
where the infimum is taken over the set $\mathcal{D}_{\varphi}$ of compactly supported Hamiltonians $H:S^1 \times \widehat{M} \rightarrow \mathbb{R}$ with $\varphi = \varphi_H$. 

For a pair of Hamiltonian isotopic Lagrangians $\widehat{L_0}$ and $\widehat{L_1}$ in $\widehat{M}$ that admit a compactly supported Hamiltonian isotopy (in this case all the Lagrangians along the isotopy coincide on the cylindrical end), we define the Hofer distance between them to be
$$d_H(L_0,L_1) = \inf\{||\varphi_{H}|| \mid \varphi_H(\widehat{L_0}) = \widehat{L_1}\}.$$

We notice that the barcode is reasonably insensitive to small perturbation of the Lagrangians on the wrapped Floer homology with respect to the Hofer norm. More precisely, for two exact asymptotically conical Lagrangians $L_0$ and $L_1$ in $M$ with $L_0\pitchfork L_1$, and a Hamiltonian $H$, we have the following: For any compactly supported Hamiltonian perturbation $L'$ of $\widehat{L_1}$ with $d_H(\widehat{L_1},L')< \frac{\delta}{2}$, and $\widehat{L_0} \pitchfork L'$ in $M$, we have (we abuse the notation and keep denoting $L' \cap M$ by $L'$)
\begin{equation} \label{barcodeofpertubation}
    b_{\epsilon + \delta}(H, L_0 \rightarrow L_1) \leq b_{\epsilon}(H,L_0 \rightarrow L') \leq b_{\epsilon - \delta}(H,L_0 \rightarrow L_1).
\end{equation}
The proof follows from Remark \ref{wrappedtolagrangian}, and the result for the Lagrangian Floer homology case. We refer to \cite{cineli2021topological,mailhot2022spectral,polterovich2020topological,usher2016persistent}, and references therein.
\begin{remark} \label{intersectionoboundingnottooshortbars} 
    For a pair of exact asymptotically conical Lagrangians $L_0$ and $L_1$ in $M$, and a (semi-)admissible non-degenerate Hamiltonian $H$, we notice that
    \begin{align*}
      |\phi_{H}(L_0) \cap L_1| = |\mathrm{dim}(CW(H,L_0 \rightarrow L_1))| = 2b(H,L_0 \rightarrow L_1) - \mathrm{dim}(HW(H,L_0 \rightarrow L_1)),
    \end{align*}
    and therefore,
    $$|\phi_{H}(L_0) \cap L_1| \geq b(H,L_0 \rightarrow L_1) \geq b_{\epsilon}(H,L_0 \rightarrow L_1),$$
    for any $\epsilon>0$.
\end{remark}
\begin{proposition} \label{propostiondistances}
For $H,K: \widehat{M} \rightarrow \mathbb{R}$ non-degenerate Hamiltonians with $r_{\max}(H),r_{\max}(K)\leq r_{\max}$, then
      \begin{equation} \label{botboundedbyabovebycinfitynorm}
          d_{bot}(B(H),B(K)) \leq||H - K||_{\infty},r_{\max}.
      \end{equation} 
\end{proposition}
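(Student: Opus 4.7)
The plan is to deduce the bound from the Isometry Theorem (Theorem \ref{theoremisopersistence}), which reduces the estimate on $d_{bot}$ to producing a $c$-interleaving between the persistence modules $\{HW^a(H)\}$ and $\{HW^a(K)\}$ with $c = \|H-K\|_{\infty}\cdot r_{\max}$. Concretely, I would construct continuation-type maps
\[
\Phi^a : HW^a(H) \longrightarrow HW^{a+c}(K), \qquad \Psi^a : HW^a(K) \longrightarrow HW^{a+c}(H),
\]
using the linear interpolation homotopy $H_s = (1-\chi(s))H + \chi(s)K$ (and its time-reverse), where $\chi:\mathbb{R}\to[0,1]$ is a smooth monotone cutoff with $\int \chi'\,ds = 1$, paired with an admissible homotopy of almost complex structures.

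The quantitative core of the argument is the energy identity from \eqref{energycontinuation},
\[
E(u) \;=\; \mathcal{A}_H(x_-) - \mathcal{A}_K(x_+) - \int_{-\infty}^{\infty}\!\!\int_0^1 \chi'(s)(K-H)(u(s,t))\,dt\,ds,
\]
combined with the maximum principle, which forces every continuation strip $u$ to stay inside the compact region $M_{r_{\max}}$. Since $E(u)\geq 0$, the right-hand side is nonnegative; bounding the integral by $\|K-H\|_{\infty}\cdot r_{\max}$ on $M_{r_{\max}}$ (the factor $r_{\max}$ appearing through the geometry of $\lambda = r\alpha$ on the conical end, under the assumption $r_{\max}(H), r_{\max}(K) \leq r_{\max}$) yields the crucial action shift estimate
\[
\mathcal{A}_K(x_+) \;\leq\; \mathcal{A}_H(x_-) + c.
\]
Counting rigid trajectories then defines $\Phi^a$ at the chain level, and it descends to homology by the standard argument. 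The reverse homotopy, with an identical estimate, provides $\Psi^a$.

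Finally, the interleaving identities $\Psi^{a+c}\circ\Phi^a = i_H^{a,a+2c}$ and $\Phi^{a+c}\circ\Psi^a = i_K^{a,a+2c}$ follow from parts \emph{(1)} and \emph{(3)} of Lemma \ref{conpro}: the concatenation of the two interpolation homotopies can be joined by a one-parameter family of admissible homotopies to the constant one, and homotopies of homotopies induce chain homotopies, giving the inclusion maps on homology. The main obstacle, in my view, is the max-principle/compactness step for the non-monotone homotopy $H_s$, which is delicate when $H$ and $K$ need not share a common slope at infinity; one must engineer the homotopy (possibly inserting an intermediate Hamiltonian as in the proof of Proposition \ref{propositiotoprovetheorem}) so that the continuation moduli spaces remain confined to $M_{r_{\max}}$ and the integral $\int \chi'(K-H)(u)\,dt\,ds$ is genuinely controlled by $\|H-K\|_{\infty}\cdot r_{\max}$. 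Once this compactness is secured, the Isometry Theorem delivers $d_{bot}(B(H),B(K)) \leq d_{int} \leq c$, finishing the proof.
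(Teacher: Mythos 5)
Your overall strategy---produce a $\delta$-interleaving of the persistence modules and invoke the Isometry Theorem \ref{theoremisopersistence}---is the same as the paper's, but your implementation has a genuine gap, and you have also misread the constant. The bound in the proposition is $||H-K||_{\infty,r_{\max}}$, the $C^0$-norm of $H-K$ on the region $M_{r_{\max}}$ (the comma in the displayed statement is a typo), not the product $||H-K||_{\infty}\cdot r_{\max}$. The energy identity \eqref{energycontinuation} produces no factor of $r_{\max}$; your appeal to ``the geometry of $\lambda=r\alpha$'' is not a real mechanism (the factor $r_{\max}$ in this paper enters elsewhere, in the bilipschitz estimate \eqref{bilipschitz} for $A_H$). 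Note also that when $slope(H)\neq slope(K)$ the global sup-norm $||H-K||_{\infty}$ is infinite, which is exactly why the norm must be localized to $M_{r_{\max}}$.

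The more serious problem is the construction of $\Phi^a$ and $\Psi^a$ from the non-monotone interpolation $H_s=(1-\chi(s))H+\chi(s)K$. The confinement result available in this setting (and quoted in the paper) holds for non-decreasing homotopies; for a non-monotone homotopy between Hamiltonians of different slopes at infinity, there is no a priori maximum principle keeping the continuation strips inside $M_{r_{\max}}$, and without that confinement the term $\int\chi'(s)(K-H)(u)\,dt\,ds$ is not controlled by $||H-K||_{\infty,r_{\max}}$, since $K-H$ can be arbitrarily large for $r>r_{\max}$. You flag this yourself as ``the main obstacle'' but leave it unresolved---and it is precisely the step the paper's proof is designed to avoid. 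The paper instead uses the shift identity $HW^a(H+c)=HW^{a+c}(H)=HW^a(H)[c]$ (same generators, same Floer equation, action shifted by $c$) and then only the \emph{monotone} continuation maps $\mathcal{X}^{H\rightarrow K+\delta}$ and $\mathcal{X}^{K\rightarrow H+\delta}$ with $\delta=||H-K||_{\infty,r_{\max}}$, whose existence, composition law and normalization are already supplied by Lemma \ref{conpro}; the two interleaving triangles then commute and the Isometry Theorem gives \eqref{botboundedbyabovebycinfitynorm}. To salvage your route you would either have to restrict to Hamiltonians that agree at infinity (as in the application in the proof of Theorem \ref{thm:independence}) or reintroduce the shift-plus-monotone-continuation trick, at which point you have reproduced the paper's argument.
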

\begin{proof}
    We first notice that for a non-degenerate Hamiltonian $H$, the chain complexes $CW(H)$ and $CW(H+c)$ have the same generators since $X_{H} = X_{H+c}$, and the actions of a generator $x \in Crit(H,L_0 \rightarrow L_1)$ are related by $\mathcal{A}_{H+c}(x) = \mathcal{A}_{H}(x)-c$. Moreover, the Floer equations \eqref{Floer equation} for both Hamiltonian coincide, so we conclude that 
    \begin{equation} \label{translationHamiltonian}
        HW^a(H+c) = HW^{a+c}(H) = HW^a(H)[c].
    \end{equation}
    From the definition of wrapped Floer homology for a Hamiltonian $H$, we conclude that \eqref{translationHamiltonian} still holds.   
   
   Now let $\delta = ||H-K||_{\infty,r_{\max}}$. Then
    $$H \leq K + \delta, \ \text{and} \  K \leq H+\delta \ \text{in} \ M_{r_{\max}},$$
    and by the continuation maps and the first item, we get the maps
    $$\Phi= \mathcal{X}^{H \rightarrow K + \delta}_a : HW^a(H) \rightarrow HW^a(K)[\delta],$$
    and
    $$\Psi = \mathcal{X}^{K \rightarrow H + \delta}_a : HW^a(K) \rightarrow HW^a(H)[\delta],$$
    such that the diagrams
    \begin{center}
        \begin{tikzcd}[row sep=large, column sep=large]
            HW^a(H) \arrow[r, "\Phi"]  \arrow[rr, "i^{a, a+ \delta}_H" ,bend right=20 ] & HW^a(K)[\delta] \arrow[r, "{\Psi[\delta]}"] & HW^a(H)[2\delta],
        \end{tikzcd}
    \end{center}
    and
    \begin{center}
        \begin{tikzcd}[row sep=large, column sep=large]
            HW^a(K) \arrow[r, "\Psi"]  \arrow[rr, "i^{a,a + \delta}_K" ,bend right=20 ] & HW^a(H)[\delta] \arrow[r, "{\Phi[\delta]}"] & HW^a(K)[2\delta],
        \end{tikzcd}
    \end{center}
    commute for all $a \in \mathbb{R}$, which proves that 
    $$d_{int}(HW^a(H),HW^a(K)) \leq||H - K||_{\infty,r_{\max}}.$$
    From the isometry Theorem \ref{theoremisopersistence}, we conclude \eqref{botboundedbyabovebycinfitynorm}. 
\end{proof}
\begin{corollary}
    For any semi-admissible Hamiltonian $H$ in $\widehat{M}$, the persistence module $V_s = HW^s(H)$ is moderate, i.e., $b_{\epsilon}(H)$ is finite for all $\epsilon>0$.
\end{corollary}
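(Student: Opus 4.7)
Fix $\epsilon > 0$; the goal is to show $b_\epsilon(H) < \infty$. The strategy is to $C^0$-approximate $H$ by a non-degenerate Hamiltonian $K$ whose barcode is manifestly finite, and then transfer that finiteness to $B(H)$ via Proposition \ref{propostiondistances} and the isometry theorem. Specifically, I would pick $\delta < \epsilon/2$ and construct a semi-admissible Hamiltonian $K$ that is non-degenerate for $L_0, L_1$, with $slope(K) \notin \mathcal{S}(\alpha, \Lambda_0 \rightarrow \Lambda_1)$, $r_{\max}(K) \leq r_{\max}(H)$, and $||H - K||_{\infty, r_{\max}} < \delta$. Such a $K$ exists because non-degeneracy can be achieved by an arbitrarily small $C^0$ perturbation inside the compact region $M_{r_{\max}}$ (equivalently by a small compactly supported Hamiltonian perturbation of $\widehat{L_0}$), and the condition $slope(K) \notin \mathcal{S}$ can be arranged by slightly adjusting the slope, using that $\mathcal{S}$ is closed and nowhere dense. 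Non-degeneracy makes $Crit(K, L_0 \rightarrow L_1)$ finite, so $CW(K, L_0 \rightarrow L_1)$ is finite-dimensional; hence $B(K)$ consists of finitely many bars and in particular $b_{\epsilon - 2\delta}(K) < \infty$.

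Next I would apply Proposition \ref{propostiondistances} to obtain
$$d_{bot}(B(H), B(K)) \leq ||H - K||_{\infty, r_{\max}} < \delta.$$
One small subtlety is that the proposition is stated for two non-degenerate Hamiltonians, whereas $H$ may be degenerate if $slope(H) \in \mathcal{S}(\alpha, \Lambda_0 \rightarrow \Lambda_1)$. However, using the direct-limit definition \eqref{defwrapdeg}--\eqref{deferapdegs} together with Remark \ref{perturbation}, the continuation-map interleaving used in the proof of Proposition \ref{propostiondistances} descends to the direct limit, yielding an interleaving of persistence modules between $HW^a(H)$ and $HW^a(K)$ with the same parameter $||H-K||_{\infty, r_{\max}}$, so the isometry theorem still applies.

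Finally, I would interpret the bottleneck bound as a matching: unmatched bars have length at most $2\delta < \epsilon$, so every bar of $B(H)$ of length at least $\epsilon$ is matched, and matching moves endpoints by at most $\delta$, hence the matched bar in $B(K)$ has length at least $\epsilon - 2\delta > 0$. Therefore
$$b_\epsilon(H) \leq b_{\epsilon - 2\delta}(K) < \infty,$$
which is the desired finiteness. The main obstacle I expect is the mild extension of Proposition \ref{propostiondistances} to possibly degenerate $H$; once that is in hand, the remaining counting is a direct consequence of the isometry theorem.
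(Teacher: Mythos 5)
Your proposal is correct and follows essentially the same route as the paper: approximate $H$ in the $C^0$-norm on $M_{r_{\max}}$ by a non-degenerate Hamiltonian whose chain complex (hence barcode) is finite, and transfer finiteness of the $\epsilon$-long bars via Proposition \ref{propostiondistances} and the isometry theorem. The only cosmetic difference is that the paper packages the final step through Remark \ref{remarkmoderatebarcodes} (moderate modules lying in the completion of finite-barcode ones), whereas you spell out the bottleneck-matching count $b_\epsilon(H) \leq b_{\epsilon-2\delta}(K)$ directly; both handle the possibly degenerate $H$ in the same spirit.
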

\begin{proof}
   Since for any non-degenerate Hamiltonian $\Tilde{H}$, the persistence module $V_s =HW^s(\Tilde{H})$  is finite barcode upper semi-continuous persistence modules bounded from below, then from propositions \ref{propostiondistances} and \ref{remarkmoderatebarcodes}, for a semi-admissible Hamiltonian $H$, the persistence module $V_s=HW^s(H)$ is moderate. 
\end{proof} 
\section{Barcode entropy} In this section, we recall the definition of wrapped Floer homology \emph{barcode entropy}, and present two equivalent definitions for it. 
\subsection{Wrapped Floer homology barcode entropy} Here we recall the definition of the main character of this paper. We point out that definition presented below is slightly different from the one in the introduction, but we will discuss why they are equivalent.

For a Liouville domain $(M,\lambda)$ and two exact asymptotically conical Lagrangians $L_0$ and $L_1$, we can define the wrapped Floer homology barcode entropy in the following way:
\begin{definition}
    The wrapped Floer homology \textbf{barcode entropy} of $M$ is defined by 
    $$\hbar^{HW}(M,L_0 \rightarrow L_1) =\lim_{\epsilon \rightarrow 0} \limsup_{t \rightarrow \infty} \frac{\log^{+}b_{\epsilon}(tru(M,L_0 \rightarrow L_1,t))}{t},$$
\end{definition}
The definition above is in the same fashion as the definition of symplectic homology barcode entropy, first introduced in \cite{fender2023barcode}. Denoting by $\text{\textcrb}_{\epsilon}(M,L_0 \rightarrow L_1,t)$ the number of bar with length greater or equal to $\epsilon$ and left end point at most $t$ in the barcode $B(M,L_0 \rightarrow L_1)$, then from Remark \ref{bepsilonversusthetruncation}, we have 
$$\text{\textcrb}_{\epsilon}(M,L_0 \rightarrow L_1,t-\epsilon) \leq b_{\epsilon}(tru(M,L_0 \rightarrow L_1,t))\leq \text{\textcrb}_{\epsilon}(M,L_0 \rightarrow L_1,t),$$
and we conclude that the wrapped Floer homology barcode entropy could have been defined using $\text{\textcrb}_{\epsilon}(M, L_0 \rightarrow L_1,t)$ instead of the $b_{\epsilon}(tru(M, L_0 \rightarrow L_1,t))$ (as we did in the introduction for simplicity), aligning with the definition in \cite{ginzburg2022barcode}. The reason why we prefer the definition incorporating the truncation will be clear on the proof of theorem A. 

Again, we will drop the Lagrangians from the notation when they are well understood. 
\subsection{Equivalent definitions for barcode entropy}

In this subsection we discuss two equivalent definitions for the wrapped Floer homology barcode entropy. The second one will be useful on the proof of theorem \hyperref[theoremA]{A}. Both equivalent definitions are presented here on the spirit of ideas in 
\cite{fender2023barcode}.

In what follows, by $(M,\lambda, L_0 \rightarrow L_1)$, we mean a Liouville domain $(M,\lambda)$ with two exact asymptotically conical Lagrangians $L_0$ and $L_1$ satisfying $L_0 \pitchfork L_1$. Consider a semi-admissible Hamiltonian $H$ in $(M,\lambda, L_0 \rightarrow L_1)$, and let $\epsilon >0$ smaller than $\mathcal{S}_{min}(H) = A_H(\mathcal{S}_{min})$, where $\mathcal{S}_{min}$ is the smallest length of a Reeb chord from $\Lambda_0$ to $\Lambda_1$, i.e., $\mathcal{S}_{min} = \inf\mathcal{S}_{\alpha}(\Lambda_0 \rightarrow \Lambda_1)$. For any $t>\epsilon$, the exact sequence 

$$0\rightarrow CW^{<\epsilon}(H,J) \xrightarrow{i} CW^{<t}(H,J) \xrightarrow{\pi} CW^{[\epsilon,t)}(H,J)\rightarrow 0, $$
induces the exact triangle
\begin{center}
\begin{tikzcd}[column sep=small]
& HW^{\epsilon}(H) \arrow[dr,"i_*"] & \\
  HW^{[\epsilon,t)}(H)  \arrow[ur,"\delta"]   &                         & \arrow[ll,"\pi_*"] HW^{<t}(H),
\end{tikzcd}  
\end{center}
and by the choice of $H$ and $\epsilon$, it easy to see that

\begin{center}
\begin{tikzcd}[column sep=small]
& HL(L_0,L_1) \arrow[dr,"i_*"] & \\
  HW^{[\mathcal{S}_{min}(H),t)}(H)  \arrow[ur,"\delta"]   &                         & \arrow[ll,"\pi_*"] HW^{<t}(H).
\end{tikzcd}  
\end{center}

For any other semi-admissible Hamiltonian $K$ with $H \leq K$, the continuation maps induce a map from the long exact sequence for $H$ to the long exact sequence for $K$. Since by \eqref{A_H(t)convergetot} $\mathcal{S}_{min}(sH) \rightarrow \mathcal{S}_{min}$ as $s\rightarrow \infty$, it is easy to see that by taking the limit on each term, we obtain the exact triangle

\begin{center}
\begin{tikzcd}[column sep=small]
& HL(L_0,L_1) \arrow[dr,"i_*"] & \\
  HW^{[\mathcal{S}_{min},t)}(M)  \arrow[ur,"\delta"]   &                         & \arrow[ll,"\pi_*"] HW^{<t}(M).
\end{tikzcd}  
\end{center}

We define $HW_+^a(M,L_0 \rightarrow L_1) := HW^{[\mathcal{S}_{min},a)}(M,L_0 \rightarrow L_1)$ and consider the persistence module $V_a= HW^a_+(M,L_0 \rightarrow L_1)$, for $a\geq \mathcal{S}_{min}(H)$, and $V_a = 0$ otherwise, with structure maps the "inclusion" maps. The barcode of this persistence module is represented by $B_+(M,L_0 \rightarrow L_1)$, and its barcode entropy, to be refereed as the positive wrapped Floer homology barcode entropy of $(M,\lambda,L_0 \rightarrow L_1)$ is denoted by $\hbar^{HW^+}(H,L_0 \rightarrow L_1)$.

Notice that the family $W_a =  HF(L_0,L_1)$ for $a \geq 0$, and $W_a = 0$ otherwise, is a persistence module where the structure maps $W_a\rightarrow W_{a'}$, for $a\leq a'$, are trivial if $a<0$ and $Id$ otherwise.   

\begin{proposition} \label{proppositiventropyvsentropy}
The positive wrapped Floer homology barcode entropy of $(M,\lambda,L_0 \rightarrow L_1)$ coincide with the wrapped Floer homology barcode entropy. i.e.,
    \begin{equation} \label{entropyvspositiveentropy}
        \hbar^{HW^+}(M,L_0 \rightarrow L_1) = \hbar^{HW}(M,L_0 \rightarrow L_1).
    \end{equation}
\end{proposition}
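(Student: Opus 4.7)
The approach is to show that the two persistence modules $V := \{HW^a(M, L_0 \rightarrow L_1)\}$ and $V^+ := \{HW^a_+(M, L_0 \rightarrow L_1)\}$ differ only by a contribution coming from the finite-dimensional Lagrangian Floer homology $HL(L_0, L_1)$, whose impact on the bar count grows at most linearly in the action, so it cannot affect the exponential growth rate.

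First, since $L_0 \pitchfork L_1$ with both Lagrangians exact and asymptotically conical, the intersection $L_0 \cap L_1$ is finite, so $d := \dim HL(L_0, L_1) < \infty$. The exact triangle established above gives, at each action value $a$, a long exact sequence involving $HL(L_0, L_1)$, $HW^a(M, L_0 \rightarrow L_1)$, and $HW^a_+(M, L_0 \rightarrow L_1)$; naturality in $a$ assembles these into a persistence morphism $\pi_* : V \to V^+$. By exactness, $\ker(\pi_*)_a$ is a quotient of $HL(L_0,L_1)$ (it equals $\mathrm{im}(i_*)$) and $\mathrm{coker}(\pi_*)_a$ is a subspace of $HL(L_0,L_1)$ (it equals $\mathrm{im}(\delta)$), so both have dimension at most $d$ for every $a$.

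Next I would establish the following combinatorial fact: any persistence module $P$ with $\dim P_a \leq d$ for all $a$ satisfies
$$b_\epsilon(tru(P, t)) \leq 2dt/\epsilon + d.$$
Indeed, partitioning $[0, t]$ into sub-intervals of length $\epsilon/2$, any bar of length $\geq \epsilon$ whose left endpoint lies in a given sub-interval is alive at its right endpoint, where the dimension bound permits at most $d$ such bars; there are $\lceil 2t/\epsilon \rceil$ sub-intervals. Applied to the kernel $K$ and cokernel $Q$ of $\pi_*$, this yields $\hbar(K) = \hbar(Q) = 0$ since $\log(2dt/\epsilon+d)/t \to 0$.

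The final step is the bar-count comparison $|b_\epsilon(tru(V, t)) - b_\epsilon(tru(V^+, t))| = O(t/\epsilon)$, extracted from the two short exact sequences $0 \to K \to V \to \mathrm{im}(\pi_*) \to 0$ and $0 \to \mathrm{im}(\pi_*) \to V^+ \to Q \to 0$. I would obtain this by invoking, for each short exact sequence, the fact that an extension of persistence modules can alter the bar count only via pairs of bars that interact across the extension, and the number of such interactions is controlled by the dimension of the small module; concretely, a direct argument via the structure theorem shows that in a short exact sequence $0 \to A \to B \to C \to 0$ with $\dim A_a \leq d$, one has $|b_\epsilon(tru(B,t))-b_\epsilon(tru(C,t))| \leq b_\epsilon(tru(A,t)) + d$. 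Combined with the vanishing of $\hbar(K)$ and $\hbar(Q)$, this gives $\hbar^{HW}_\epsilon(M,L_0\rightarrow L_1) = \hbar^{HW^+}_\epsilon(M,L_0\rightarrow L_1)$ for every $\epsilon>0$, and letting $\epsilon \to 0$ concludes the proof. The main obstacle is this last bar-count translation, since barcodes of extensions are not strictly additive; if a direct proof proves too delicate, a fallback would be to use the isometry theorem on suitable truncations to convert the bounded-dimension kernel and cokernel into a bottleneck-distance bound.
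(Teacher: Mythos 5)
Your overall strategy (compare $V=\{HW^a(M)\}$ and $V^+=\{HW^a_+(M)\}$ through the exact triangle, and argue that the $HL(L_0,L_1)$-contribution is too small to affect exponential growth) is the right idea and is close in spirit to the paper's argument. However, the step you yourself flag as the main obstacle contains a genuine gap: the counting lemma you invoke in the final step is false. For a short exact sequence $0\to A\to B\to C\to 0$ with $\dim A_a\leq d$, it is \emph{not} true that $|b_\epsilon(tru(B,t))-b_\epsilon(tru(C,t))|\leq b_\epsilon(tru(A,t))+d$. Take $\delta=0.6\,\epsilon$, points $a_k=2k\epsilon$ for $k=0,\dots,n-1$, and set $B=\bigoplus_k\mathbb{F}[a_k,a_k+2\delta)$, $A=\bigoplus_k\mathbb{F}[a_k+\delta,a_k+2\delta)\subset B$, so that $C=B/A=\bigoplus_k\mathbb{F}[a_k,a_k+\delta)$. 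Then $\dim A_a\leq 1$, $b_\epsilon(A)=b_\epsilon(C)=0$ (all bars have length $0.6\,\epsilon<\epsilon$), while $b_\epsilon(B)=n$ (bars of length $1.2\,\epsilon$), violating your inequality for $n\geq 2$. This is exactly the phenomenon that forces the $\epsilon$-doubling in the counting result the paper uses (Theorem \ref{theoremexactsequencebarcode}, from \cite{buhovsky2022coarse}): for an exact sequence $U\to V\to W$ one only gets $b_{2\epsilon}(V)\leq b_\epsilon(U)+b_\epsilon(W)$, and no comparison at a single fixed $\epsilon$ holds in general. Your fallback is also problematic: pointwise-small kernel and cokernel do not bound the interleaving/bottleneck distance between $V$ and $V^+$ — a class of $HL(L_0,L_1)$ surviving to $HW(M)$ contributes an infinite bar to $\ker\pi_*$, so $d_{bot}$ between the two modules can be infinite, and Remark \ref{remarktruboundedbytotal} then gives nothing useful.

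The repair is to replace your no-loss lemma by the coarse inequality with $\epsilon$-loss, which is harmless for entropy: applying it to the two exact sequences $HL\to HW\to HW_+$ and $HW\to HW_+\to HL$ (truncated at $t$), and using that the constant module $HL$ contributes at most $c=\dim HL(L_0,L_1)$ bars, one gets $b_{2\epsilon}(tru(B(M),t))\leq b_\epsilon(tru(B_+(M),t))+c$ and $b_{2\epsilon}(tru(B_+(M),t))\leq b_\epsilon(tru(B(M),t))+c$; taking exponential growth yields $\hbar^{HW^+}_{2\epsilon}\leq\hbar^{HW}_\epsilon\leq\hbar^{HW^+}_{\epsilon/2}$, and letting $\epsilon\to 0$ gives \eqref{entropyvspositiveentropy}. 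This is precisely the paper's proof; note it does not even need your (correct, but unnecessary) analysis of $\ker\pi_*$ and $\mathrm{coker}\,\pi_*$ or the linear bound $b_\epsilon(tru(P,t))\leq 2dt/\epsilon+d$ for pointwise-bounded modules.
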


For the proof of Proposition \ref{proppositiventropyvsentropy}, we will make use of the following theorem, which the proof can be found in \cite{buhovsky2022coarse}.
\begin{theorem}[\cite{buhovsky2022coarse}, Theorem 3.1] \label{theoremexactsequencebarcode}
    Let $U \rightarrow V \rightarrow W$ be an exact sequence of moderate persistence modules. Then for every $\epsilon >0$, the following inequality holds: 
    $$b_{2\epsilon}(B(V)) \leq b_{\epsilon}(B(U)) + b_{\epsilon}(B(W)).$$
\end{theorem}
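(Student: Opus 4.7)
The plan is to combine the moderate hypothesis with an explicit matching argument, using the structure theorem for persistence modules to upgrade an element-level matching to an injection on bars. First, I would reduce to the case of finite barcodes: since $U, V, W$ are moderate, each has only finitely many bars of length $\geq \epsilon/3$, and by Remark \ref{remarkmoderatebarcodes} the general case reduces by approximation in bottleneck distance to the case where $B(U), B(V), B(W)$ are all finite. This legitimizes the use of the structure theorem to choose concrete representatives for bars.

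Next, for each bar $[s, t) \in B(V)$ of length $t - s \geq 2\epsilon$, fix a representative $v \in V_s$ generating that bar, so that $\pi^V_{s, r}(v) \neq 0$ for all $r < t$. Consider $g(v) \in W_s$. In the first case, if $\pi^W_{s, s + \epsilon}(g(v)) \neq 0$, the element $g(v)$ is contained in a bar of $W$ of length $\geq \epsilon$, and I assign $[s, t)$ to this $W$-bar. In the second case, $g(v)$ is killed by the persistence map before time $s + \epsilon$, so $\pi^V_{s, s + \epsilon}(v) \in \ker g_{s + \epsilon} = \mathrm{im}\, f_{s + \epsilon}$ by exactness; lifting to $u \in U_{s + \epsilon}$, the survival of $f(u)$ in $V$ up to time $t$ forces $u$ to survive in $U$ for at least $t - s - \epsilon \geq \epsilon$ units, and I assign $[s, t)$ to the $U$-bar containing $u$.

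The crucial step is then to upgrade this element-wise assignment to an actual injection on bars. The plan is to argue this via rank functions $R_X(a, b) = \dim \mathrm{im}(X_a \to X_b)$ together with exact-sequence diagram chases: for any pair $a < b$ with $b - a$ slightly less than $2\epsilon$, a careful tracking of the maps on images and kernels induced by $f$ and $g$ bounds the number of $V$-bars crossing the window $[a, b]$ in terms of the corresponding quantities for $U$ and $W$ at $\epsilon$-shifted windows; summing over suitable windows using the bar decomposition then delivers $b_{2\epsilon}(V) \leq b_\epsilon(U) + b_\epsilon(W)$. An alternative route is to invoke an induced-matching theorem in the spirit of Bauer--Lesnick for the morphisms $f$ and $g$ separately: $g$ produces a partial matching from $B(V)$ to $B(W)$, and the long bars of $V$ that remain unmatched must, by exactness, be captured via $f$ by a partial matching into $B(U)$; the factor of two in $2\epsilon$ precisely absorbs the $\epsilon$-distortion each of $f$ and $g$ can contribute.

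I expect the main obstacle to be exactly this injectivity upgrade. The element-level construction immediately guarantees that every long bar of $V$ points to some long bar of $U$ or $W$, but it does not prevent two such pointers from colliding on the same target bar. Overcoming this requires either careful linear-algebraic bookkeeping in a basis adapted to the bar decomposition of $V$, with the block matrices of $f$ and $g$ tracked explicitly, or invoking a suitable induced-matching theorem as a black box. The buffer of size $\epsilon$ in the $2\epsilon$ threshold is exactly what is needed both for the case analysis above and for the cumulative $\epsilon$-slack in the Bauer--Lesnick-style matching approach.
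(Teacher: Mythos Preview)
The paper does not give its own proof of this statement; it quotes it as Theorem~3.1 of \cite{buhovsky2022coarse} and refers the reader there. So there is nothing in the paper to compare your argument against.

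On its own merits, your proposal is an honest outline that correctly isolates the difficulty but does not overcome it. The element-level dichotomy is fine: given a representative $v$ of a $V$-bar of length $\geq 2\epsilon$, either $g(v)$ persists for $\epsilon$ units in $W$, or a lift $u\in U_{s+\epsilon}$ of $\pi^V_{s,s+\epsilon}(v)$ persists for $\epsilon$ units in $U$, and in either case one can point to \emph{some} bar of length $\geq\epsilon$ in the target. But that assignment is highly non-canonical---the element in question is typically a combination of several interval generators and you are simply choosing one that survives---and precisely for this reason there is no mechanism preventing many long $V$-bars from all pointing to the same target bar. Your two proposed remedies, a rank-function count or an induced-matching theorem in the style of Bauer--Lesnick, are each substantial arguments in their own right, not routine bookkeeping; until one of them is actually carried out, what you have is a heuristic for why the inequality should hold rather than a proof. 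In short, you have correctly located the gap, but the proposal does not close it.
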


\begin{proof}[Proof of Proposition \ref{proppositiventropyvsentropy}]
Consider the exact sequences 
$$tru(HL(L_0,L_1),t) \xrightarrow{tru(i)} tru(HW(M),t) \xrightarrow{tru(\pi)} tru(HW^+(M),t),$$
and
$$tru(HW(M),t) \xrightarrow{tru(i)} tru(HW_+(M),t) \xrightarrow{tru(\pi)} tru(HL(L_0,L_1),t).$$
By denoting $c=\mathrm{dim}(HL(L_0,L_1))$ for simplicity, from Theorem \ref{theoremexactsequencebarcode} we get
$$b_{2\epsilon}(tru(B(M),t)) \leq b_{\epsilon}(tru(B_+(M),t)) + c,$$
and
$$b_{2\epsilon}(tru(B_{+}(H),t)) \leq b_{\epsilon}(tru(B(H),t)) + c.$$
Therefore
$$b_{2\epsilon}(tru(B(M),t)) - c \leq b_{\epsilon}(tru(B_+(M),t)) \leq b_{\epsilon/2}(tru(B(M),t)) + c,$$
and
$$b_{2\epsilon}(tru(B_+(M),t)) - c \leq b_{\epsilon}(tru(B(M),t)) \leq b_{\epsilon/2}(tru(B_+(M),t)) + c.$$
By taking the exponential growth with respect to $t$, we obtain  
$$\hbar_{2\epsilon}^{HW^+}(H,L_0\rightarrow L_1) \leq \hbar_{\epsilon}^{HW}(H,L_0\rightarrow L_1) \leq \hbar_{\epsilon/2}^{HW^+}(H,L_0 \rightarrow L_1).$$
Finally, taking $\epsilon \rightarrow 0$, we conclude \eqref{entropyvspositiveentropy}.
\end{proof}

Now we consider a Liouville domain $(M,\lambda)$, and two exact asymptotically conical Lagrangians $L_0$ and $L_1$. For any semi-admissible Hamiltonian $H$ in $\widehat{M}$ with $H(r,x)=rT-B$, for $r \geq r_{\max}(H)$, and any $s>0$, we can consider the barcode associated to the persistence module $t \rightarrow HW^t(sH)$. We use this family of barcodes to define the \emph{wrapped Floer homology barcode entropy of $H$} as follows.

\begin{definition}
For a semi-admissible Hamiltonian $H$, we define the wrapped Floer homology barcode entropy of $H$ to be
$$\hbar^{HW}(H,L_0 \rightarrow L_1) =\lim_{\epsilon \rightarrow 0} \limsup_{s \rightarrow \infty} \frac{\log^{+}b_{\epsilon}(tru(sH,sB))}{sT}.$$
\end{definition}

\begin{proposition} \label{sequecialbarcodeentropy}
    For a semi-admissible Hamiltonian $H$ in a Liouville domain $(M,\lambda)$, and two exact asymptotically conical Lagrangians $L_0$ and $L_1$,  we have
    $$ \hbar ^{HM}(H,L_0 \rightarrow L_1) = \hbar ^{HM}(M,L_0 \rightarrow L_1).$$
\end{proposition}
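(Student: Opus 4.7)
The plan is to use Theorem \ref{teohomolyofHtohomologyofH} to identify the persistence module $a \mapsto HW^a(sH)$ with a reparametrization of the truncation $tru(M, sT)$, and then apply the bilipschitz bounds of \eqref{bilipschitz} (whose constants do not depend on $s$) to sandwich $b_\epsilon(tru(sH, sB))$ between two quantities involving $b_\epsilon$ of $tru(M, sT)$.

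Concretely, I would proceed as follows. First, using Theorem \ref{teohomolyofHtohomologyofH}, the isomorphisms $\Phi^a_{sH}: HW^a(M, L_0 \rightarrow L_1) \cong HW^{A_{sH}(a)}(sH, L_0 \rightarrow L_1)$ for $a \leq sT$, together with their naturality with respect to the ``inclusion'' maps, identify the restriction of $a \mapsto HW^a(sH)$ to $[0, sB]$ with the action reparametrization via $A_{sH}$ of the restriction of $a \mapsto HW^a(M)$ to $[0, sT]$. Since the spectrum $\mathcal{S}(sH)$ lies in $[0, sB]$ (because $r_{\max}(sH) = r_{\max}(H)$ and $A_{sH}(sT) = sB$), the module $a \mapsto HW^a(sH)$ is constant for $a \geq sB$. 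It follows that bars of $B(sH)$ correspond bijectively to bars of $tru(B(M), sT)$: finite bars of $B(M)$ inside $[0, sT]$ reparametrize by $A_{sH}$, while bars of $B(M)$ still alive at $sT$ become infinite bars in $B(sH)$. Truncating the two sides at $sB$ and $sT$ respectively preserves this bijection.

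Next, I would invoke the bilipschitz inequality \eqref{bilipschitz} for $A_{sH}$, using the crucial fact that $r_{\max}(sH) = r_{\max}(H)$ is independent of $s$, to get
$$b - a \leq A_{sH}(b) - A_{sH}(a) \leq r_{\max}(H)(b - a), \quad 0 \leq a \leq b \leq sT.$$
Translating this length distortion into bar counts across the bijection yields the sandwich
$$b_\epsilon(tru(M, L_0\rightarrow L_1, sT)) \leq b_\epsilon(tru(sH, sB)) \leq b_{\epsilon/r_{\max}(H)}(tru(M, L_0\rightarrow L_1, sT)).$$
Taking $\log^+$, dividing by $sT$, and passing to $\limsup_{s \to \infty}$, and observing that $t = sT$ traces all of $(0, \infty)$ as $s$ does, the outer expressions become $\hbar^{HW}_\epsilon(M, L_0 \rightarrow L_1)$ and $\hbar^{HW}_{\epsilon/r_{\max}(H)}(M, L_0 \rightarrow L_1)$ respectively. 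Letting $\epsilon \to 0$, both bounds converge to $\hbar^{HW}(M, L_0 \rightarrow L_1)$ (since $r_{\max}(H)$ is a fixed constant), so the squeeze theorem delivers the desired equality.

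The main obstacle will be the precise bookkeeping in the first step, namely rigorously establishing the natural bijection between barcodes and their truncations, especially tracking how bars of $B(M)$ straddling the slope level $sT$ give rise to infinite bars in $B(sH)$ that become cut off at $sB$ under truncation. This will rely crucially on the naturality clause of Theorem \ref{teohomolyofHtohomologyofH} and on the fact that $HW^a(sH)$ stabilizes for $a \geq sB$. Once this structural identification is in place, the bilipschitz estimate and the subsequent limit computations are essentially formal.
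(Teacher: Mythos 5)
Your proposal is correct and follows essentially the same route as the paper: identify the truncated persistence module of $M$ with the $A_{sH}$-reparametrized truncated module of $sH$ via Theorem \ref{teohomolyofHtohomologyofH} (the truncation being needed precisely because the theorem says nothing for $a>sT$), then use the bilipschitz bound \eqref{bilipschitz} with the $s$-independent constant $r_{\max}(H)$ to sandwich the bar counts and pass to the limits. Your sandwich $b_\epsilon(tru(M,sT)) \leq b_\epsilon(tru(sH,sB)) \leq b_{\epsilon/r_{\max}(H)}(tru(M,sT))$ is just a rescaled form of the paper's inequality \eqref{inequalitytoprovetheorem}, so the remaining steps match.
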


The strategy of the proof is as follows. We start proving that for any semi-admissible Hamiltonian $H$, with $H(r,x) = rT -B$ for $r \geq r_{\max}$, and any $\epsilon > 0$, we have
\begin{equation} \label{inequalitytoprovetheorem}
    b_{r_{\max}\epsilon}(tru(H,B))\leq b_{\epsilon}(tru(M,T)) \leq b_{\epsilon}(tru(H,B)).
\end{equation}
Then, for the family of semi-admissible Hamiltonians $s\rightarrow sH$ we get
$$b_{r_{\max}\epsilon}(tru(sH,sB))\leq b_{\epsilon}(tru(M,sT))\leq b_{\epsilon}(tru(sH,sB)),$$
and therefore, we can conclude that the exponential growth of the barcodes coincide. 
\begin{proof}[Proof of Proposition \ref{sequecialbarcodeentropy}]
Let $H$ be any semi-admissible Hamiltonian with $H(r,x) = rT -B$, for $r\geq r_{\max}(H)$. Then from Theorem \ref{teohomolyofHtohomologyofH}, the persistence modules $tru(HW^a(M,L_0 \rightarrow L_1),T)$ and $tru(HW^{A_H(a)}(H,L_0 \rightarrow L_1),T) = tru(HW^a(H,L_0 \rightarrow L_1),T)^{A_H}$ are isomorphic (the truncation is essential here since Theorem \ref{teohomolyofHtohomologyofH} gives no information regarding the maps when $a > T$). Now since the function $A_H$ is bilipschitz, i.e.,
$$t'-t \leq A_h(t') - A_h(t) \leq r_{\max}(t'-t),$$
for any $0 \leq t \leq t'\leq T$, we conclude that \eqref{inequalitytoprovetheorem} holds for any $\epsilon>0$.

\end{proof}
\section{Independence of the fillings}

In this section, we discuss how the wrapped Floer homology barcode entropy of a Liouville domain depends only on the dynamics on the contact boundary; in other words, the barcode entropy is independent of the filling.\\

Throughout this section we fix $(\Sigma,\lambda,\Lambda_0 \rightarrow \Lambda_1)$, where $(\Sigma,\lambda)$ is a compact contact manifold, and $\Lambda_0$ and $\Lambda_1$ are Legendrian submanifolds. We say that $(M,\lambda,L_0 \rightarrow L_1)$ is a filling of $(\Sigma,\alpha,\Lambda_0 \rightarrow \Lambda_1)$, where $(M,\lambda)$ is a Liouville domain, and $L_0$ and $L_1$ are exact asymptotically conical Lagrangians, if $\partial M = \Sigma$, $\lambda|_{\Sigma} = \alpha$, and $\partial L_0 =\Lambda_0 $ and $\partial L_1=\Lambda_1$, with $L_0 \pitchfork L_1$. Consider $(M,\lambda,L_0 \rightarrow L_1)$ a filling of $(\Sigma, \alpha,\Lambda_0 \rightarrow \Lambda_1)$, and fix a collar neighborhood $Z$ of $\Sigma$ in $M$ such that 
$(Z,\lambda|_Z) \simeq (\Sigma \times [\frac{1}{2},1], r\alpha)$, where $r$ denotes the coordinate in $[\frac{1}{2},1]$. This way we see $\Sigma \times \{\frac{1}{2}\}$ as a submanifold of $M$. The next lemma is a well known fact about pseudo-holomorphic curves. We refer the reader to \cite{salamon1999lectures} and \cite{mcduff2012j} for more details.

\begin{lemma} \label{lemmaenergyofcurvesboundedbybelow}
    For any admissible almost complex structure $J$ in $\widehat{M}$,  there exists a constant $C=C(J)$, such that if $u$ is a $J$-holomorphic curve having boundary components in $\Sigma \times\{\frac{1}{2}\}$, $\Sigma \times \{1\}$, $\widehat{L}_0$ and $\widehat{L}_1$ then $E(u)\geq C$.
\end{lemma}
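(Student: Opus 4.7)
The plan is to reduce the statement to the standard monotonicity lemma for $J$-holomorphic curves with Lagrangian (or empty) boundary, by locating a point in the image of $u$ that lies at definite positive distance from both hypersurfaces $\Sigma\times\{\tfrac{1}{2}\}$ and $\Sigma\times\{1\}$. First, set $g_J := \omega(\cdot, J\cdot)$ and
\[
\delta := \mathrm{dist}_{g_J}\!\bigl(\Sigma\times\{\tfrac{1}{2}\},\,\Sigma\times\{1\}\bigr),
\]
which is strictly positive because the two sets are disjoint compact submanifolds of $\widehat{M}$ and $J$ is fixed.

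Since $u$ has boundary components on both hypersurfaces, the continuous function $z \mapsto \mathrm{dist}_{g_J}\!\bigl(u(z),\,\Sigma\times\{\tfrac{1}{2}\}\bigr)$ on the connected source $\Sigma_u$ takes the value $0$ at some boundary point and a value at least $\delta$ at another. The intermediate value theorem then yields $z_0 \in \Sigma_u$ at which this function equals $\delta/4$; set $q := u(z_0)$. Then $q$ also has $g_J$-distance at least $3\delta/4$ from $\Sigma\times\{1\}$. Moreover, if $z_0 \in \partial\Sigma_u$, the boundary conditions on $u$ force $q \in \widehat L_0 \cup \widehat L_1$, because $q$ lies on neither hypersurface.

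Next, I would invoke the standard monotonicity lemma for $J$-holomorphic curves with boundary on Lagrangian (or totally real) submanifolds — see, e.g., \cite{mcduff2012j} — in the open ball $B := B_{g_J}(q,\delta/4)$. By construction $B$ is disjoint from both hypersurfaces, so any part of $\partial\Sigma_u$ landing in $B$ is mapped into the Lagrangians $\widehat L_0\cup\widehat L_1$. Also $u(\Sigma_u)\not\subset B$, since $u(\Sigma_u)$ contains points of $\Sigma\times\{\tfrac{1}{2}\}$, which lie outside $B$. Monotonicity then furnishes a constant $c_0 = c_0(J) > 0$ with $\mathrm{Area}_{g_J}(u\cap B) \geq c_0\,(\delta/4)^2$; combined with the identity $E(u) = \int u^*\omega = \mathrm{Area}_{g_J}(u)$ for $J$-holomorphic curves, this gives $E(u) \geq C(J) := c_0\,(\delta/4)^2$.

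The main obstacle is the mixed-boundary nature of the problem: the hypersurfaces $\Sigma\times\{\tfrac{1}{2}\}$ and $\Sigma\times\{1\}$ are coisotropic rather than totally real, so the classical monotonicity lemma cannot be invoked directly at a boundary point of $u$ mapped into one of them. The intermediate-value step circumvents this by exhibiting a point $q$ near which only Lagrangian (or no) boundary is locally relevant, so that the standard estimate applies as stated. A secondary, easily handled point is that the monotonicity constant $c_0$ should be uniform as $q$ varies over a neighborhood of the compact region $\Sigma\times[\tfrac{1}{2},1]$, which is routine since that neighborhood is compact and the Lagrangians $\widehat L_0, \widehat L_1$ are smooth and asymptotically conical there.
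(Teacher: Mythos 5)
Your argument is essentially the standard one, and it is sound in outline; note that the paper does not actually prove this lemma at all -- it is invoked as a well-known fact with references to \cite{salamon1999lectures} and \cite{mcduff2012j} -- so what you have written is a genuine proof where the paper offers only a citation. The reduction is exactly the expected one: cut off a ball around a point of the curve lying at definite distance from both hypersurfaces, note that inside such a ball only Lagrangian boundary (or no boundary) can occur, and quote monotonicity together with $E(u)=\int u^*\omega=\mathrm{Area}_{g_J}(u)$ and uniformity of the constants over the compact shell $\Sigma\times[\tfrac12,1]$.

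One step needs a small patch. When $z_0$ is an interior point of the domain, the ball $B=B_{g_J}(q,\delta/4)$ avoids the two hypersurfaces but may very well still contain boundary of $u$ lying on $\widehat L_0\cup\widehat L_1$; in that situation neither the purely interior monotonicity lemma (which forbids boundary in the ball) nor the Lagrangian-boundary version as usually stated (which places the center on the Lagrangian) applies verbatim with center $q$. The routine fix is a dichotomy at half the radius: either $u^{-1}(B_{g_J}(q,\delta/8))$ contains no boundary points mapped to the Lagrangians, in which case interior monotonicity in $B_{g_J}(q,\delta/8)$ gives $E(u)\geq c_0(\delta/8)^2$, or there is a boundary point $q'\in u(\partial\Sigma_u)\cap(\widehat L_0\cup\widehat L_1)\cap B_{g_J}(q,\delta/8)$, and then $B_{g_J}(q',\delta/8)\subset B$ still misses both hypersurfaces, so Lagrangian-boundary monotonicity centered at $q'\in\widehat L_0\cup\widehat L_1$ applies and gives the same bound with $\delta/4$ replaced by $\delta/8$. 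Two further points worth a sentence each: the monotonicity radius must be capped by the $r_0$ of the local geometry, so the final constant is $c_0\min(\delta/8,r_0)^2$; and if $\widehat L_0\cap\widehat L_1\neq\emptyset$ inside the region swept by the ball, the boundary condition is a union of two transversally intersecting Lagrangians rather than a single embedded one, so one should invoke the version of monotonicity that tolerates such corners (standard in Gromov--Floer compactness, and avoidable here by recentering at a point of a single Lagrangian away from the finitely many intersection points, or by shrinking the radius below their mutual distance within the compact shell). With these routine adjustments your proof is complete and consistent with the references the paper points to.
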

We point out that for a (semi-)admissible Hamiltonian $H$, if a Floer strip $u$ has $E(u)< C$ and connects two critical points of $H$ that are entirely contained in $\Sigma \times [1,\infty)$, then $u$ does not cross $\Sigma \times \{\frac{1}{2}\}$. Indeed, since in this case $H|_{\Sigma\times [\frac{1}{2},1]}$ is constant, then the Floer equation \eqref{Floer equation} reduces to classical $J$-holomorphic equation, and the energy of the Floer strip reduces to the energy of the $J$-holomorphic curve.

The next lemma is helpful to compare the barcodes of two different fillings. In what follows we denote by $B^C_+$ the bars in a barcode $B$ that have left end point positive and length at most C.

\begin{lemma}[\cite{fender2023barcode}, Lemma 4.8]  \label{lemmacomparingbarcode} 
    Let $(V_1,l_1)$, $(V_2,l_2)$ and $(W,l_W)$ be orthogonalizable $\mathbb{K}$-spaces. Consider $(C = V_1 \oplus W,l_1 \oplus l_W)$ (resp $(D = V_2 \oplus W,l_2 \oplus l_W)$) endowed with a linear map $\partial_C : C \rightarrow C$ (resp $\partial_D :D \rightarrow D$) such that $\partial_C^2 = 0$ (resp $\partial_D^2=0$) and $\partial_C$ decreases action, where 
    $$(l_i \oplus l_W)(v,w) = \max\{l_i(v),l_W(w)\}, \ \text{for} \  i=1,2.$$
    Assume that there exists constants $\eta>0$ and $E>0$ such that
    \begin{enumerate}
        \item $l_i(v_i) < E < l_W(w), \ \text{for all} \ v_i \in V_i, \ w \in W, \ \text{for} \ i=1,2,$
        \item $l_W(\pi_W(\partial_C w ) - \pi_W(\partial_Dw)) < l_W(w) - \eta, \ \text{for all} \ w\in W.$ ($W$ is viewed as a subspace of $V_i \oplus W$, and $\pi_W$ means the natural projection to $W$ from $V_i \oplus W$.)
    \end{enumerate}
    Denote by $\mathbb{B}_C = \{x_1^C, ... , x_m^C, y_1^C,...,y^C_{n_1},z^C_1,...,z^C_{n_1}\}$ the orthogonal basis of $C$ such that $\partial_C x_i^C =0$, $\partial_C z_j^C = y
        _j^C$. Let $C_E^{<\eta}$ be the multi-set of intervals $(l_C(y_j^C), l_C(z_j^C)]$ such that $l_C(z_j^C) - l_C(y_j^C) < \eta$, and $E<l_C(y_j^C)$. Define $D_E^{<\eta}$ similarly. Then $C_E^{<\eta} = D_E^{<\eta}$.
\end{lemma}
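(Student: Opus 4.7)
The approach is to pass from $C$ and $D$ to their common quotient $W$, where the two differentials live on the same filtered vector space and differ by a perturbation controlled by $\eta$, then use this filtration gap to match short bars exactly.

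Hypothesis (1) forces $C^{\leq E}=V_1$ and $D^{\leq E}=V_2$; since $\partial_C$ and $\partial_D$ decrease action, $V_1\subseteq C$ and $V_2\subseteq D$ are filtered subcomplexes, and the quotients $C/V_1$ and $D/V_2$ are naturally identified with $W$ as filtered vector spaces. They carry induced differentials $\bar{\partial}_C:=\pi_W\circ\partial_C|_W$ and $\bar{\partial}_D:=\pi_W\circ\partial_D|_W$. Writing $\partial_C$ in block-triangular form on $V_1\oplus W$ and analyzing the filtered short exact sequence $0\to V_1\to C\to W\to 0$ via the standard mapping cone/Barannikov machinery, one sees that the $V_1$-block only contributes bars with left endpoint $\leq E$, while the off-diagonal block can only merge infinite bars into finite bars crossing the threshold (left endpoint $\leq E$, right endpoint $>E$). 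Thus finite bars of $(C,\partial_C)$ with left endpoint $>E$ correspond bijectively, with identical endpoints, to finite bars of $(W,\bar{\partial}_C)$, and the analogous correspondence holds for $D$. The problem reduces to showing that $(W,\bar{\partial}_C)$ and $(W,\bar{\partial}_D)$ share the same multiset of finite bars of length strictly less than $\eta$.

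Setting $\phi:=\bar{\partial}_D-\bar{\partial}_C$, hypothesis (2) reads $l_W(\phi(w))<l_W(w)-\eta$. Fix an orthogonal Barannikov basis $\{x_i^C,y_j^C,z_j^C\}$ of $(W,\bar{\partial}_C)$ and let $J$ index the short bars. For $j\in J$, so that $l_W(z_j^C)-l_W(y_j^C)<\eta$,
\[
\bar{\partial}_D z_j^C=y_j^C+\phi(z_j^C),\qquad l_W(\phi(z_j^C))<l_W(z_j^C)-\eta<l_W(y_j^C),
\]
which shows that $y_j^D:=\bar{\partial}_D z_j^C$ is a $\bar{\partial}_D$-cycle of filtration exactly $l_W(y_j^C)$, killed by $z_j^D:=z_j^C$ at filtration $l_W(z_j^C)$. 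Each short bar of $(W,\bar{\partial}_C)$ thereby produces a candidate short bar of $(W,\bar{\partial}_D)$ with identical endpoints.

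The final step is to promote this local matching into a bijection of barcodes. We complete $\{(y_j^D,z_j^D)\}_{j\in J}$ to an orthogonal Barannikov basis of $(W,\bar{\partial}_D)$ by correcting the long-bar generators $z_k^C$ (with $k\notin J$) and the infinite-bar generators $x_i^C$ with terms of filtration strictly below the original filtration minus $\eta$; such corrections exist precisely because $\phi$ drops filtration by more than $\eta$, and they do not disturb orthogonality since they live strictly below the elements they correct. Running the symmetric construction starting from a Barannikov basis for $(W,\bar{\partial}_D)$ yields the reverse matching, so $C_E^{<\eta}=D_E^{<\eta}$. The main obstacle is this final correction step: one must solve a graded cohomological equation in a controlled way for the long and infinite bars—this is what allows infinite bars to merge into long finite bars, and vice versa, without affecting the multiset of short bars. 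The short bars themselves are preserved verbatim because the $\eta$-gap immediately dominates the perturbation in a single step, making short Barannikov pairs rigid under the passage from $\bar{\partial}_C$ to $\bar{\partial}_D$.
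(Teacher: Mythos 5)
The paper itself does not prove this statement; it imports it verbatim from \cite{fender2023barcode} (Lemma 4.8), where the proof is a careful Gram--Schmidt/singular-value-decomposition comparison for orthogonalizable non-Archimedean spaces. So your argument has to stand on its own. Your reduction to the common quotient is a nice move and is essentially correct: because of the $E$-gap, \emph{every} linear map $W\rightarrow V_i$ is filtration-nonincreasing, so the off-diagonal block of $\partial_C$ can be changed within its chain-homotopy class by filtered isomorphisms, and what cannot be removed only merges bars across the threshold $E$; hence finite bars of $C$ with left endpoint $>E$ agree with finite bars of $(W,\bar{\partial}_C)$. Likewise, the observation that for a short pair $(y_j^C,z_j^C)$ the element $\bar{\partial}_D z_j^C=y_j^C+\phi(z_j^C)$ has filtration exactly $l_W(y_j^C)$ is correct.

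The genuine gap is the completion step, which is exactly where the content of the lemma sits. It is not true that the candidate short pairs can be extended to an SVD of $\bar{\partial}_D$ by correcting the long-bar and infinite-bar generators ``with terms of filtration strictly below the original filtration minus $\eta$'', nor that such corrections preserve orthogonality. Long bars are not rigid under perturbations dropping filtration by more than $\eta$: take $W$ with orthogonal generators $a_1,a_2,b_1,b_2$ of filtrations $10,\,10.2,\,20,\,20.1$, $\eta=1$, $\bar{\partial}_C b_1=a_1$, $\bar{\partial}_C b_2=a_2$, and $\bar{\partial}_D b_1=a_1+a_2$, $\bar{\partial}_D b_2=a_2$ (so $\phi(b_1)=a_2$ has filtration $10.2<l_W(b_1)-\eta$, all hypotheses hold, e.g.\ with $V_1=V_2=0$). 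The barcode of $\bar{\partial}_C$ is $\{(10,20],(10.2,20.1]\}$ while that of $\bar{\partial}_D$ is $\{(10,20.1],(10.2,20]\}$: the long bars recombine, and any SVD of $\bar{\partial}_D$ must contain an element of the form $b_1+b_2$ (plus lower terms), which is not a sub-$(l-\eta)$ correction of $b_1$ or of $b_2$; the bases your scheme produces are never SVD bases for $\bar{\partial}_D$ here. (This does not contradict the lemma --- neither side has short bars --- but it refutes the mechanism you rely on.) Consequently you have not shown that your candidate pairs actually occur as bars of $\bar{\partial}_D$, which is the whole assertion. To close the gap you must either run the full filtered SVD/Gram--Schmidt comparison allowing long and infinite generators to recombine arbitrarily (this is what the cited proof does), or find an argument that isolates the short bars from the long ones, for instance by working with the quotients $W^{\leq t}/W^{\leq t-\eta}$, on which $\bar{\partial}_C$ and $\bar{\partial}_D$ literally coincide, and proving that bars of length $<\eta$ are determined by that data; either route requires a substantive argument that the proposal currently replaces with an assertion.
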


\subsection{Proof of Theorem \ref{thm:independence}} For two fillings $(M,\lambda,L_0\rightarrow L_1)$ and $(N,\eta,K_0 \rightarrow K_1)$ of $(\Sigma, \alpha, \Lambda_0 \rightarrow \Lambda_1)$, and a (semi-)admissible Hamiltonian $H$ in $\Sigma \times [\frac{1}{2},\infty)$ such that $H|\equiv d$ on the collar neighborhood of $\Sigma$, we can naturally extend $H$ to $\widehat{M}$ and $\widehat{N}$ by setting $H|_M \equiv d$ and $H|_N \equiv d$. We keep denoting the extensions by $H$. Assuming that $H$ in non-degenerate (notice that $H$ is non-degenerate for $\widehat{M}$ if and only if it is non-degenerate for $\widehat{N}$ since the Lagrangian fillings are assumed to be transverse), and considering admissible almost complex structures $J^M$ and $J^N$ for $\widehat{M}$ and $\widehat{N}$ respectively such that $J^M \equiv J^N = J$ in $\Sigma \times [\frac{1}{2},\infty)$, we can associate the filtered chain complexes $CW(H,L_0 \rightarrow L_1)$ and $CW(H,K_0 \rightarrow K_1)$. By considering
$$W = \bigoplus_{x\in Crit^{(0,\infty)}(H)} x \cdot \mathbb{Z}_2,$$
we can write
\begin{equation} \label{CW(M)}
    CW(H,L_0 \rightarrow L_1) = V_M \oplus W,
\end{equation}
and 
\begin{equation} \label{CW(N)}
    CW(H,K_0 \rightarrow K_1) = V_N \oplus W,
\end{equation}
where $V_M = \bigoplus_{x\in L_0 \cap L_1} x \cdot \mathbb{Z}_2$ and $V_N = \bigoplus_{x\in K_0 \cap K_1} x \cdot \mathbb{Z}_2$. We notice that from Lemma \ref{lemmaenergyofcurvesboundedbybelow}, the conditions in Lemma \ref{lemmacomparingbarcode} hold for \eqref{CW(M)} and \eqref{CW(N)} with $0<E<S_{min}(H)$ and $\eta = C(J) = C$, so we conclude that $B_+^{<C}(H,L_0 \rightarrow L_1) = B^{<C}_+(H,K_0 \rightarrow K_1)$.

\begin{comment}
    If a Floer strip $u$ connects $(r_-,x_-)$ and $(r_+,x_+)$ and has $E(u)<C$, then $u$ do not cross $\Sigma \times \{\frac{1}{2}\}$, and is therefore contained in $\Sigma \times [\frac{1}{2},\infty]$. From the maximal (minimal) principal for elliptic PDE's (indeed the projection of the curve $u$ to the $r$ coordinate satisfies an elliptic PDE. See appendix of \cite{ritter2013topological}.), the image of the curve is contained in $\Sigma \times [r_-, r_+]$. With this observation, we can conclude that there exists a $C'>0$, that might be different than $C(j)>0$ so that the set of bars in $B(M,L_0 \rightarrow L_1)$ with length $\leq C$ with left end point $>0$, , which we denote by $B(M,L_0 \rightarrow L_1)$, is independent of $(M,L_0 \rightarrow L_1)$, i.e, if $(N,K_0 \rightarrow K_1)$ is another filling for $(\Sigma,\Lambda_0 \rightarrow \Lambda_1)$, then 
$B(M,L_0 \rightarrow L_1)_+^{<C} = B(N,K_0 \rightarrow k_1)_+^{<C}$ (this involves the Gram-Schmidt algorithm for orthogonalizable non- Archimedean vector spaces, and the singular value decomposition for a linear map between non-Archimedian vector spaces).
\\
\end{comment}

In what follows we fix a filling $(M,\lambda,L_0 \rightarrow L_1)$ of $(\Sigma,\alpha,\Lambda_0 \rightarrow \Lambda_1)$. By Remark \ref{bepsilonversusthetruncation}, we know that for any (semi-)admissible non-degenerate Hamiltonian $H$ in $\widehat{M}$ with $slope(H) = T$,  the bars in $B_{\epsilon}(tru(H,L_0 \rightarrow L_1,t+\epsilon))$, for any $\epsilon,t>0$, are equivalent to bars in $B(H,L_0 \rightarrow L_1)$ satisfying 
\begin{itemize}
    \item left end point is at most $t$,
    \item Length at least $\epsilon$.
\end{itemize}
We classify the bars in $B(H,L_0 \rightarrow L_1)$ in the following types:
\begin{itemize}
    \item[Type] I: Bars longer than $\epsilon$ with right end point greater than $t$,
    \item[Type] II: Bars longer than $\epsilon$ with left end point smaller than $t$,
    \item[Type] III: Bars shorter than $\epsilon$ with right end point greater than $t$,
    \item[Type] IV: Bars shorter than $\epsilon$ with right end point smaller than $t$.
\end{itemize}
Denoting by $b^{\epsilon}(X,t)$ the number of bars in $B(tru(H,L_0 \rightarrow L_1,t + \epsilon))$ of type $X$, we obtain
$$b^{\epsilon}(I,t) + 2b^{\epsilon}(II,t) + 2b^{\epsilon}(III,t) +b^{\epsilon}(IV,t) = |L_0 \cap L_1| + |Crit^{(0,t]}(H,L_0 \rightarrow L_1)|.$$
Since $b^{\epsilon}(III,T)$ and $b^{\epsilon}(IV,T)$ are independent of the filling for $\epsilon<C$, we conclude
$$n_{\epsilon}(t) =  b^{\epsilon}(I,t) + 2b^{\epsilon}(II,t) - |L_0 \cap L_1| = |Crit^{(0,t]}(H,L_0 \rightarrow L_1)| - 2b^{\epsilon}(III,t) - b^{\epsilon}(IV,t),$$
is independent of the filling. Furthermore, since 
$$b_{\epsilon}(tru(H,L_0 \rightarrow L_1, t)) = b^{\epsilon}(I,t) + b^{\epsilon}(II,t),$$
\begin{equation} \label{equationindependence}
    \frac{n_{\epsilon}(t) + |L_0 \cap L_1|}{2}\leq b_{\epsilon}(tru(H,L_0 \rightarrow L_1,t + \epsilon)) \leq n_{\epsilon}(t) + |L_0 \cap L_1|.
\end{equation}
Now consider a semi-admissible Hamiltonian $H$ with $H(r,x) =rT - B$ for $r\geq r_{\max}(H)$, and a sequence $s_n \rightarrow \infty$ such that $s_nT \notin \mathcal{S}(\alpha,L_0 \rightarrow L_1)$. Let $\widetilde{H}_n$ a semi-admissible non-degenerate perturbation of $s_nH$ (notice that it is not necessary to perturb $s_nH$ inside of the filling since the Lagrangian fillings intersect transverse) such that $r_{\max}(\widetilde{H}) = r_{\max}(H) =r_{\max}$ and $\widetilde{H} = H$ in $r \geq r_{\max}$, with $||\widetilde{H} - H||_{\infty,r_{\max}}<\frac{\epsilon}{2}$. Then by Proposition \ref{propostiondistances} and Remark \ref{remarktruboundedbytotal}, we can conclude that 
\begin{equation} \label{equationnottooshortbarsperturbation}
    b_{\epsilon}(B(tru(\widetilde{H}_n,s_nB)))\leq b_{2\epsilon}(B(tru(s_nH,s_nB)))\leq b_{4\epsilon}(B(tru(\widetilde{H}_n,s_nB))).
\end{equation}
From \eqref{equationnottooshortbarsperturbation} and \eqref{equationindependence}, we get
\begin{equation} \label{equationsanduiche}
    \frac{n_{\epsilon}(s_nB - \epsilon) + |L_0 \cap L_1|}{2} \leq b_{2\epsilon}(B(tru(s_nH,s_nB))) \leq n_{4\epsilon}(s_nB - 4\epsilon) + |L_0 \cap L_1|.
\end{equation}
Therefore, for $4\epsilon<C$, by taking the exponential growth in \ref{equationsanduiche} we obtain that $\hbar^{HW}_{2\epsilon}(H,L_0 \rightarrow L_1)$ is independent of the filling. Finally by taking $\epsilon \rightarrow 0^+$ and Proposition \ref{sequecialbarcodeentropy}, conclude \eqref{equationindependence}.
\hfill\qed

\section{Interplay between entropies}

In this section, we discuss the main ingredients essential for proving Theorem \hyperref[theoremA]{A}, along with its proof. Although the ideas of Lagrangian tomograph and Crofton's inequality are well known by experts, we introduce them here highlighting the necessary adaptations for their application within our specific framework. An essential step for the proof of Theorem \hyperref[theoremA]{A} is based on Yomdin's Theorem \cite{yomdin1987volume}, which implies, in particular, that the exponential growth of the volume of a submanifold under the iterates of a smooth map on a manifold is bounded from above by the topological entropy of the map. 
\subsection{Lagrangian tomograph}
In this subsection we discuss the idea of Lagrangian tomograph. We mostly follow \cite{bae2022comparison} for constructing a Lagrangian tomograph of a Lagrangian $L$ keeping its structure at infinity. Other types of Lagrangian tomograph may be found, for example in \cite{cineli2021topological,fender2023barcode}, and references therein.

Let $L \subset M$ be a exact asymptotically conical Lagrangian submanifold, with $\Lambda = \partial L \subset  \Sigma$, and a number $r_0 \geq 1$. We want to construct a family $(L_s)_{s\in B^d}$ of Lagrangian submanifolds of $\widehat{M}$ in a tubular neighborhood of $\widehat{L}$, smoothly parameterized by a ball $B_{r} = B_{r}(0) \subset \mathbb{R}^d$, for some large $d$, satisfying the following properties:

\begin{enumerate}
    \item Each $L_s$ in the family is exact and conical in $[r_0,\infty) \times \Sigma$,

    \item The Lagrangians $L_s$ are Hamiltonian isotopic to $\widehat{L}$ via some compactly supported Hamiltonian, and $d_{H}(L_s,L)$ can be taken as small as desired, i.e., $d_H(L_s,\widehat{L}) < \frac{\delta}{2}$, for all $s \in B_r$, for any chosen $\delta$,
    \item For any Lagrangian $K$ in $\widehat{M}$, the Lagrangians $L_s$ are transverse to $K$ for almost all $s$.
\end{enumerate}
Such family will be called an \emph{exact asymptotically conical Lagrangian tomograph} for $L$ conical in $[r_{0},\infty)\times \Sigma$. In what follows, we construct such a Lagrangian tomograph for an exact asymptotically conical Lagrangian $L\subset M$.

We first remark that from the Weinstein Lagrangian tubular neighborhood, it is enough to construct such family in a neighborhood of the zero section $\widehat{L}$ of $T^* \widehat{L}$.

Let $\{\psi_{i}: \widehat{L} \rightarrow \mathbb{R}\}^d_{i=1}$ be a family of smooth functions on $\widehat{L}$ such that $\{(d\psi_{i})_{x}\}^d_{i=1}$ span $T_{x} \widehat{L}$, for all $x \in \widehat{L}$, and consider $\mu: \widehat{L} \rightarrow \mathbb{R}$ an smooth function with $\mu|_{\widehat{L} \cap M_{r_{0}}} > 0$ (we abuse notation and think of $M_{r_0}$ as $M \cup [1,r_0) \times \Sigma$) and $\mu(r,x)|_{[r_{0},\infty) \times L}=0$, such that for $\eta_i = \mu \psi_i$, $\{(d\eta_{i})_{x}\}^d_{i=1}$ still spans $T_{x} \widehat{L}$ for all $x \in \widehat{L} \cap M_{r_{0}}$. Then consider the family 
$$f_{s}(x) = \sum^{d}_{i=1} s_{i} \eta_i(x),$$
with $s=(s_{1},...,s_{d}) \in \mathbb{R}$. For $r>0$ small enough, restricting the parameter $s$ to a ball $B^d = B_{r}(0) \subset \mathbb{R}^d$, the map $\Psi: B^d \times \widehat{L} \rightarrow T^{*} \widehat{L}$, with $\Psi(s,x) = d f_{s}(x)$ is a submersion on its image and satisfies the desired properties. 
\begin{remark}
    We point out that the family of Lagrangians $(L_s)_{s\in B}$ in the construction can be taken to be $C^{\infty}$ close to $\widehat{L}$ as long as we take the radius of $B=B_r(0)$ small enough.
\end{remark}
\begin{remark}
    Another important remark is that for any sequence of Lagrangians $(L_n)_{n \geq 1}$ in $\widehat{M}$, by choosing $\delta$ small enough, the Lagrangians $L_s$, conical in $[r_0,\infty)\times \Sigma$, are transverse to $L_n$ for all $n$ and almost all $s\in B$. Moreover, the functions
    $$N_n(s) = |L_s \cap L_n \cap M_{r_0}|,$$
    are well defined for all $n$, i.e., $N_n(s)$ is finite for almost all $s$ and integrable with respect to the Lebesgue measure $ds$ in $B$.
\end{remark}
\subsection{Crofton's inequality} Crofton's inequalities can be found in the literature in more generality than the version we present here. The usage of this technique goes back to Arnold in \cite{arnold1990dynamics} and \cite{arnol1990dynamics}. Even though the idea of the Crofton's inequality is well-known by experts, we present a proof of the version that is suitable for our purposes for the sake of completeness. For more details, see \cite{cineli2021topological} and references therein.

We begin by introducing a general set up where the inequality holds. Consider a compact manifold $B$ possible with boundary, a compact (also possible with boundary) submanifold $L \subset B$, and $ds$ a smooth measure in $B$. We denote by $\pi: E = B \times L \rightarrow B$ the projection on the first coordinate. Here, $B$ serves as the set of parameters, and we denote the points on it by $s$. We also consider a Riemannian manifold $M$, not necessarily compact, and a submersion,
$$\Psi : E \rightarrow M,$$
onto its image. Even though $M$ is not compact, the image $\Psi(E)$ is, since $E = B \times L$ is compact. Lets assume that $\Psi_{s} \coloneqq \Psi|_{\{s\}\times L}$ to be an embedding for all $s \in B$, and we denote $L_{s} = \Psi_{s}(L)$ the image of $L$ by $\Psi_{s}$ in $M$. Lastly, we consider $L' \subset M$ a closed submanifold of $M$ with 
$$\mathrm{codim} \ L' = \mathrm{dim} \ L.$$
Since $\Psi$ is a submersion, then in particular $\Psi$ is transverse to $L'$, therefore $\Psi_{s} \pitchfork L'$ for almost all $s \in B$ (after a $C^1$ perturbation of $\Psi$, we can assume $\Psi_{s} \pitchfork L'$ for all $s \in B$), and consider
$$N(s) \coloneqq |L_{s} \cap L'|.$$
\begin{lemma}[Crofton's inequality] In the above setting,
\begin{equation}\label{crofton}
    \int_{B}N(s)ds \leq const \cdot \mathrm{vol}(L'),
\end{equation} 
where the constant depends only on the measure $ds$, $\Psi$ and the metric in $M$, but not in $L'$.
\end{lemma}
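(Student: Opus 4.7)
The idea is to lift the count $N(s)$ to the incidence variety $Z:=\Psi^{-1}(L')\subset E$ and then push the integral forward to $L'$ via the coarea formula, converting $\int_B N(s)\,ds$ into an integral against $d\mathrm{vol}_{L'}$ whose integrand is controlled independently of $L'$.

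Since $\Psi$ is a submersion it is automatically transverse to the closed submanifold $L'$, so $Z$ is a smooth submanifold of $E$ of dimension $\dim E-\mathrm{codim}\,L'=\dim B$ (using the assumption $\mathrm{codim}\,L'=\dim L$). The projection $\pi|_Z:Z\to B$ is then a smooth map between equal-dimensional manifolds whose fiber over every $s$ with $\Psi_s\pitchfork L'$ is canonically identified with $L_s\cap L'$ and hence has cardinality $N(s)$. The area formula gives
\begin{equation*}
 \int_B N(s)\,ds=\int_Z |J(\pi|_Z)|(z)\,d\mathrm{vol}_Z(z),
\end{equation*}
where $d\mathrm{vol}_Z$ is induced from a fixed Riemannian metric on $E$, and $|J(\pi|_Z)|$ is the Jacobian with respect to $ds$.

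Next I would apply the coarea formula to the map $\Psi|_Z:Z\to L'$. Because $\Psi$ is a submersion, $d(\Psi|_Z)_z:T_zZ\to T_{\Psi(z)}L'$ is surjective, so $\Psi|_Z$ is itself a submersion onto $L'$; in the regime of interest $\dim Z=\dim B\geq\dim L'$. Writing $J^{\perp}_{\Psi|_Z}$ for the normal Jacobian, the coarea formula yields
\begin{equation*}
 \int_B N(s)\,ds=\int_{L'}\Bigl(\int_{(\Psi|_Z)^{-1}(y)}\frac{|J(\pi|_Z)|(z)}{J^{\perp}_{\Psi|_Z}(z)}\,d\mathrm{vol}_{\mathrm{fib}}(z)\Bigr)\,d\mathrm{vol}_{L'}(y).
\end{equation*}

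It then remains to bound the bracketed inner integral uniformly in $y\in L'$ by a constant independent of $L'$. At $z=(s,x)\in Z$ both Jacobians, and the fiber $(\Psi|_Z)^{-1}(y)$ itself, are determined by the tangent plane $T_zZ=(d\Psi_z)^{-1}(T_yL')$, so $L'$ enters only through the plane $T_yL'\in\mathrm{Gr}_{\dim L'}(T_yM)$. Since $\Psi(E)\subset M$ is compact, the associated Grassmannian bundle of $(\dim L')$-planes over $\Psi(E)$ is compact, and the integrand extends to a continuous function on the total space of that bundle, with $J^{\perp}_{\Psi|_Z}$ strictly positive by the submersion property. A standard compactness argument then produces a constant $C=C(\Psi,ds,g_M)$ that dominates the inner integral for every choice of $L'$, yielding \eqref{crofton}. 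The main obstacle is precisely this uniformity step: because $Z$ varies with $L'$, one must argue intrinsically through the Grassmannian bundle that the Jacobian ratio and the fiber volumes admit uniform bounds, rather than bounding them on each individual $Z$.
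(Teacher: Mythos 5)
Your proposal is correct and follows essentially the same route as the paper: both arguments pass through the incidence variety $Z=\Psi^{-1}(L')$ (the paper's $N$), identify its fibers over $B$ with the intersections counted by $N(s)$, and then push the integral forward to $L'$ by a Fubini/coarea argument, bounding fiber volumes by compactness of $E$. The only difference is bookkeeping: the paper avoids explicit Jacobians by choosing two auxiliary metrics on $E$ (a product metric so that $D\pi$ is volume-decreasing, and a metric adapted to $\Psi$ so the coarea factor is $1$) and absorbing the metric comparisons into the constant, whereas you keep one metric and control the Jacobian ratio uniformly via the compact Grassmannian bundle over $\Psi(E)$ --- a step you flag as the main obstacle but which goes through as you sketch it (note the fiber $(\Psi|_Z)^{-1}(y)=\Psi^{-1}(y)$ does not even depend on $L'$).
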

\begin{proof}
    Set $N = \Psi^{-1}(L')$. $N$ is a submanifold of $E$, and
    $$\mathrm{codim} \ N = \mathrm{codim} \ L' = \mathrm{dim} \ L,$$
    therefore, $\mathrm{dim} \ N = \mathrm{dim} \ B$. By definition,
    \begin{align} \label{equaN(s)}
        N(s) = |L_s \cap L| = |(\{s\}\times L) \cap N|.
    \end{align}
    Lets fix metrics on $B$ and $L$, and endow $E$ with the product metric. Denote by $ds$ the volume density associated to the product metric. Then from \eqref{equaN(s)}, we get
    $$\int_B N(s)ds = \int_N \pi^*ds \leq \text{vol}(N).$$
    where $\pi^*ds$ is the pull-back measure or pull-back density by restricted to $N \subset B\times L$. The last inequality follows from the fact that $D\pi$ is a orthogonal projection on each fiber, so it can only decrease the volume. Therefore, for an arbitrary metric on $E$, and an arbitrary metric $ds$ on $B$, we have
    \begin{equation} \label{vollower}
        \int_{B}N(s)ds \leq const \cdot \text{vol}(N),
    \end{equation}
    for some constant.

    Now we equip $E$ with a Riemannian metric such the restriction of $D\Psi$ to the normals of the fibers of $\Psi$, i.e., $\Psi^{-1}(y), \ y \in M$ are isometries. Then by Fubini's theorem, 
    \begin{align*}
        \text{vol}(N) = & \int_{L'}\text{vol}(\Psi^{-1}(y))dy|_{L'} \\
    & \leq \max_{y \in \Psi(E)} \text{vol}(\Psi^{-1}(E))\text{vol}(L'),
    \end{align*}
    where $dy|_{L'}$ stands for the restriction measure to $L'$. Thus, for any arbitrary metric on $E$, 
    \begin{equation} \label{volupper}
        \text{vol}(N) \leq const \cdot \text{vol}(L').
    \end{equation}
    Therefore, by combining the inequalities \eqref{vollower} and \eqref{volupper}, we obtain \eqref{crofton}.
    
\end{proof}
\subsection{Proof of Theorem \hyperref[theoremA]{A}}
    Let $H$ be a semi-admissible Hamiltonian with $H(r,x)=rT - B$, for $ r \geq r_{\max}(H) = r_{\max}$ and $T\notin S(\alpha,\Lambda_0 \rightarrow \Lambda_1)$, and consider a sequence $s_n \rightarrow \infty$ such that $s_n T \notin \mathcal{S}(\alpha,\Lambda_0 \rightarrow \Lambda_1)$.
    For a Lagrangian tomograph $(L_s)_{s \in B^d}$  of $L_1$ conical in $[r_{\max},\infty)\times \Sigma$, where $B^d = B^d(r) \subset \mathbb{R}^d$, take  
    $$N_n(s) = |\varphi^{s_n}_{H}(\widehat{L_0}) \cap L_s \cap M_{r_{\max}}|.$$
    From the crofton's inequalities follows that
    $$ \int_{B^{d}} N_{n}(s)ds \leq const \cdot \text{vol}(\varphi^{s_n}_{H}(\widehat{L_0}) \cap M_{r_{\max}}),$$
    where $ds$ is the Lebesgue measure in $B^d$. Assuming that the Lagrangians in the tomograph are close enough to $L_1$ in the Hofer norm, or more precisely $d_H(\widehat{L_1},L_s)<\frac{\epsilon}{2}$, then
    \begin{equation} \label{eq: comparing barcodes}
        b_{2\epsilon}(s_nH,L_0 \rightarrow L_1) \leq  b_{\epsilon}(s_nH,L_0 \rightarrow L_s) \leq  N_{n}(s),
    \end{equation}
    where the first and second inequalities follow from \eqref{barcodeofpertubation} and Remark \ref{intersectionoboundingnottooshortbars} respectively. 
    
    Combining
    $$ b_{2\epsilon}(tru(s_nH,L_0 \rightarrow L_1,s_nB)) \leq b_{2\epsilon}(s_nH,L_0 \rightarrow L_1),$$
    and \eqref{eq: comparing barcodes}, we get that
    $$b_{2\epsilon}(s_nH, L_0 \rightarrow L_1,s_nB) \leq const \cdot \mathrm{vol}(\varphi^{s_n}_H(\widehat{L_0}) \cap M_{r_{\max}}),$$
    and by comparing the exponential growth with respect to $n$, we obtain
\begin{align*}
   \limsup_{n\rightarrow \infty} \frac{\log^+(b_{2\epsilon}(tru(s_nH,L_0 \rightarrow L_1,s_nB)))}{s_n} 
   & \leq  \limsup_{n \rightarrow \infty} \frac{\log^+( \text{vol}(\varphi^{s_n}_{H}(\widehat{L_0}) \cap M_{r_{\max}}))}{s_n} \\
   & \leq \tope{\varphi_{H}|_{M_{r_{\max}}}} \\ 
   & = T\tope{\alpha},
\end{align*}
where the second inequality above follows from Yomdin's theorem. Therefore
$$\hbar^{HW}_{2\epsilon}(H,L_0 \rightarrow L_1) \leq \tope{\alpha}.$$
By sending $\epsilon \rightarrow 0$ and Theorem \ref{sequecialbarcodeentropy}, we conclude \eqref{equationentropies}.
\hfill\qed

\begin{comment}
    \section{Remarks and examples}

\begin{example}
 Any star shape domain $(D,dx\wedge dy, x\partial_{x} + y\partial_{y})$ is a Liouville domain with completion $\hat{D} = \mathbb{R}^{2n}$.  
\end{example}
\begin{example}
    More generally, the unit disk cotangent bundle $(D^{*}Q,dp\wedge dq,p\partial_{p})$ is a Liouville domain with completion $\hat{D^{*}= T^{*}Q}$ for any 
    closed manifold $Q$.
\end{example}
\end{comment}

\begin{comment}
    \begin{remark}
    On the setting of $HW$ we assume $(\lambda , L_{0}, \rightarrow L_{1})$ to be non-degenerate. Fixing a pair $(L_{0},L_{1})$, this condition can be archived generically in $\lambda$, paying the price of changing the dynamics [ref here]. On the other hand, for a given $\lambda$, non-degeneracy may not be the case for any pair of Lagrangians. For example, on the unit disk cotangent bundle of a sphere, the Reeb flow is periodic, so the intersection $\phi_{R_{\alpha}}(L) \cap L = L$ for any Lagrangian. 
\end{remark}
\end{comment}

\begin{comment}
    \begin{align}
    |B(tru(M,L_0 \rightarrow L_1,T+\epsilon))| & = |B_{\epsilon}(tru(M,L_0 \rightarrow L_1))| + |B_{\leq \epsilon}(tru(M,L_0 \rightarrow L_1, T+\epsilon))| \\
     & =  dimHF(L_0,L_1) + 2\#\{t\in \mathcal{S}(\alpha) \mid t<T\}
\end{align}
\end{comment}

\bibliographystyle{alpha}\bibliography{mybibliography}
\end{document}